\documentclass[10pt]{article}
 
\setlength{\textheight}{9.0in}     
\setlength{\textwidth}{6.2in}      
\setlength{\evensidemargin}{0.25in} 
\setlength{\oddsidemargin}{0.25in}  
\setlength{\headsep}{0pt}          
\setlength{\topmargin}{0in}        

\usepackage{amsmath}
\usepackage{amssymb}
\usepackage{amsthm}
\usepackage{latexsym}
\usepackage{rotate,epsfig}
\usepackage{lscape}
\usepackage{color}

\usepackage{mathptmx}    
\usepackage{latexsym}
\newtheorem{definition}{Definition}[section]
\newtheorem{proposition}{Proposition}[section]
\newtheorem{remark}{Remark}[section]
\newtheorem{theorem}{Theorem}[section]
\newtheorem{lemma}{Lemma}[section]
\newtheorem{example}{Example}[section]

\begin{document}

\title{\bf Weighted mean inactivity time function with applications\thanks{Research partially supported by 
Gonbad Kavous University, Gruppo Nazionale per il Calcolo Scientifico dell'Istituto Nazionale 
di Alta Matematica ``F.\ Severi'', and MIUR (PRIN 2017, Project 
``Stochastic Models for Complex Systems'', no.\ 2017JFFHSH).}
}


\author{
Antonio Di Crescenzo\footnote{
Dipartimento di Matematica, Universit\`a di Salerno, 
Via Giovanni Paolo II n.132, I-84084 Fisciano (SA), Italy, 
Email: adicrescenzo@unisa.it -- ORCID: 0000-0003-4751-7341}
\and
Abdolsaeed Toomaj\footnote{
Faculty of Basic Sciences and Engineering, Department of Mathematics and Statistics, 
Gonbad Kavous University, Gonbad Kavous, Iran, 
Email:  ab.toomaj@gonbad.ac.ir and ab.toomaj@gmail.com -- ORCID: 0000-0001-8813-473X}
}

\maketitle

\begin{abstract}
The concept of mean inactivity time plays a crucial role in reliability, risk theory and life testing. In this regard, we introduce a weighted mean inactivity time function by considering a non-negative weight function. Based on this function, we provide expressions for the variance of transformed random variable and the weighted generalized cumulative entropy. The latter concept is an important measure of uncertainty which is shift-dependent and is of interest in certain applied contexts, such as reliability or mathematical neurobiology. Moreover, based on the comparison of mean inactivity times of a certain function of two lifetime random variables, we introduce and study a new stochastic order in terms of the weighted mean inactivity time function. Several characterizations and preservation properties of the new order under shock models, random maxima and renewal theory are discussed.

\smallskip\noindent
{\em Keywords:} Generalized cumulative entropy; Lower record values; Mean inactivity time; Left spread function; Renewal theory;Variance 

\smallskip\noindent
MSC: 62N05; 60E05 
\end{abstract}

\section{Introduction and preliminaries}

Over the last decades, various concepts of stochastic orders have been defined and studied in the literature for the sake of their useful applications in reliability and economics and as mathematical tools for proving important results in applied probability. A comprehensive discussions on many stochastic comparisons between random variables are reported and investigated in details in the monograph given by Shaked and Shanthikumar \cite{Shaked}. The mean inactivity time (MIT) function, also known as the mean past lifetime and the mean waiting time, is a well-known reliability measure which has many applications in various disciplines such as reliability theory, survival analysis, risk theory and actuarial studies, among others.
\par
Let $X$ be a non-negative absolutely continuous random variable denoting the lifetime of a system or a component or a living organism with cumulative distribution function (CDF) $F(x)=\mathbb{P}(X\leq x)$ and probability density function (PDF) $f(x).$ Under the condition that the system has been found failed before time $t,$ the inactivity time is defined by $X_{[t]}=[t-X \, | \,X\leq t].$ In fact, $X_{[t]}$ denotes a random variable whose distribution is the same as the conditional distribution of $t-X$ given that $X\leq t.$ It is worth emphasizing that in many realistic situations, the random lifetime can refer to the past. For instance, consider a system whose state is observed only at certain preassigned inspection times. If at time $t,$ the system is inspected for the first time and it is found to be ``down", then the failure relies on the past (see e.g.\ Kayid and Ahmad \cite{KayidAhmad2004} and Di Crescenzo and Longobardi \cite{Di-Longobardi-2009}). Now, we recall the MIT function of $X$ which is defined by
\begin{equation}\label{mit}
\widetilde{\mu}(t)=\mathbb{E}[t-X|X\leq t]
=\frac{1}{{F}(t)}\int_{0}^{t}{F}(x)\,{\rm d}x,
\qquad t\in D,
\end{equation}
where $D:=\{t>0:{F}(t)>0\}$ and where  $\mathbb{E}[\,\cdot\,]$ means expectation.
An interpretation of the MIT function is as follows:
Assume that at time $t$ we perform an inspection to a device  and then we realize that
it has already failed. The MIT is thus useful  to infer on the actual time at which the failure of the device
occured. For further interpretations we refer the readers to e.g.\ Kayid and Izadkhah \cite{KayidIzadkhah2014}.
Assuming that $\widetilde{\mu}(t)$ is a differentiable function, from (\ref{mit}) we get
\begin{equation}\label{dmutilde}
 \widetilde{\mu}'(t)=1-\tau(t)\widetilde{\mu}(t),\qquad t\in D,
\end{equation}
where
\begin{equation}\label{rhr}
\tau(x)=\frac{f(x)}{F(x)},\qquad x\in D,
\end{equation}
denotes the reversed hazard rate function of $X.$ It is known that the reversed hazard rate and the MIT functions under certain assumptions define uniquely $F(t)$ as follows:
\begin{equation}\label{uniq}
F(t)=\exp\left\{-\int_{t}^{\infty}\tau(x)\,{\rm d}x\right\}
=\exp\left\{-\int_{t}^{\infty}\frac{1-\widetilde{\mu}'(x)}{\widetilde{\mu}(x)}\,{\rm d}x\right\},
\qquad t\in D.
\end{equation}
As pointed out by Finkelstein \cite{Finkelstein}, relation (\ref{uniq}) characterizes distribution functions if the following statements hold: 
\par
(i) $\widetilde{\mu}(0)=0$ and $\widetilde{\mu}(x)>0$ for all $x>0,$ 
\par
(ii) $\widetilde{\mu}'(x)<1,$ 
\par
(iii) $\int_{0}^{\infty}({1-\widetilde{\mu}'(x)})/{\widetilde{\mu}(x)}\,{\rm d}x=\infty$,
\par
(iv) $\int_{t}^{\infty}({1-\widetilde{\mu}'(x)})/{\widetilde{\mu}(x)}\,{\rm d}x<\infty,$ for all $t>0.$
\\
It follows from Eq.\ (\ref{uniq}) and characterization conditions for $\widetilde{\mu}(x)$ that there is no lifetime distribution with decreasing MIT function. Indeed, $\widetilde{\mu}'(x)<0,$ in this case and condition (iv) does not hold (see Finkelstein \cite{Finkelstein} for further details).
Kayid and Ahmad \cite{KayidAhmad2004} (see also Ahmad et al. \cite{Ahmad}) studied stochastic comparisons based on the MIT function under the reliability operations of convolution and mixture. Badia and Berrade \cite{Badia} gave an insight into properties of the MIT in mixtures of distributions. Some further properties of MIT function are widely studied and investigated in Finkelstein \cite{Finkelstein}, Goliforushani and Asadi \cite{Goli}, Kundu \emph{et al.} \cite{Kundu} and the references therein. Moreover, Izadkhah and Kayid \cite{IzadkhahKayid} used the harmonic mean average of the MIT function to propose a new stochastic order. 
Recently, Toomaj and Di Crescenzo \cite{ToomajAD2020}
showed that the variance of a random variable can be represented in terms of the square of the MIT function. 
Therefore, it is not  surprising that the MIT function has been the object of several investigations.
The aim of the present paper is to define a new version of MIT function, namely the weighted MIT function 
and to show some applications of such a measure. 
In analogy with (\ref{mit}), the weighted MIT function is defined through   the expectation 
$\mathbb{E}[\psi(t)-\psi(X)|X\leq t]$, where $\psi$ is a suitable cumulative weight function. 
By means of suitable choices of $\psi$ we show that the weighted MIT function can be related to 
various notions of reliability theory, as well as to several information measures of interest, 
such as the dynamic cumulative entropy, the past entropy, the varentropy and the weighted cumulative entropy. 
In other terms, the introduction of the weight function $\psi$ allows to construct a flexible tool which  
unifies various notions emerging in different applied fields. Moreover, the generalized MIT function 
can be used to extend the MIT stochastic order to the weighted version. Among the  main 
theoretical contributions of this paper, we refer to the introduction of the left spread function, 
which is analog to the right spread function (also known as excess wealth transform). 
The given function, which is of interest in risk management, is also extended to the weighted 
version. The latter one is shown to be intimately related to the variance of the weighted random 
variable and to the weighted generalized cumulative entropy. 
\par
Therefore, the rest of this paper is organized as follows:
In Section 2, some general properties of the weighted mean inactivity time function are discussed. 
We provide suitable connections with the ROC curve. We also analyze conditions 
expressed in terms of the reversed hazard rate function 
such that the weighted mean inactivity time function is constant, and also that it is increasing.
Section 3 is devoted to introduce the weighted mean inactivity time order. 
We also analyze its properties and connections to other well-known stochastic orders. 
In particular, we find additional conditions that allow to relate this order with the 
reversed hazard rate order. 
In Section 4, we use the weighted MIT function to obtain expressions and various results for the variance 
of transformed random variables as well as for the weighted generalized cumulative entropy. 
Furthermore, attention is  given to the determination of bounds and to the 
representation of measures in terms of expectations. 
Section 5 is finalized to investigate some connections of the previous results with the location-independent 
riskier order. In Section 6 we focus on applications of the previous results 
to reliability theory, with special attention to ordering results for 
a shock model governed by a non-homogeneous Poisson process, and 
for the maximum of independent and identically distributed (i.i.d.)\ random variables. 
Finally, we provide also applications to renewal theory
based on the weighted mean inactivity time order, 
with emphasis on the excess lifetime of a customary renewal process.
\par
For simplicity, in the rest of the paper, we write $g^n(x)$ instead of $[g(x)]^n$ for any given function $g$.
Moreover, $g'(x)$ denotes the derivative of $g(x)$. Note that the terms increasing and decreasing are
used in nonstrict sense. Throughout this paper, it is assumed that the expectations exist when they appears.
Furthermore, we denote by $\sigma^2(X)$ or $Var(X)$ the variance of $X$.
In addition, given two subsets of the real line $\cal U$ and $\cal V$, we say that  a non-negative function
$K(u,v)$ defined on $\cal U\times \cal V$ is {\em totally positive of order 2}, denoted as $TP_2$, if $K(u_1, v_1)K(u_2, v_2) \geq K(u_1, v_2)K(u_2, v_1)$ for all
$u_1\leq  u_2$ in $\cal U$ and $v_1 \leq v_2$ in $\cal V$ (see Karlin \cite{Karlin}). Finally,
``$\log$" denotes the natural logarithm, with the convention $0\log 0=0$.
\section{Weighted Mean Inactivity Time Function}

The aim of this section is to investigate on the weighted mean inactivity time function by applying \emph{the cumulative weight function}, say. For this aim, we consider a non-negative and differentiable function $\phi(x)$ in $[0,\infty).$ The cumulative weight function is defined as
\begin{equation}\label{weight function}
 \psi(x):=\int_{0}^{x}\phi(u)\,{\rm d}u,
 \qquad x\geq 0.
\end{equation}
This function plays a pivotal role in achieving our results. Specifically, given the random lifetime $X,$ we analyze various properties of the transformed random variable $\psi(X)$, where the latter may be viewed as an increasing time-change of $X.$ Let $\overline{F}(t)=1-F(t)$ be the survival function of $X$, and let
\begin{equation}
 \lambda(x)=\frac{f(x)}{\overline{F}(x)},
 \qquad  \forall\; x\geq 0: \overline{F}(x)>0
 \label{eq:failrate}
\end{equation}
denote the hazard rate function of $X$. For example, if we consider $\phi(x)=\lambda(x),$ then (\ref{weight function}) gives the cumulative hazard function of $X$. Due to (\ref{weight function}), it is clear that $\psi(x)$ is an increasing function of $x>0$ such that $\psi(0)=0,$ since $\psi'(x)=\phi(x)\geq 0$. Additionally, if the weight function $\phi(x)$ is increasing (decreasing) in $x>0,$ then $\psi(x)$ is convex (concave). This function was successfully applied by Toomaj and Di Crescenzo \cite{ToomajAD2020Mathematics}
to provide expressions for the variance of cumulative weighted random variable $\psi(X)$
by defining the {\em weighted mean residual life\/} (WMRL) function as
\begin{eqnarray}\label{vark}
 m_{\psi}(t)=\mathbb{E}[\psi(X)-\psi(t)|X>t]
 =\frac{1}{\overline{F}(t)}\int_{t}^{\infty}\phi(x)\overline{F}(x)\,{\rm d}x,
\end{eqnarray}
for all $t\geq 0$ such that $\overline{F}(t)>0$.
In analogy with (\ref{vark}), we introduce the {\em weighted mean inactivity time\/} (WMIT) function as
\begin{eqnarray}\label{wmit}
\widetilde{\mu}_{\psi}(t)=\widetilde{\mu}_{\psi(X)}(t)=\mathbb{E}[\psi(t)-\psi(X)|X\leq t]
 =\frac{1}{{F}(t)}\int_{0}^{t}\phi(x){F}(x)\,{\rm d}x,
\end{eqnarray}
for $t\in D$. In particular, when $\psi(t)=t,$ and hence $\phi(t)=1,$ then Eq.\ (\ref{wmit}) coincides with the MIT function (\ref{mit}),
and Eq.\ (\ref{vark}) gives the mean of the residual lifetime
\begin{equation}
  X_t:=[X-t \, | \,X>t], \qquad  t\in D.
 \label{eq:residuallifetime}
\end{equation}
In what follows, we implicitly assume that
\begin{equation}\label{EPSIX}
\mathbb{E}[\psi(X)]=\int_{0}^{\infty}\psi(x)f(x)\,{\rm d}x<\infty,
\end{equation}
to ensure the finiteness of $\widetilde{\mu}_{\psi}(t)$.
\begin{remark}\rm
Let $Y$ be a random variable with PDF $g(t)$ and CDF $G(t)$.
Let us consider the cumulative weight function $\psi(x)=G(x)$ and hence $\phi(x)=g(x)$.
Clearly, $\psi(X)=G(X)$ takes values in $[0,1]$, with distribution function
$$
 F_{\psi(X)}(u)=\mathbb P[G(X)\leq u]
 =F(G^{-1}(u)), \qquad 0\leq u \leq 1.
$$
This function is related to the Receiver Operating Characteristic (ROC) which was first developed during the Second World War by electrical engineers to analyze radar signals and to study the relation signal/noise, in particular in order to detect correctly enemy objects in battlefields. Recently, the aforementioned function is widely studied by  Cal\`{\i} and Longobardi \cite{CalMaria}. By interchanging the role of $F$ and $G$ in Section 3
of \cite{CalMaria}, the ROC curve has the following representation
$$
{\rm ROC}(u)= \overline F(G^{-1}(1-u)),\qquad 0\leq u \leq 1,
$$
so that ${\rm ROC}(0)=0$ and ${\rm ROC}(1)=1.$ Hence, the relevant
index given by the area under the ROC curve, i.e.\ AUC, is given by (see Section 5 of \cite{CalMaria})
$$
  {\rm AUC}=\int_0^1 {\rm ROC}(u)\,{\rm d}u
=\int_0^{\infty} g(x)\,\overline F(x)\,{\rm d}x
 = \mathbb{E}\left[\overline F(Y)\right],
$$
where we have set  $x=G^{-1}(1-u)$. Clearly, since $\psi(x)=G(x),$ and thus $\phi(x)=g(x),$ from (\ref{vark}) one has ${\rm AUC}=m_{\psi}(0)=m_{G}(0)$.
On the other hand, from (\ref{wmit}) one has also
$$
  {\rm AUC}=1-   \int_0^{\infty} g(x)\, F(x)\,{\rm d}x=1-\widetilde{\mu}_{\psi}(\infty)
  =1-\widetilde{\mu}_{G}(\infty).
$$
Moreover, in this case $\psi(\infty)=1,$ so that
applying Eqs.\ (\ref{wmit}) and (\ref{EPSIX}) one can  obtain another useful representation as follows:
$$
 {\rm AUC}=\mathbb{E}[G(X)].
$$
\end{remark}
\par
Henceforward, we investigate some properties of the WMIT function given in (\ref{wmit}).
To begin with, from Eqs.\ (\ref{weight function}) and (\ref{wmit}) the following lemma is easily obtained.
\begin{lemma}\label{lemmmk}
If $X$ is an absolutely continuous non-negative random variable, then for all $t\in D$
$$
 \widetilde{\mu}_\psi'(t)=\phi(t)-\tau(t)\,\widetilde{\mu}_\psi(t).
$$
\end{lemma}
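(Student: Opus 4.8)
The plan is to obtain the identity by differentiating the integral representation (\ref{wmit}) of $\widetilde{\mu}_\psi(t)$ directly, using the quotient rule together with the fundamental theorem of calculus. First I would set $A(t):=\int_0^t \phi(x)F(x)\,{\rm d}x$, so that $\widetilde{\mu}_\psi(t)=A(t)/F(t)$ for every $t\in D$; the denominator is strictly positive there by the very definition of $D$, and the quotient is finite thanks to the standing assumption (\ref{EPSIX}). Since $X$ is absolutely continuous, $F$ is differentiable with $F'(t)=f(t)$; and since $\phi$ is differentiable (hence continuous) on $[0,\infty)$ by the assumption preceding (\ref{weight function}), the integrand $\phi(x)F(x)$ is continuous, so $A$ is differentiable with $A'(t)=\phi(t)F(t)$.

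Next I would apply the quotient rule:
\[
 \widetilde{\mu}_\psi'(t)=\frac{A'(t)F(t)-A(t)f(t)}{F^2(t)}
 =\frac{\phi(t)F^2(t)-A(t)f(t)}{F^2(t)},\qquad t\in D.
\]
Splitting this fraction and recognising $A(t)/F(t)=\widetilde{\mu}_\psi(t)$ together with $f(t)/F(t)=\tau(t)$, the reversed hazard rate function defined in (\ref{rhr}), immediately gives $\widetilde{\mu}_\psi'(t)=\phi(t)-\tau(t)\,\widetilde{\mu}_\psi(t)$, which is the assertion.

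There is essentially no substantive obstacle here: the argument is a one-line differentiation, and the only points that deserve a word of justification are the differentiability of $F$ (ensured by the absolute continuity of $X$) and the finiteness and well-definedness of all quantities on $D$ (ensured by $F(t)>0$ on $D$ and by (\ref{EPSIX})). As a consistency check, note that taking $\phi\equiv 1$, i.e.\ $\psi(t)=t$, reduces the formula to the classical relation (\ref{dmutilde}) for the ordinary MIT function.
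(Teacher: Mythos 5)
Your proof is correct and follows exactly the route the paper intends: the paper gives no explicit proof, stating only that the lemma "is easily obtained" from (\ref{weight function}) and (\ref{wmit}), and your quotient-rule differentiation of $\widetilde{\mu}_\psi(t)=A(t)/F(t)$ is precisely that computation, with the appropriate justifications of differentiability and well-definedness on $D$.
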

This result allows to give the condition such that the WMIT function is constant.
\begin{proposition}\rm
Given a constant $c>0$, one has that
$$
 \widetilde{\mu}_\psi(t)=c \qquad 	\hbox{for all }t>0,
$$
if and only if
$$
 \phi(t) =c\, \tau(t)\qquad 	\hbox{for all }t>0.
$$
\end{proposition}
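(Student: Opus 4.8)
The plan is to exploit Lemma \ref{lemmmk}, which gives $\widetilde{\mu}_\psi'(t)=\phi(t)-\tau(t)\,\widetilde{\mu}_\psi(t)$ for all $t\in D$. This identity converts the statement into an elementary calculus argument, so the proof should be short.

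For the forward implication, suppose $\widetilde{\mu}_\psi(t)=c$ for all $t>0$. Then $\widetilde{\mu}_\psi$ is differentiable with $\widetilde{\mu}_\psi'(t)=0$, and substituting into Lemma \ref{lemmmk} yields $0=\phi(t)-\tau(t)\,c$, i.e.\ $\phi(t)=c\,\tau(t)$, on $D$; outside $D$ (where $F(t)=0$) both sides vanish in the relevant sense, so the identity holds for all $t>0$.

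For the converse, suppose $\phi(t)=c\,\tau(t)$ for all $t>0$. Plugging this into (\ref{wmit}) directly gives
$$
 \widetilde{\mu}_\psi(t)=\frac{1}{F(t)}\int_0^t \phi(x)F(x)\,{\rm d}x
 =\frac{c}{F(t)}\int_0^t \tau(x)F(x)\,{\rm d}x
 =\frac{c}{F(t)}\int_0^t f(x)\,{\rm d}x=c,
$$
using (\ref{rhr}) so that $\tau(x)F(x)=f(x)$. Alternatively, one may argue via Lemma \ref{lemmmk}: the hypothesis makes $h(t):=\widetilde{\mu}_\psi(t)-c$ satisfy the linear ODE $h'(t)=-\tau(t)h(t)$, and the boundary behaviour $\widetilde{\mu}_\psi(0^+)=0$ forces $h$ to vanish identically; I would prefer the direct computation since it avoids discussing the behaviour at the endpoint.

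I do not anticipate a genuine obstacle here; the only point requiring a little care is the domain issue, namely that $\widetilde{\mu}_\psi$ and Lemma \ref{lemmmk} are stated on $D=\{t>0:F(t)>0\}$ while the proposition quantifies over all $t>0$. Since $X$ is non-negative and absolutely continuous, $D$ is an interval of the form $(\ell,\infty)$ (or $(0,\infty)$) and on its complement $F$ and $f$ both vanish, so $\phi=c\tau$ holds trivially there; I would note this briefly so the equivalence is literally correct as stated.
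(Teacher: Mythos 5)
Your proof is correct and follows essentially the route the paper intends: the paper states the proposition as an immediate consequence of Lemma \ref{lemmmk} (giving no written proof), and your forward direction is exactly that substitution, while your converse via the identity $\tau(x)F(x)=f(x)$ in (\ref{wmit}) is the natural direct verification. Your remark on the domain $D$ versus ``all $t>0$'' is a reasonable (and harmless) clarification of a point the paper glosses over.
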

\par
It is evident from (\ref{wmit}) that for an absolutely continuous
non-negative random variable $X,$
the weighted mean inactivity time function for all $t\in D$ can be rewritten as
\begin{eqnarray}\label{wmit2}
\widetilde{\mu}_{\psi}(t)=\psi(t)-\mu_{\psi}(t),
\end{eqnarray}
where
$$
\mu_{\psi}(t)=\mathbb{E}[\psi(X)|X\leq t]=\frac{1}{F(t)}\int_{0}^{t}\psi(x)f(x)\,{\rm d}x,
\qquad t\in D,
$$
is termed as the weighted mean failure time of a system conditioned by a failure before time $t,$ also named `weighted mean past lifetime'. Clearly, the derivative of this function is given by
\begin{equation}\label{derivemu}
\mu'_{\psi}(t)=\tau(t)[\psi(t)-\mu_{\psi}(t)],
\qquad t\in D.
\end{equation}
By virtue of (\ref{uniq}) and using Lemma \ref{lemmmk}, the WMIT function under certain assumptions defines uniquely $F(t)$ as follows:
\begin{equation}\label{uniqweighted}
 F(t)=\exp\left\{-\int_{t}^{\infty}\tau(x)\,{\rm d}x\right\}
 =\exp\left\{-\int_{t}^{\infty}\frac{\phi(x)-\widetilde{\mu}_\psi'(x)}{\widetilde{\mu}_\psi(x)}\,{\rm d}x\right\},
 \qquad t\in D.
\end{equation}
Equation (\ref{uniqweighted}) characterizes the distribution function under the following statements:
\\
(i) $\widetilde{\mu}_\psi(0)=0$ and $\widetilde{\mu}_\psi(x)>0$ for all $x>0;$
\\
(ii) $\widetilde{\mu}_\psi'(x)<\phi(x);$
\\
(iii) $\int_{0}^{\infty}({\phi(x)-\widetilde{\mu}_\psi'(x)})/{\widetilde{\mu}_\psi(x)}\,{\rm d}x=\infty$, and
\\
(iv) $\int_{t}^{\infty}({\phi(x)-\widetilde{\mu}_\psi'(x)})/{\widetilde{\mu}_\psi(x)}\,{\rm d}x<\infty$ for all $t>0$.
\begin{remark}\rm
We remark that from (\ref{uniqweighted}) and the characterization conditions for $\widetilde{\mu}_\psi(x)$,
it follows that there is no lifetime distribution with decreasing WMIT function.
Indeed, recalling (\ref{uniqweighted}), if $\widetilde{\mu}_\psi'(x)<0$ and
\\
(a) if $C(t):=\int_{t}^{\infty}\phi(x){\rm d}x=\infty$ for some $t>0$, then  one has
$$
 \nonumber F(t) < \exp\left\{-\int_{t}^{\infty}\frac{\phi(x)}{{\widetilde{\mu}_\psi(x)}}\,{\rm d}x\right\}
 <\exp\left\{-\frac{C(t)}{\widetilde{\mu}_\psi(t)}\right\}=0,
$$
(b) if $C(t)=\int_{t}^{\infty}\phi(x){\rm d}x<\infty$ for all $t>0,$ then from condition (i) we have
\[
F(0)<\exp\left\{-\frac{C(0)}{\widetilde{\mu}_\psi(0)}\right\}=0.
\]
Hence, in both cases the condition leads to a contradiction, so that there exists no distribution with
decreasing WMIT function.
\end{remark}
It is worth to point out that in some situations the conditions (i)-(iv) may be not satisfied, as shown
in the following example.
\begin{example}
Let $X$ be an absolutely continuous non-negative random variable with PDF $f(x)$ and survival function $\overline{F}(x).$ With reference to (\ref{weight function}), we consider the weight function
$\phi(x)=\overline{F}(x)/F(x)$, also known as odds of survival (see, for instance,
Gupta and Peng \cite{GuptaPeng}), so that from (\ref{wmit}) we get
\begin{equation}\label{mttf}
\widetilde{\mu}_{\psi}(t)=\frac{1}{F(t)}\int_{0}^{t}\overline{F}(x)\,{\rm d}x,
\qquad t\in D.
\end{equation}
Clearly, if $\inf (D)=0$ and $0<f(0)<\infty$, then $\widetilde{\mu}_{\psi}(0)={1}/{f(0)}>0$.
In this case, condition (i) above does not hold and hence the distribution function can not be characterized. The function in the right-hand-side of (\ref{mttf}) is known as the mean time to failure of an item that is
subject to an \emph{age replacement} policy in which a unit is replaced $t$ hours after its installation
or at a failure, whichever occurs first (see Section 3.3 of Barlow and Proschan \cite{Barlow-Proschan} for details). From the latter reference, if $X$ has decreasing (increasing) failure rate, i.e.\ $X$ is DFR (IFR),
then the function  $\widetilde{\mu}_{\psi}(t)$ given in (\ref{mttf}) is increasing (decreasing) in $t$.
This conclusion can also be obtained from Part (i) of Theorem \ref{thm:philambda} by noting that, due to (\ref{eq:failrate}),
\[
\frac{\phi(t)}{\tau(t)}=\frac{1}{\lambda(t)},\qquad t>0,
\]
which is increasing (decreasing) when $X$ is DFR (IFR).
\end{example}
\par
The following result deals with the WMIT and MIT functions.
\begin{lemma}\label{lemmnbwue}
Let $X$ be an absolutely continuous non-negative random variable with weighted mean inactivity time function $\widetilde{\mu}_\psi(t)$ defined as in (\ref{wmit}). If $\psi(x)$ is convex (concave) on $[0,\infty),$ then
\begin{equation}\label{NBWUE}
\widetilde{\mu}_\psi(t)\geq(\leq) \; \psi(\widetilde{\mu}(t)) \qquad \hbox{for all $t\in D$.}
\end{equation}
\end{lemma}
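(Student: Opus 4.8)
The plan is to apply Jensen's inequality to the conditional distribution of $X$ given $X \le t$. The key observation is that the weighted mean inactivity time can be written as
$$
\widetilde{\mu}_\psi(t) = \mathbb{E}[\psi(t) - \psi(X) \mid X \le t] = \psi(t) - \mathbb{E}[\psi(X) \mid X \le t],
$$
while the ordinary MIT function satisfies $\widetilde{\mu}(t) = t - \mathbb{E}[X \mid X \le t]$, so that $\psi(\widetilde{\mu}(t)) = \psi\bigl(t - \mathbb{E}[X \mid X \le t]\bigr)$. The difficulty is that $\psi$ is applied to $t - \mathbb{E}[X\mid X\le t]$ rather than directly to a conditional expectation, so a naive appeal to Jensen on $\psi$ alone does not immediately close the gap; one must exploit convexity in the right variable.

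First I would introduce, for fixed $t \in D$, the auxiliary function $h(x) := \psi(t) - \psi(t - x)$ for $x \in [0, t]$, and note that $\widetilde{\mu}_\psi(t) = \mathbb{E}[h(t - X) \mid X \le t]$ while $\psi(\widetilde{\mu}(t)) = \psi\bigl(\mathbb{E}[t - X \mid X \le t]\bigr)$. Since $\widetilde{\mu}(t) = \mathbb{E}[t-X\mid X\le t]$, the claim reduces to comparing $\mathbb{E}[h(t-X)\mid X\le t]$ with $\psi(\mathbb{E}[t-X\mid X\le t])$. If $\psi$ is convex then $h$ is convex on $[0,t]$ (it is a reflection and vertical shift of $\psi$, both of which preserve convexity), and moreover $h(0) = 0 = \psi(0)$ and $h'(0^+) = \phi(t) \ge \phi(0) = \psi'(0^+)$ when $\phi$ is increasing; more directly, convexity of $\psi$ gives $\psi(t) - \psi(t-x) \ge \psi(x) - \psi(0) = \psi(x)$ for $0 \le x \le t$ by the standard fact that the increments of a convex function are nondecreasing, i.e. $h(x) \ge \psi(x)$ pointwise on $[0,t]$.

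The second and main step is then a two-fold application: by the pointwise bound $h(x) \ge \psi(x)$ we get
$$
\widetilde{\mu}_\psi(t) = \mathbb{E}[h(t-X)\mid X\le t] \ge \mathbb{E}[\psi(t-X)\mid X\le t],
$$
and by Jensen's inequality applied to the convex function $\psi$ and the conditional law of $t - X$ given $X \le t$ we get
$$
\mathbb{E}[\psi(t-X)\mid X\le t] \ge \psi\bigl(\mathbb{E}[t-X\mid X\le t]\bigr) = \psi(\widetilde{\mu}(t)).
$$
Chaining these two inequalities yields $\widetilde{\mu}_\psi(t) \ge \psi(\widetilde{\mu}(t))$ for all $t \in D$, which is (\ref{NBWUE}) in the convex case. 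The concave case is entirely symmetric: both the pointwise increment bound and Jensen's inequality reverse, giving $\widetilde{\mu}_\psi(t) \le \psi(\widetilde{\mu}(t))$.

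The step I expect to be the main obstacle is justifying the pointwise inequality $h(x) = \psi(t) - \psi(t-x) \ge \psi(x)$ cleanly and recognizing that it, rather than a single Jensen application, is what carries the argument — in fact $h(x) \ge \psi(x)$ is exactly the statement that the convex function $\psi$ has superadditive increments, since $\psi(t) - \psi(t-x) \ge \psi(x) - \psi(0)$ follows from $\psi(t-x) + \psi(x) \le \psi(t) + \psi(0)$, which is the midpoint/chord characterization of convexity applied to the pair of points $\{t-x, x\}$ versus $\{t, 0\}$ (note $t = (t-x) + x - 0$). Once this is in place, the rest is routine. One should also verify finiteness throughout, which is guaranteed by the standing assumption (\ref{EPSIX}) that $\mathbb{E}[\psi(X)] < \infty$.
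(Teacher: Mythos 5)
Your proposal is correct and follows essentially the same route as the paper: the pointwise bound $\psi(t)-\psi(t-x)\geq\psi(x)$ is precisely the superadditivity of $\psi$ (convexity plus $\psi(0)=0$) that the paper invokes, and the second step is the same application of Jensen's inequality to the conditional law of $t-X$ given $X\le t$. The auxiliary function $h$ is just a repackaging of that superadditivity step, so there is nothing substantively different to compare.
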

\begin{proof}
By assumption $\psi(x)$ is increasing convex (concave) on $[0,\infty)$, with $\psi(0)=0.$
Thus $\psi(x)$ is superadditive (subadditive), i.e.\ $\psi(z+y)\geq(\leq)\; \psi(z)+\psi(y)$,
for $z,y\geq0$. By substituting $z=x$ and $y=t-x$, with $0\leq x\leq t,$
we obtain $\psi(t)-\psi(x)\geq(\leq)\;\psi(t-x)$ for all $t\geq x\geq0.$
Hence, recalling (\ref{wmit}) and (\ref{mit}) we find that
\begin{eqnarray*}
\nonumber \widetilde{\mu}_\psi(t)&=&\mathbb{E}[\psi(t)-\psi(X)|X\leq t],\\
\nonumber  &\geq(\leq)&\mathbb{E}[\psi(t-X)|X\leq t],\\
  &\geq(\leq)&\psi(\mathbb{E}[t-X|X\leq t])=\psi(\widetilde{\mu}(t)),
  \qquad t\in D,\label{eq:mitphi}
\end{eqnarray*}
where the last inequality is obtained by using Jensen's inequality. This gives the desired result.
\end{proof}
Lemma \ref{lemmmk} and Lemma \ref{lemmnbwue} will be used to derive various
results presented in the sequel.
\begin{lemma}\label{lemmnpsi}
Let $X$ be an absolutely continuous non-negative random variable with
weighted mean inactivity time function $\widetilde{\mu}_\psi(t)$ defined as in (\ref{wmit}).
Assume that there exist non-negative
constants $m$ and $M$ such that $m\leq\phi(t)\leq M$ for all $t\geq 0.$ Then
\begin{equation}\label{wmu}
m\leq \frac{\widetilde{\mu}_\psi(t)}{\widetilde{\mu}(t)}\leq M
\qquad \hbox{for all $t\in D$.}
\end{equation}
\end{lemma}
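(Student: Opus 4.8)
The plan is to work directly from the integral representation (\ref{wmit}) of the weighted mean inactivity time function and the sandwich hypothesis $m\le\phi(t)\le M$. First I would fix $t\in D$, so that $F(t)>0$ and hence $\int_{0}^{t}F(x)\,{\rm d}x>0$; this guarantees $\widetilde{\mu}(t)>0$, which is what makes the ratio in (\ref{wmu}) well defined. Since $F(x)\ge 0$ on $[0,t]$, multiplying the inequalities $m\le\phi(x)\le M$ by $F(x)$ and integrating over $[0,t]$ yields
$$
 m\int_{0}^{t}F(x)\,{\rm d}x\;\le\;\int_{0}^{t}\phi(x)F(x)\,{\rm d}x\;\le\;M\int_{0}^{t}F(x)\,{\rm d}x .
$$

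Next I would divide through by $F(t)>0$. By the definition (\ref{wmit}) the middle term becomes $\widetilde{\mu}_\psi(t)$, while by (\ref{mit}) the outer terms become $m\,\widetilde{\mu}(t)$ and $M\,\widetilde{\mu}(t)$, giving
$$
 m\,\widetilde{\mu}(t)\;\le\;\widetilde{\mu}_\psi(t)\;\le\;M\,\widetilde{\mu}(t).
$$
Finally, dividing by $\widetilde{\mu}(t)>0$ produces exactly (\ref{wmu}). As $t\in D$ was arbitrary, the claim follows.

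There is essentially no serious obstacle here: the only point requiring a moment's care is the strict positivity of $\widetilde{\mu}(t)$ for $t\in D$, which one gets because $F$ is a nondecreasing CDF with $F(t)>0$, so $F$ is bounded below by a positive constant on an interval of positive length inside $[0,t]$ and the integral $\int_0^t F(x)\,{\rm d}x$ cannot vanish. One could also remark that the bounds are sharp, being attained when $\phi\equiv m$ or $\phi\equiv M$. If a two-sided version restricted to $t$ with $\widetilde\mu(t)>0$ feels cleaner, the argument is unchanged.
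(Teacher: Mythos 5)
Your proof is correct and is exactly the argument the paper has in mind: the paper's proof is the one-line remark that the claim follows immediately from (\ref{wmit}) and (\ref{mit}), i.e.\ from multiplying $m\le\phi(x)\le M$ by $F(x)\ge 0$, integrating, and normalizing. Your added care about the strict positivity of $\widetilde{\mu}(t)$ for $t\in D$ is a reasonable (if routine) point that the paper leaves implicit.
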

\begin{proof}
The proof is immediately obtained by recalling (\ref{wmit}) and (\ref{mit}).
\end{proof}
\par
Lemma \ref{lemmnpsi} allows us to obtain ordering relations between the WMIT and MIT functions. Indeed,
(i) if $M=1,$ then $\widetilde{\mu}_\psi(t)\leq \widetilde{\mu}(t)$ for all $t\in D;$
(ii) if $m=1,$ then $\widetilde{\mu}_\psi(t)\geq \widetilde{\mu}(t)$ for all $t\in D.$
\par
For instance, if $\phi(t)=\overline F(t)$, then $M$=1, and $\widetilde{\mu}_\psi(\infty)=\mathbb E[|X-X'|]/2$,
where $X'$ is an independent copy of $X,$ provided that the condition (\ref{EPSIX}) is satisfied.
\par
Hereafter, we focus on a nonparametric class of lifetime distribution based on increasing nature of weighted mean inactivity time function $\widetilde{\mu}_\psi(t).$ As pointed out earlier there is no lifetime distribution with decreasing WMIT function.
\begin{definition}\rm
A non-negative random variable $X$ is said to have increasing weighted mean inactivity time function, denoted by IWMIT, if $\widetilde{\mu}_\psi(t)$ is an increasing function of $t\in D.$
\end{definition}
In the following theorem, we provide sufficient conditions for the increasingness of $\widetilde{\mu}_\psi(t).$ We recall that $X$ is said to be increasing in the mean inactivity time function,  i.e.\ IMIT, if $\widetilde{\mu}(t)$ is increasing in $t$.
\begin{theorem}\label{thm:philambda}
Let $X$ be an absolutely continuous non-negative random variable with reversed hazard rate
function $\tau(x)$ defined as in (\ref{rhr}). If any of the following conditions hold:
\begin{description}
  \item[(i)] ${\phi(x)}/{\tau(x)}$ is an increasing function of $x$;
  \item[(ii)] $\phi(x)$ is increasing in $x$ and $X$ is IMIT;
  \item[(iii)] ${\psi(x)\tau(x)}/{\phi(x)}$ is decreasing in $x>0$;
\end{description}
then $X$ is $IWMIT.$
\end{theorem}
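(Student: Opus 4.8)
The plan is to reduce each case to the differential identity of Lemma~\ref{lemmmk}, namely $\widetilde{\mu}_\psi'(t)=\phi(t)-\tau(t)\,\widetilde{\mu}_\psi(t)$ for $t\in D$. Since $\tau(t)\ge 0$, it suffices to prove in each case that $\widetilde{\mu}_\psi(t)\le\phi(t)/\tau(t)$ on $D$ (wherever $\tau(t)>0$), because then $\widetilde{\mu}_\psi'(t)\ge\phi(t)-\tau(t)\cdot\phi(t)/\tau(t)=0$. For part~(i) I would rewrite the WMIT function as a conditional expectation: using $\tau(x)=f(x)/F(x)$ in~(\ref{wmit}),
\[
\widetilde{\mu}_\psi(t)=\frac{1}{F(t)}\int_0^t\frac{\phi(x)}{\tau(x)}\,f(x)\,{\rm d}x=\mathbb{E}[\phi(X)/\tau(X)\mid X\le t].
\]
On the event $\{X\le t\}$ the increasing function $x\mapsto\phi(x)/\tau(x)$ does not exceed $\phi(t)/\tau(t)$, so the conditional expectation is at most $\phi(t)/\tau(t)$, which is the required bound.

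For part~(ii), since $\phi$ is increasing one has $\phi(x)\le\phi(t)$ for $0\le x\le t$, and hence by~(\ref{wmit}) and~(\ref{mit}),
\[
\widetilde{\mu}_\psi(t)=\frac{1}{F(t)}\int_0^t\phi(x)F(x)\,{\rm d}x\le\frac{\phi(t)}{F(t)}\int_0^t F(x)\,{\rm d}x=\phi(t)\,\widetilde{\mu}(t).
\]
Substituting this into Lemma~\ref{lemmmk} and recalling~(\ref{dmutilde}),
\[
\widetilde{\mu}_\psi'(t)\ge\phi(t)-\tau(t)\phi(t)\widetilde{\mu}(t)=\phi(t)\,(1-\tau(t)\widetilde{\mu}(t))=\phi(t)\,\widetilde{\mu}'(t)\ge 0,
\]
the final inequality holding because $\phi(t)\ge 0$ and $X$ is IMIT.

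The delicate case is part~(iii), where I would combine the decomposition~(\ref{wmit2}), $\widetilde{\mu}_\psi(t)=\psi(t)-\mu_\psi(t)$, with a monotone-ratio averaging argument. Put $r(x):=\phi(x)F(x)/(\psi(x)f(x))$, which is the reciprocal of the decreasing function $\psi(x)\tau(x)/\phi(x)$ of the hypothesis and is therefore increasing on $(0,\infty)$. Since $r$ is increasing and $\psi(x)f(x)\ge 0$ for all $x$,
\[
\int_0^t\phi(x)F(x)\,{\rm d}x=\int_0^t r(x)\,\psi(x)f(x)\,{\rm d}x\le r(t)\int_0^t\psi(x)f(x)\,{\rm d}x=r(t)\,F(t)\,\mu_\psi(t).
\]
Inserting $r(t)=\phi(t)F(t)/(\psi(t)f(t))$ and using $\mu_\psi(t)=\mathbb{E}[\psi(X)\mid X\le t]\le\psi(t)$ (which holds because $\psi$ is increasing), the right-hand side is at most $\phi(t)F^2(t)/f(t)=\phi(t)F(t)/\tau(t)$. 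Multiplying both sides by $f(t)/F^2(t)$ gives $\tau(t)\,\widetilde{\mu}_\psi(t)\le\phi(t)$, that is $\widetilde{\mu}_\psi'(t)\ge 0$.

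I expect the main obstacle to lie in part~(iii): one has to recognize that hypothesis~(iii) is precisely the statement that $r(x)$ is increasing, and that the bound produced by the averaging step must then be closed using the elementary inequality $\mu_\psi(t)\le\psi(t)$. A secondary point is to check that the ratios above are well defined; I would note that hypothesis~(iii) tacitly forces $\phi>0$, that $\psi(x)>0$ in the interior of the support of $X$, and that $\tau(t)>0$ for a.e.\ $t\in D$, all of which suffice for the argument to go through.
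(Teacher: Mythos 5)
Your proposal is correct, and for part (iii) it takes a genuinely different route from the paper. For parts (i) and (ii) the paper simply refers to the analogous results for the weighted mean residual life in Toomaj and Di Crescenzo \cite{ToomajAD2020Mathematics} and omits the details; what you write is exactly the natural MIT analogue of those omitted arguments, and your unifying observation --- that by Lemma \ref{lemmmk} it suffices in every case to establish $\tau(t)\,\widetilde{\mu}_\psi(t)\leq\phi(t)$ --- is a clean way to package all three parts. For part (iii), however, the paper does not go through Lemma \ref{lemmmk} at all: it instead proves that the ratio $\widetilde{\mu}_\psi(t)/\psi(t)$ is increasing, by writing $\psi(t)F(t)=\int_0^t[\psi(x)f(x)+\phi(x)F(x)]\,{\rm d}x$, recognizing hypothesis (iii) as precisely the $TP_2$ property of the pair of integrands $\bigl(\psi f+\phi F,\ \phi F\bigr)$, and invoking Karlin's composition theorem; monotonicity of $\widetilde{\mu}_\psi=\psi\cdot(\widetilde{\mu}_\psi/\psi)$ then follows since $\psi$ is nonnegative and increasing. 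Your replacement --- the Chebyshev-type bound $\int_0^t r(x)\psi(x)f(x)\,{\rm d}x\leq r(t)\int_0^t\psi(x)f(x)\,{\rm d}x$ for the increasing function $r=\phi F/(\psi f)$, closed with $\mu_\psi(t)\leq\psi(t)$ --- is more elementary (no total positivity machinery) and arrives at the same pointwise inequality $\tau\widetilde{\mu}_\psi\leq\phi$. What the paper's route buys in exchange is the extra conclusion that $\widetilde{\mu}_\psi/\psi$ is monotone, no need to differentiate $\widetilde{\mu}_\psi$, and slightly better behaviour at points where $f$ vanishes, since it never divides by $f$; your version needs the well-definedness caveats you already flag (in particular $\phi>0$ and $\tau>0$ a.e.\ on $D$), which are at the same level of implicit regularity the paper itself assumes.
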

\begin{proof}
The proof under the conditions (i) and (ii) is similar to that the proof of Theorems 1 and 2 of Toomaj and Di Crescenzo \cite{ToomajAD2020Mathematics}, respectively, and hence is omitted.
Now, consider case (iii); since $\psi(t)\geq 0 $ is increasing in $t,$
it is sufficient to prove that the following function is increasing in $t>0$:
\begin{equation}
\frac{\widetilde{\mu}_\psi(t)}{\psi(t)}
=\frac{\displaystyle\int_{0}^{t}\displaystyle \phi(x){F}(x)\,{\rm d}x} {\psi(t)F(t)}
=\frac{\displaystyle\int_{0}^{t}\displaystyle \phi(x){F}(x)\,{\rm d}x}
{\displaystyle\int_{0}^{t}[\psi(x)f(x)+\phi(x){F}(x)]\,{\rm d}x}.
 \label{eq:cexyt}
\end{equation}
Define now
$$
\Psi(i,t):=\int_{0}^{\infty}\nu(i,x)\eta(x,t)\,{\rm d}x,\qquad i=1,2,
$$
where
$$
\nu(i,x)=\left\{
\begin{array}{lcl}
\psi(x)f(x)+\phi(x){F}(x), \ &~~& \ i=1\\[1mm]
\displaystyle \phi(x){F}(x), \ &~~& \ i=2,
\end{array}
\right.
\qquad \hbox{and} \qquad
\eta(x,t)={\bf 1}[x\leq t],
$$
with ${\bf 1}[\cdot]$ the indicator function, i.e.\
${\bf 1}[\pi]=1$ when $\pi$ is true, and ${\bf 1}[\pi]=0$ otherwise. Due to the assumption, $\nu(i,x)$ is $TP_2$ in $(i,x)\in\{1,2\}\times(0,\infty).$
On the other hand, it is easy to observe that $\eta(x,t)$ is $TP_2$ in $(x,t)\in (0,\infty)^2$.
From the general composition theorem of Karlin \cite{Karlin}, it follows that $\Psi(i,t)$
is $TP_2$ in $(i,t)\in\{1,2\}\times (0,\infty)$. This implies that $\widetilde{\mu}_\psi(t)$
is an increasing function of $t$, due to (\ref{eq:cexyt}), and this gives the desired result.
\end{proof}
\begin{remark}\label{rem:xtaux}\rm
\begin{description}
  \item[(i)] It should be noted that the condition that ${\phi(x)}/{\tau(x)}$ is an increasing function of $x$, given in case (i) of Theorem \ref{thm:philambda}, is ensured under the assumptions that $\psi(t)$
is convex and $X$ is DRHR, i.e.\ the reversed hazard rate function $\tau(t)$ is decreasing in $t$.
  \item[(ii)] We point out that if $X$ is an absolutely continuous non-negative random variable
with the reversed hazard rate $\tau(x)$ such that $x\tau(x)$ is decreasing
in $x>0$ and if $\psi(x)=x^r$, $r\geq1$, then
$$
 \frac{\psi(x)\tau(x)}{\phi(x)}=\frac{1}{r}x\tau(x),
$$
is a decreasing function of $x>0.$ In this case, thanks to the Part (iii) of Theorem \ref{thm:philambda}, one can conclude that $X$ is $IWMIT.$ (See Proposition 13 of Di Crescenzo {\em et al.}\ \cite{DMM} for a characterization of the property that $x\tau(x)$ is decreasing).
\end{description}
\end{remark}
\par
The following examples show the usefulness of Theorem \ref{thm:philambda}.
\begin{example}
Let $X$ have Fr\'echet distribution with CDF $F(x)=\exp\{-cx^{-\gamma}\},$ $x>0,$ for $c,\gamma>0.$ Then, under the conditions considered in Part (ii) of Remark \ref{rem:xtaux} we have that $X$ is $IWMIT.$
\end{example}
\begin{example}
Assume that  $\phi(t)=\tau(t)\widetilde{\mu}(t)=1-\widetilde{\mu}'(t)$, where the last equality is due to (\ref{dmutilde}).
From (\ref{weight function}) we thus have $\psi(t)=\int_0^t\phi(u)\,{\rm d}u=t-\widetilde{\mu}(t)$ for all $t>0$.
In this case, from (\ref{wmit}) we get
$$
\widetilde{\mu}_{\psi}(t) =\frac{1}{F(t)}\int_{0}^tf(x)\widetilde{\mu}(x)\,{\rm d}x, \qquad t\in D.
$$
Hence, making use of Theorem 5.2. of Di Crescenzo and Longobardi \cite{Di-Longobardi-2009}, we have
\begin{equation}\label{eq:ce}
\widetilde{\mu}_{\psi}(t)={\cal CE}(X;t),\qquad t>0,
\end{equation}
where ${\cal CE}(X;t)$ is known as the {\em dynamic cumulative entropy} of $X$. Recalling Corollary 6.1 of Di Crescenzo and Longobardi \cite{Di-Longobardi-2009}, we have that if $X$ is IMIT, then  ${\cal CE}(X;t)$ is increasing in $t$, and thus from
(\ref{eq:ce}) we obtain  that $X$ is IWMIT in this case.
This conclusion can also be obtained from point (i) of Theorem \ref{thm:philambda}.
\end{example}
The following example is analogous to Example 2 of Toomaj and Di Crescenzo \cite{ToomajAD2020Mathematics}.
\begin{example}\label{exam:logf}
Let $X$ be an absolutely continuous non-negative random variable with decreasing
and differentiable PDF $f(x)$, with $D=(0,\infty)$ and $0<f(0)<\infty$. Let
$$
 \psi(x)=-\log\frac{f(x)}{f(0)},
 \qquad
 \phi(x)=-\frac{f'(x)}{f(x)}\geq 0, \qquad x>0.
$$
Hence, from (\ref{wmit}) and after some calculations, the WMIT function is given by
\begin{eqnarray}\label{eq:Past}
\widetilde{\mu}_{\psi}(t)
 =\int_0^t  \frac{f(x)}{{F}(t)}\log\frac{f(x)}{{f}(t)}\,{\rm d}x
 =-\overline H(t) -\log \tau(t),
 \qquad t>0,
\end{eqnarray}
where
\begin{equation}
 \overline H(t) =-\int_0^t  \frac{f(x)}{{F}(t)}\log\frac{f(x)}{{F}(t)}\,{\rm d}x,
 \qquad t>0,
 \label{eq:pastentr}
\end{equation}
is the past entropy of $X$ (cf.\ Di Crescenzo and Longobardi \cite{DiCrescenzo2002}
and Muliere {\em et al.}\ \cite{Muliere1993}). In this case, due to condition (i) of
Theorem \ref{thm:philambda}, if
$$
 \frac{f'(x)  F(x)}{f^2(x)} \quad \hbox{is decreasing in $x>0$},
$$
then $X$ is $IWMIT$. Equivalently,  if
$\displaystyle\frac{\tau'(x)}{\tau^2(x)}$ is decreasing in $x>0$ then $X$ is $IWMIT$.
\end{example}
%
\section{Stochastic Comparisons}
In this section, we focus our attention on the relations between weighted the mean inactivity time order
and other some well-known stochastic orders. In this regard, we first recall some stochastic orders (see Shaked and Shanthikumar \cite{Shaked}, and Kayid and Ahmad \cite{KayidAhmad2004}).
\begin{definition}\label{def:stochord}
\rm
Let $X$ and $Y$ be two absolutely continuous non-negative random variables
with cumulative distribution functions ${F}(t)$ and $G(t),$ and mean inactivity time functions
$\widetilde{\mu}_{X}(t)$ and $\widetilde{\mu}_{Y}(t),$ respectively. Then:
\begin{itemize}
  \item $X$ is said to be smaller than $Y$ in the reversed hazard rate order, denoted by $X\leq_{rhr}Y,$ if and only if
$$
 {G}(t)/{F}(t)
 \qquad \hbox{is increasing in $t>0.$}
$$
  \item $X$ is said to be smaller than $Y$ in the mean inactivity time order, denoted by $X\leq_{mit}Y,$ if $\widetilde{\mu}_{X}(t)\geq \widetilde{\mu}_{Y}(t)$ for all $t>0,$ or equivalently,
  $$
 \int_{0}^{t}{G}(x)\,{\rm d}x\Big/\int_{0}^{t}{F}(x)\,{\rm d}x
 \qquad \hbox{is increasing in $t>0.$}
$$
  \item $X$ is said to be smaller than $Y$ in the dispersive order,
denoted by $X\leq_{disp}Y,$ if and only if,
\begin{equation}\label{disp:def}
 G^{-1}(F(t))-t\qquad \hbox{is increasing in $t>0,$}
\end{equation}
where $G^{-1}(u)=\inf\{x\in \mathbb{R}^+:G(x)\geq u\}$, $u\in[0,1],$ denotes the left-continuous quantile function of $G(x).$
\end{itemize}
\end{definition}
\par
Now, we define a new stochastic order in terms of the weighted mean inactivity time function.
\begin{definition}\rm\label{def:WMIT}
For a given non-negative weight function $\phi(\cdot)$, let $X$ and $Y$ have the weighted mean inactivity time functions $\widetilde{\mu}_{\psi(X)}(t)$ and $\widetilde{\mu}_{\psi(Y)}(t),$ respectively. Then, $X$ is said to be smaller than $Y$ in the weighted mean inactivity time function with respect to weight function $\phi(x)$, denoted by $X\leq_{wmit}^\phi Y,$ if and only if,
$$
 \widetilde{\mu}_{\psi(X)}(t)\geq \widetilde{\mu}_{\psi(Y)}(t) \qquad \hbox{for all $t>0$ such that $F(t)>0$ and $G(t)>0$.}
$$
\end{definition}
The next theorem provides equivalent conditions for the weighted mean inactivity time order.
\begin{theorem}\label{equivalent:wmrlthm}
Let $X$ and $Y$ be two lifetime random variables with CDFs $F$ and $G,$ respectively. Then, for any non-negative weight function $\phi(\cdot),$ the following statements are equivalent:
\begin{description}
  \item[(i)] $X\leq_{wmit}^\phi Y;$
  \item[(ii)] $\frac{\int_{0}^{t}\phi(x){G}(x)\,{\rm d}x}{\int_{0}^{t}\phi(x){F}(x)\,{\rm d}x}$ is increasing in $t>0;$
  \item[(iii)] $\mathbb{E}[\psi(X)|X\leq t]\leq \mathbb{E}[\psi(Y)|Y\leq t]$ for all $t>0.$
\end{description}
\end{theorem}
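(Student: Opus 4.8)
The plan is to prove the two equivalences (i) $\Leftrightarrow$ (iii) and (i) $\Leftrightarrow$ (ii) separately; the first is essentially immediate, and the second rests on a monotonicity-of-a-ratio computation. Throughout I would set $A_F(t)=\int_{0}^{t}\phi(x)F(x)\,{\rm d}x$ and $A_G(t)=\int_{0}^{t}\phi(x)G(x)\,{\rm d}x$, so that by (\ref{wmit}) one has $\widetilde{\mu}_{\psi(X)}(t)=A_F(t)/F(t)$ and $\widetilde{\mu}_{\psi(Y)}(t)=A_G(t)/G(t)$, and hence (i) reads $A_F(t)/F(t)\geq A_G(t)/G(t)$ for all $t$ with $F(t)>0$, $G(t)>0$. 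For (i) $\Leftrightarrow$ (iii): since the weight $\phi$, and thus the cumulative weight $\psi$ of (\ref{weight function}), is common to $X$ and $Y$, the decomposition (\ref{wmit2}) gives $\widetilde{\mu}_{\psi(X)}(t)-\widetilde{\mu}_{\psi(Y)}(t)=\mathbb{E}[\psi(Y)\mid Y\leq t]-\mathbb{E}[\psi(X)\mid X\leq t]$, because the $\psi(t)$ terms cancel; therefore $\widetilde{\mu}_{\psi(X)}(t)\geq\widetilde{\mu}_{\psi(Y)}(t)$ for all admissible $t$ is exactly $\mathbb{E}[\psi(X)\mid X\leq t]\leq\mathbb{E}[\psi(Y)\mid Y\leq t]$ for all such $t$, which is (iii).

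For (i) $\Leftrightarrow$ (ii) I would first rewrite (i), after clearing the positive denominators, as $G(t)A_F(t)\geq F(t)A_G(t)$, equivalently $A_G(t)/A_F(t)\leq G(t)/F(t)$. Then, using $A_F'(t)=\phi(t)F(t)$ and $A_G'(t)=\phi(t)G(t)$, I would differentiate the ratio occurring in (ii):
\begin{align*}
\frac{{\rm d}}{{\rm d}t}\frac{A_G(t)}{A_F(t)}
&=\frac{A_G'(t)A_F(t)-A_G(t)A_F'(t)}{A_F^2(t)}
=\frac{\phi(t)\bigl[G(t)A_F(t)-F(t)A_G(t)\bigr]}{A_F^2(t)}\\
&=\frac{\phi(t)F(t)G(t)}{A_F^2(t)}\,\Bigl[\widetilde{\mu}_{\psi(X)}(t)-\widetilde{\mu}_{\psi(Y)}(t)\Bigr].
\end{align*}
Since the prefactor $\phi(t)F(t)G(t)/A_F^2(t)$ is nonnegative, the ratio $A_G/A_F$ is increasing in $t$ if and only if $\phi(t)\,[\widetilde{\mu}_{\psi(X)}(t)-\widetilde{\mu}_{\psi(Y)}(t)]\geq 0$ for every $t>0$. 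Statement (i) clearly forces this, yielding (i) $\Rightarrow$ (ii); conversely, at each $t$ with $\phi(t)>0$ this returns the pointwise inequality $\widetilde{\mu}_{\psi(X)}(t)\geq\widetilde{\mu}_{\psi(Y)}(t)$, i.e.\ (i).

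The step I expect to require care is precisely completing (ii) $\Rightarrow$ (i) at points where $\phi$ vanishes, since there the sign of ${\rm d}/{\rm d}t\,(A_G/A_F)$ carries no information about $\widetilde{\mu}_{\psi(X)}-\widetilde{\mu}_{\psi(Y)}$. I would dispose of this by a continuity argument: $F,G,A_F,A_G$ are continuous, so the inequality $G(t)A_F(t)\geq F(t)A_G(t)$, established on the open set $\{t:\phi(t)>0\}$, passes to its closure, and hence to all $t>0$ under the natural proviso that $\phi$ does not vanish identically on any subinterval of $D$ (which covers all weight functions of interest); alternatively one simply adopts the standing assumption $\phi>0$ on $(0,\infty)$, under which no such subtlety arises. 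A minor accompanying remark is that $A_F(t)>0$ for $t\in D$, so that every division above is legitimate — again automatic once $\phi$ is not a.e.\ zero near the origin.
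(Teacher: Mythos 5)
Your proof is correct and follows essentially the same route as the paper: both directions of (i) $\Leftrightarrow$ (ii) are obtained by differentiating the ratio $A_G(t)/A_F(t)$ and reading off the sign of $\phi(t)\left[G(t)A_F(t)-F(t)A_G(t)\right]$, and (i) $\Leftrightarrow$ (iii) is the same cancellation of the $\psi(t)$ terms via (\ref{wmit2}). Your additional care at points where $\phi$ vanishes --- which the paper's proof passes over silently --- together with the density/continuity proviso is a legitimate refinement rather than a deviation in method.
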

\begin{proof}
In this case, we have
\[
\frac{\rm d}{{\rm d}t}\frac{\int_{0}^{t}\phi(x){G}(x)\,{\rm d}x}
{\int_{0}^{t}\phi(x){F}(x)\,{\rm d}x}
=\frac{\phi(t)\int_{0}^{t}\phi(x)\left[{F}(x){G}(t)-{G}(x){F}(t)\right]{\rm d}x}{\left[\int_{0}^{t}\phi(x){F}(x)\,{\rm d}x\right]^2},\qquad  t>0.
\]
By the definition, one has $X\leq_{wmit}^\phi Y$ if and only if $\int_{0}^{t}\phi(x)\left[{F}(x){G}(t)-{G}(x){F}(t)\right]{\rm d}x\geq0$ for all $t>0.$
This proves that (i) and (ii) are equivalent.
Finally, the equivalence of statements (i) and (iii) is clear from (\ref{wmit}).
\end{proof}
\par
It is worth to mention that the choice $\phi(x)=x$ in Definition \ref{def:WMIT} coincides with the so-called strong mean inactivity time (SMIT) order studied by Kayid and Izadkhah \cite{KayidIzadkhah2014}. As pointed out by the latter authors, the SMIT order lies down between the reversed hazard rate and the MIT orders. So, the WMIT order is a generalization of SMIT order. Hereafter, we will show that the WMIT order can be successfully applied to stochastic ordering between the variance of transformed random variable and the weighted GCE. In the next theorem, we relate the WMIT order to the reversed hazard rate and the mean inactivity time orders.
\begin{theorem}\label{thm:wmit}
Let $X$ and $Y$ be two absolutely continuous non-negative random variables with CDFs ${F}(t)$ and ${G}(t)$
and WMIT functions $\widetilde{\mu}_{\psi(X)}(t)$ and $\widetilde{\mu}_{\psi(Y)}(t),$ respectively,
where  $\psi(\cdot)$ is an increasing non-negative and differentiable function on $(0,\infty).$ Then
\begin{description}
  \item[(i)] if $X\leq_{rhr}Y,$ then $X\leq_{wmit}^\phi Y;$
  \item[(ii)] if $\psi(t)$ is convex on $[0,\infty)$ and $X\leq_{wmit}^\phi Y,$ then $X\leq_{mit}Y.$
\end{description}
\end{theorem}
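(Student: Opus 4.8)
The plan is to reduce both claims to the integral criterion established in the proof of Theorem~\ref{equivalent:wmrlthm}, namely that $X\leq_{wmit}^\phi Y$ holds if and only if
\[
\Phi(t):=\int_{0}^{t}\phi(x)\bigl[F(x)G(t)-G(x)F(t)\bigr]\,{\rm d}x\ \geq\ 0\qquad\text{for all }t>0,
\]
together with the fact, read off from Definition~\ref{def:stochord}, that $X\leq_{mit}Y$ holds if and only if $\Phi_{0}(t):=G(t)\int_{0}^{t}F(x)\,{\rm d}x-F(t)\int_{0}^{t}G(x)\,{\rm d}x\geq 0$ for all $t>0$.

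Part~(i) is then immediate. The hypothesis $X\leq_{rhr}Y$ says that $G(t)/F(t)$ is increasing, which (since it also forces $F(x)=0\Rightarrow G(x)=0$) is equivalent to $F(x)G(t)-G(x)F(t)\geq 0$ for all $0<x\leq t$; as $\phi\geq 0$, the integrand of $\Phi(t)$ is nonnegative, so $\Phi(t)\geq 0$ and $X\leq_{wmit}^\phi Y$. Alternatively one may use $X\leq_{rhr}Y\iff[X\mid X\leq t]\leq_{st}[Y\mid Y\leq t]$ for all $t$, and then statement~(iii) of Theorem~\ref{equivalent:wmrlthm} together with the monotonicity of $\psi$.

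Part~(ii) is where the work is. Since $\psi$ is convex, $\phi$ is increasing, so $\phi(x)=\phi(0)+\int_{0}^{x}\phi'(u)\,{\rm d}u$ with $\phi'\geq 0$. Integrating by parts in $\Phi(t)$ and writing $L(u,t):=G(t)\int_{0}^{u}F-F(t)\int_{0}^{u}G$ (so that $L(0,t)=0$ and $L(t,t)=\Phi_{0}(t)$), I would derive the identity
\[
\phi(t)\,\Phi_{0}(t)\ =\ \Phi(t)+\int_{0}^{t}\phi'(u)\,L(u,t)\,{\rm d}u\ \geq\ \int_{0}^{t}\phi'(u)\,L(u,t)\,{\rm d}u .
\]
The key elementary fact is that the $s$-derivative of $\int_{0}^{s}F/\int_{0}^{s}G$ equals $-\Phi_{0}(s)/(\int_{0}^{s}G)^{2}$, so on any interval $(0,t]$ on which $\Phi_{0}\geq 0$ the ratio $\int_{0}^{\cdot}F/\int_{0}^{\cdot}G$ is nonincreasing; together with $\Phi_{0}(t)\geq 0$ this yields $\int_{0}^{u}F/\int_{0}^{u}G\geq\int_{0}^{t}F/\int_{0}^{t}G\geq F(t)/G(t)$ for $u\leq t$, i.e.\ $L(u,t)\geq 0$ on $(0,t]$. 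The conclusion $\Phi_{0}\geq 0$ everywhere then follows by contradiction: if some $\Phi_{0}(t)<0$, let $t^{*}$ be the infimum of such $t$; then $\Phi_{0}\geq 0$ on $(0,t^{*}]$ and $\Phi_{0}(t^{*})=0$, so $L(\cdot,t^{*})\geq 0$ on $(0,t^{*}]$, the displayed inequality at $t=t^{*}$ forces $\int_{0}^{t^{*}}\phi'(u)L(u,t^{*})\,{\rm d}u=0$, hence $L(\cdot,t^{*})\equiv 0$ on the support of $\phi'$, and a local analysis of $L(\cdot,t^{*})$ near $u=t^{*}$ (where its $u$-derivative vanishes) contradicts $\Phi_{0}$ becoming negative past $t^{*}$. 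The degenerate situations --- $\phi$ constant near $t^{*}$, where $\widetilde{\mu}_{\psi}$ is there merely a constant multiple of $\widetilde{\mu}$, and $t^{*}=0$, handled by expanding $F$ and $G$ at the origin --- are treated separately. This is the natural extension of the Kayid--Izadkhah argument that the SMIT order (the case $\phi(x)=x$) implies the MIT order.

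I expect the last step of part~(ii) to be the main obstacle. Lemma~\ref{lemmnbwue} alone does not suffice: it gives $\widetilde{\mu}_{\psi(X)}(t)\geq\psi(\widetilde{\mu}_{X}(t))$ and $\widetilde{\mu}_{\psi(Y)}(t)\geq\psi(\widetilde{\mu}_{Y}(t))$, which bound $\widetilde{\mu}_{\psi(X)}(t)$ and $\psi(\widetilde{\mu}_{Y}(t))$ from the same side and so cannot be chained with $\widetilde{\mu}_{\psi(X)}(t)\geq\widetilde{\mu}_{\psi(Y)}(t)$ to separate $\widetilde{\mu}_{X}$ from $\widetilde{\mu}_{Y}$; this is why the integral argument --- and especially ruling out the equality case at $t^{*}$ --- is needed.
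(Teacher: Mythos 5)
Part (i) is fine and coincides with the paper's own argument. The problem is part (ii). Your identity $\phi(t)\,\Phi_0(t)=\Phi(t)+\int_0^t\phi'(u)\,L(u,t)\,{\rm d}u$ is correct, but it reduces the claim to the nonnegativity of $L(u,t)=G(t)\int_0^uF-F(t)\int_0^uG$ for all $u\le t$, which is a parametrized strengthening of the very conclusion $\Phi_0\ge 0$ you are trying to prove: your bootstrap only yields $L(\cdot,t)\ge0$ on intervals where $\Phi_0\ge0$ is already known. The contradiction step at the first zero $t^{*}$ is then only asserted, not carried out. At $t^{*}$ you learn $L(\cdot,t^{*})=0$ a.e.\ on the support of $\phi'$ and nothing at all if $\phi$ is constant on $(0,t^{*})$; and for $t$ slightly beyond $t^{*}$ the sign of $L(u,t)$ for $u$ near $t$ depends on whether $F/G$ increases past $t^{*}$, which you do not control. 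The situations you set aside as ``degenerate'' are exactly where the sketch breaks, so as written the argument does not close.

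The missing ingredient is that the hypothesis is usable in a stronger form than ``$\Phi(t)\ge0$ for each $t$'': by Theorem \ref{equivalent:wmrlthm}(ii) it is equivalent to the monotonicity of $t\mapsto\int_0^t\phi G\big/\!\int_0^t\phi F$, which gives nonnegativity of the \emph{weighted} partial integrals $A_t(s):=\int_0^s\phi(x)\bigl[F(x)G(t)-G(x)F(t)\bigr]\,{\rm d}x$ for \emph{all} $s\le t$, since then $\int_0^s\phi F\big/\!\int_0^s\phi G\ \ge\ \int_0^t\phi F\big/\!\int_0^t\phi G\ \ge\ F(t)/G(t)$. This is the paper's route: one integrates the nonnegative decreasing function $1/\phi$ against ${\rm d}A_t$ (Barlow--Proschan, Lemma 7.1(b); equivalently $\int_0^t\frac{1}{\phi}\,{\rm d}A_t=\frac{A_t(t)}{\phi(t)}+\int_0^tA_t(x)\frac{\phi'(x)}{\phi^2(x)}\,{\rm d}x\ \ge\ 0$) and obtains $\Phi_0(t)\ge0$ directly, with no continuity or first-zero argument. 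In short: keep $\phi$ inside the partial integrals, where the hypothesis controls their sign, rather than integrating by parts onto the unweighted $L(u,t)$, whose sign only the conclusion controls. Your closing observation that Lemma \ref{lemmnbwue} cannot be chained to give (ii) is correct.
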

\begin{proof}
(i) Since $X\leq_{rhr}Y,$ then ${G}(x)/{F}(x)$ is increasing in $x,$ or equivalently
$$
\left[\frac{{F}(x)}{{F}(t)}-\frac{{G}(x)}{{G}(t)}\right]\geq0,\qquad x\leq t.
$$
Since $\phi(x)\geq0,$ from (\ref{wmit}) one can conclude that
$$
 \widetilde{\mu}_{\psi(X)}(t)-\widetilde{\mu}_{\psi(Y)}(t)
 =\int_{0}^{t}\phi(x)\left[\frac{{F}(x)}{{F}(t)}-\frac{{G}(x)}{{G}(t)}\right]{\rm d}x\geq0,
$$
so that $X\leq_{wmit}^\phi Y$.
\\
\noindent (ii) Let
$$
 z_t(x):=\phi(x)\left[\frac{F(x)}{F(t)}-\frac{G(x)}{G(t)}\right]{\bf 1}[x\leq t],
$$
and let ${\rm d}Z_t(x)=z_t(x)\,{\rm d}x.$
Then, for all $t>0$ from (\ref{mit}) we get
$$
\widetilde{\mu}_{Y}(t)-\widetilde{\mu}_{X}(t)
=\int_{0}^{\infty}\frac{1}{\phi(x)} \,{\rm d}Z_t(x)
=\int_{0}^{t}\frac{1}{\phi(x)}\,{\rm d}\left[\int_{0}^{x}\phi(u)\left(\frac{F(u)}{F(t)}-\frac{G(u)}{G(t)}\right){\rm d}u\right],
$$
where $1/\phi(x)$ is a non-negative decreasing function due to assumption.
For all $s>t>0,$ we have
\[
\int_{0}^{s}{\rm d}Z_t(x)=\int_{0}^{t}{\rm d}Z_t(x)=
\int_{0}^{t}\phi(x)\left[\frac{F(x)}{F(t)}-\frac{G(x)}{G(t)}\right]{\rm d}x\geq0,
\]
where the inequality is obtained by the assumption $X\leq_{wmit}^\phi Y.$ Let us assume that $t>s>0.$
Due to (\ref{wmit}), assumption $X\leq_{wmit}^\phi Y$ implies that, for all $t>0,$
\begin{equation}\label{eq:1}
\frac{\int_{0}^{t}\phi(x)F(x)\,{\rm d}x}{\int_{0}^{t}\phi(x)G(x)\,{\rm d}x}\geq\frac{F(t)}{G(t)}.
\end{equation}
In addition, $X\leq_{wmit}^\phi Y$ implies that ${\int_{0}^{x}\phi(u)G(u)\,{\rm d}u}/{\int_{0}^{x}\phi(u)F(u)\,{\rm d}u},$
is increasing in $x,$ and then it holds that, for all $t>s>0,$
\begin{equation}\label{eq:2}
\frac{\int_{0}^{s}\phi(x)F(x)\,{\rm d}x}{\int_{0}^{s}\phi(x)G(x)\,{\rm d}x}\geq
\frac{\int_{0}^{t}\phi(x)F(x)\,{\rm d}x}{\int_{0}^{t}\phi(x)G(x)\,{\rm d}x}.
\end{equation}
Combining (\ref{eq:1}) and (\ref{eq:2}), one gets, for all $t>s>0,$
$$
\frac{\int_{0}^{s}\phi(x)F(x)\,{\rm d}x}{\int_{0}^{s}\phi(x)G(x)\,{\rm d}x}
\geq\frac{F(t)}{G(t)}.
$$
which provides that, for all $t>s>0,$
\[
\int_{0}^{s}{\rm d}Z_t(x)=\int_{0}^{s}\phi(x)\left[\frac{G(x)}{G(t)}
-\frac{F(x)}{F(t)}\right]{\rm d}x\geq0.
\]
Therefore, $X\leq_{wmit}^\phi Y$ implies that $\int_{0}^{s}{\rm d}Z_t(x)\geq 0,$ for all $s,t>0.$ Finally, appealing to Lemma 7.1(b) of Barlow and Proschan \cite{Barlow}, it is concluded that $\int_{0}^{\infty}\frac{1}{\phi(x)}\,{\rm d}Z_t(x)\geq0,$ for all $t>0,$ and hence the proof is completed.
\end{proof}
In the context of Theorem \ref{thm:wmit} (i), Counterexample 1 of Kayid and Izadkhah \cite{KayidIzadkhah2014} shows that $X\leq_{wmit}^\phi Y$ does not imply $X\leq_{rhr} Y$ for an increasing non-negative and differentiable function $\psi(x)={x^2}/{2}.$


\section{Weighted Generalized Cumulative Entropy and Variance}


In the following  two subsections, we discuss  some relevant applications of the weighted mean inactivity time function to supply expressions for the variance of transformed random variable and the weighted generalized cumulative entropy.

\subsection{Variance of Transformed Random Variable}
Recently, Toomaj and Di Crescenzo \cite{ToomajAD2020} showed that the variance of a random variable $X$ can be represented in terms of MIT function as follows:
\begin{equation}\label{sigma2}
 \sigma^2(X)=\mathbb{E}[\widetilde{\mu}^2(X)],
\end{equation}
provided that the expectation exists. In what follows, we extend the result (\ref{sigma2})
to the case of the transformed random variable $\psi(X)$, where $\psi(x)$ is the
cumulative weight function defined in (\ref{weight function}).
Indeed, in the following theorem we express the variance of $\psi(X)$
in terms of the WMIT function (\ref{wmit}).
\begin{theorem}\label{thm:sigma}
Let $X$ be an absolutely continuous non-negative random variable with WMIT function
$\widetilde{\mu}_{\psi}(t),$ and having finite second moment $\mathbb{E}[\psi^2(X)]$. Then
\begin{equation}\label{sigma}
\sigma^2[\psi(X)]=\mathbb{E}[\widetilde{\mu}_{\psi}^2(X)].
\end{equation}
\end{theorem}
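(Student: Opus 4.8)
The plan is to combine the decomposition $\widetilde{\mu}_\psi(t)=\psi(t)-\mu_\psi(t)$ from (\ref{wmit2}) with a single integration by parts, thereby reducing (\ref{sigma}) to an identity about $\mathbb{E}[\mu_\psi^2(X)]$. Set $m:=\mathbb{E}[\psi(X)]$, which is finite since $m^2\leq\mathbb{E}[\psi^2(X)]<\infty$. Since $\psi$ is increasing, $\mu_\psi(t)=\mathbb{E}[\psi(X)|X\leq t]\leq\psi(t)$ for $t\in D$, hence $0\leq\widetilde{\mu}_\psi(t)\leq\psi(t)$; consequently $\mathbb{E}[\widetilde{\mu}_\psi^2(X)]$, $\mathbb{E}[\mu_\psi^2(X)]$ and $\mathbb{E}[\psi(X)\mu_\psi(X)]$ are all bounded by $\mathbb{E}[\psi^2(X)]<\infty$, so every expectation below exists and the expansions are legitimate.

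First I would expand the square by means of (\ref{wmit2}),
\[
\mathbb{E}[\widetilde{\mu}_\psi^2(X)]=\mathbb{E}[\psi^2(X)]-2\,\mathbb{E}[\psi(X)\mu_\psi(X)]+\mathbb{E}[\mu_\psi^2(X)],
\]
so that (\ref{sigma}) is equivalent to the identity $\mathbb{E}[\mu_\psi^2(X)]=2\,\mathbb{E}[\psi(X)\mu_\psi(X)]-m^2$. To establish the latter, put $B(t):=\int_0^t\psi(x)f(x)\,{\rm d}x=F(t)\mu_\psi(t)$, so $B'(t)=\psi(t)f(t)$ a.e., $B(0^+)=0$, $B(\infty)=m$, and $\bigl(f(t)/F^2(t)\bigr){\rm d}t=-{\rm d}\bigl(1/F(t)\bigr)$. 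Integrating by parts,
\[
\mathbb{E}[\mu_\psi^2(X)]=\int_0^\infty\frac{B^2(t)}{F^2(t)}f(t)\,{\rm d}t=\Bigl[-\frac{B^2(t)}{F(t)}\Bigr]_0^\infty+2\int_0^\infty\frac{B(t)B'(t)}{F(t)}\,{\rm d}t=-m^2+2\int_0^\infty\psi(t)\mu_\psi(t)f(t)\,{\rm d}t,
\]
which is exactly $2\,\mathbb{E}[\psi(X)\mu_\psi(X)]-m^2$. Substituting this back into the expansion gives $\mathbb{E}[\widetilde{\mu}_\psi^2(X)]=\mathbb{E}[\psi^2(X)]-m^2=\sigma^2[\psi(X)]$.

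The only genuinely delicate point is the evaluation of the boundary term $[-B^2(t)/F(t)]_0^\infty$. Writing $B^2(t)/F(t)=B(t)\mu_\psi(t)$: as $t\to\infty$ one has $F(t)\to1$, hence $B(t)\to m$ and $\mu_\psi(t)=B(t)/F(t)\to m$, so the term tends to $m^2$; as $t\to0^+$, absolute continuity of $X$ gives $F(0^+)=0$ while $0\leq\mu_\psi(t)\leq\psi(t)\to0$, so $B^2(t)/F(t)=F(t)\mu_\psi^2(t)\to0$. Convergence of the residual integral, $\int_0^\infty\psi(t)\mu_\psi(t)f(t)\,{\rm d}t\leq\mathbb{E}[\psi^2(X)]<\infty$, and the admissibility of splitting the expansion into three finite pieces, both rest on the finite-second-moment hypothesis $\mathbb{E}[\psi^2(X)]<\infty$.

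A shorter, more conceptual alternative is available when $\psi$ is strictly increasing: set $Z:=\psi(X)$, an absolutely continuous non-negative random variable with CDF $F_Z(z)=F(\psi^{-1}(z))$ and $\{Z\leq\psi(t)\}=\{X\leq t\}$. Its ordinary MIT function $\widetilde{\nu}(s):=\mathbb{E}[s-Z|Z\leq s]=F_Z(s)^{-1}\int_0^s F_Z(u)\,{\rm d}u$ then satisfies $\widetilde{\nu}(\psi(t))=\widetilde{\mu}_\psi(t)$ for $t\in D$, as seen by the change of variable $u=\psi(x)$. Applying (\ref{sigma2}) to $Z$ yields $\sigma^2[\psi(X)]=\sigma^2(Z)=\mathbb{E}[\widetilde{\nu}^2(Z)]=\mathbb{E}[\widetilde{\nu}^2(\psi(X))]=\mathbb{E}[\widetilde{\mu}_\psi^2(X)]$. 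This route is quicker but presupposes $\phi>0$ (so that $\psi$ is a bijection and $Z$ absolutely continuous), whereas the integration-by-parts argument works under the stated hypotheses alone; I would therefore take the first route as the main proof and perhaps record the second as a remark.
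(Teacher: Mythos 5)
Your proof is correct and follows essentially the same route as the paper's: expand $\mathbb{E}[\widetilde{\mu}_\psi^2(X)]$ via the decomposition (\ref{wmit2}) and then establish the key identity $\int_0^\infty\mu_\psi^2(x)f(x)\,{\rm d}x=2\int_0^\infty\psi(x)\mu_\psi(x)f(x)\,{\rm d}x-[\mathbb{E}\psi(X)]^2$ by integration by parts (the paper routes this through (\ref{derivemu}) with $w_\psi=B$, you integrate $B^2(t)f(t)/F^2(t)$ directly, but the computation is the same). Your treatment of the boundary terms and finiteness is actually more careful than the paper's, and the alternative argument via reduction to (\ref{sigma2}) is a nice observation, with the caveat about strict monotonicity correctly noted.
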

\begin{proof}
Let us set
$$
w_{\psi}(x):=\mu_{\psi}(x)F(x)=\int_{0}^{x}\psi(t)f(t)\, {\rm d}t,\quad x>0.
$$
Using (\ref{wmit2}), we obtain
\begin{eqnarray}\label{sig11}
\mathbb{E}[\widetilde{\mu}_{\psi}^2(X)]&=&\int_{0}^{\infty}[\psi(x)-\mu_{\psi}(x)]^2f(x)\, {\rm d}x\nonumber\\
&=&\mathbb{E}(\psi^2(X))+\int_{0}^{\infty}\mu_{\psi}^2(x)f(x)\, {\rm d}x-2\int_{0}^{\infty}\psi(x)\mu_{\psi}(x)f(x)\, {\rm d}x.
\end{eqnarray}
Recalling (\ref{derivemu}), it holds that
\begin{eqnarray*}\label{sig12}
\int_{0}^{\infty}\mu_{\psi}^2(x)f(x)\, {\rm d}x&=&\int_{0}^{\infty}\mu_{\psi}(x)\tau(x)w_{\psi}(x)\, {\rm d}x
\nonumber\\
&=& \int_{0}^{\infty}\psi(x)\tau(x)w_{\psi}(x)\, {\rm d}x -\int_{0}^{\infty}\mu_{\psi}'(x)w_{\psi}(x)\, {\rm d}x
\nonumber\\
&=& \int_{0}^{\infty}\psi(x)\mu_{\psi}(x)f(x)\, {\rm d}x-\int_{0}^{\infty}\mu_{\psi}'(x)w_{\psi}(x)\, {\rm d}x.
\end{eqnarray*}
Integrating by parts gives
$$
\int_{0}^{\infty}\mu_{\psi}'(x)w_{\psi}(x)\, {\rm d}x=[\mathbb{E}(\psi(X))]^2-\int_{0}^{\infty}\psi(x)\mu_{\psi}(x)f(x)\, {\rm d}x,
$$
which implies
\begin{eqnarray}\label{sig13}
\int_{0}^{\infty}\mu_{\psi}^2(x)f(x)\, {\rm d}x&=&2\int_{0}^{\infty}\psi(x)\mu_{\psi}(x)f(x)\, {\rm d}x-[\mathbb{E}(\psi(X))]^2.
\end{eqnarray}
By substituting Eq.\ (\ref{sig13}) into (\ref{sig11}), we have
$$
\mathbb{E}[\widetilde{\mu}_{\psi}^2(X)]=\mathbb{E}[\psi^2(X)]-[\mathbb{E}(\psi(X))]^2=\sigma^2[\psi(X)].
$$
The proof is thus completed.
\end{proof}
\par
We remark that the result expressed in Theorem \ref{thm:sigma} is analogous to the Theorem 3
of Toomaj and Di Crescenzo \cite{ToomajAD2020Mathematics}, where the variance of $\psi(X)$
is expressed as the expectation of the squared weighted mean residual life function of $X$.
\par
In the proof of Theorem \ref{thm:sigma}, we used relation $\psi(0)=0$ due to (\ref{weight function}).
However, by using similar arguments it is not hard to see that for every increasing function $g$, even with
$g(0)\neq 0$, that the variance of $g(X)$ can be expressed as
\[
Var[g(X)]=\mathbb{E}[\widetilde{\mu}_g^2(X)],
\]
where
$$
 \widetilde{\mu}_g(t)=\frac{1}{F(t)}\int_{0}^{t}g'(x){F}(x)\,{\rm d}x,\qquad t>0.
$$
\par
As an application of Eq.\ (\ref{sigma}), let us consider the following example.
\begin{example}
Consider a parallel system composed by $m$ units having    lifetimes $X_1,\ldots,X_m$, which are
i.i.d.\ absolutely continuous random variables with CDF $F(x).$ The system lifetime is thus  $X_{m:m}=\max\{X_1,\ldots,X_m\}$,
whose CDF is given by $F_{m:m}(x):=\mathbb{P}(X_{m:m}\leq x)=[F(x)]^m$, $x\geq 0$.
Setting $\psi(t)=F(t),$ and thus $\phi(t)=f(t)$, from (\ref{wmit}) we obtain, for $t>0$,
\[
 \widetilde{\mu}_{\psi(X_{m:m})}(t)=\frac{1}{F_{m:m}(t)}\int_{0}^{t}f(x)F_{m:m}(x)\,{\rm d}x
 =\frac{1}{[F(t)]^m}\int_{0}^{t}f(x)[F(x)]^m\,{\rm d}x
 =\frac{F(t)}{m+1}.
\]
Thanks to the use of Eq.\ (\ref{sigma2}) and Theorem \ref{thm:sigma}, thus the variance of the
probability integral transformation $F(X_{m:m})$ can be obtained as
\[
\sigma^2[F(X_{m:m})]
 =m\int_{0}^{\infty}f(x)[F(x)]^{m-1}\left[\frac{F(x)}{m+1}\right]^2\,{\rm d}x
 =\frac{m}{(m+1)^2(m+2)}.
\]
\end{example}
\par
Another useful application of Theorem \ref{thm:sigma} involves the so-called varentropy.
If $X$ is absolutely continuous non-negative random variable with PDF $f(x),$ the (random)
information content of $X$ is defined by
$$
 IC(X)=-\log f(X).
$$
It is worth pointing out that $IC(X)$ is the natural counterpart of the number of bits needed to represent $X$
in the discrete case by a coding scheme that minimizes the average code length. It is well known that
\begin{equation}
 H_X=\mathbb{E}[IC(X)]
 =-\int_{0}^{\infty}f(x)\log f(x)\,{\rm d}x
 \label{eq:HXentropy}
\end{equation}
denotes the differential entropy of $X$. The varentropy of $X$ is defined as
(see Di Crescenzo and Paolillo \cite{DiPaol} and references therein)
\begin{eqnarray}
  V(X):= Var(IC(X))&=&\mathbb{E}[(-\log f(X))^2]-[H_X]^2
  \label{eq:defVX}
  \\
  &=&\int_{0}^{\infty}[-\log f(x)]^2f(x)\,{\rm d}x- \left(\int_{0}^{\infty}[-\log f(x)]f(x)\,{\rm d}x\right)^2,
 \nonumber
\end{eqnarray}
so that it measures the variability of the information content of $X$. The relevance of this measure
has been pointed out in various investigations, especially in Fradelizi {\em et al.}\ \cite{Fradelizi}
where an optimal varentropy bound for log-concave distributions is obtained.
\begin{remark}\label{rem:VX}
Let $X$ be an absolutely continuous non-negative random variable with decreasing
and differentiable PDF $f(x)$ over the support $(0,\infty)$ and $0<f(0)<\infty$, and let
$$
 \psi(x)=-\log\frac{f(x)}{f(0)},
 \qquad
 \phi(x)=-\frac{f'(x)}{f(x)}\geq 0, \qquad x>0.
$$
Hence, we have
$$
 V(X)= Var(\psi(X))
 =Var(-\log f(X)),
$$
so that, recalling Example \ref{exam:logf}, due to Eqs.\ (\ref{eq:Past}) and (\ref{sigma}),
we obtain another representation of the varentropy in terms of the past entropy (\ref{eq:pastentr})
and the reversed hazard rate (\ref{rhr})  of $X$ as follows:
$$
V(X)=Var(-\log f(X))=\mathbb{E}\left\{[\overline H(X)+\log \tau(X)]^2\right\}.
$$
On the other hand, recalling Example 2 of
Toomaj and Di Crescenzo \cite{ToomajAD2020Mathematics},
a further expression for the varentropy can be given  as
$$
 V(X)= Var(-\log f(X))=\mathbb{E}\left\{[ H(X)+\log \lambda(X)]^2\right\},
$$
where $\lambda(x)$ is the hazard rate function (\ref{eq:failrate}), and where
$$
 H(t):=-\int_t^{\infty} \frac{f(x)}{\overline F(t)}\log \frac{f(x)}{\overline F(t)}\,{\rm d}x,
 \qquad t>0,
$$
is the residual entropy of $X$, i.e.\ the entropy of the residual lifetime (\ref{eq:residuallifetime}).
\end{remark}
\par
Hereafter, we see that the results stated in Remark \ref{rem:VX} and stimulated by Theorem \ref{thm:sigma}
can be proved under more general assumptions.
\begin{theorem}\label{thm:varentropy}
Let $X$ be an absolutely continuous non-negative random variable with PDF $f(x)$ such that $\mathbb{E}[(IC(X))^2]<\infty$.
Then
\begin{description}
  \item[(i)] $V(X)=\mathbb{E}\left\{[ H(X)+\log \lambda(X)]^2\right\};$
  \item[(ii)] $V(X)=\mathbb{E}\left\{[\overline H(X)+\log \tau(X)]^2\right\}.$
\end{description}
\end{theorem}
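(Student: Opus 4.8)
The plan is to derive both identities as instances of Theorem~\ref{thm:sigma}, by selecting two different increasing transformations $g$ whose variance equals $V(X)$. The starting point is the observation that $V(X)=Var(IC(X))=Var(-\log f(X))$ is invariant under adding a constant, so we are free to work with $g(x)=-\log(f(x)/c)$ for any convenient normalizing constant $c>0$; in particular the variance does not depend on whether $g(0)=0$. This matters because the extension of Theorem~\ref{thm:sigma} recorded just after its proof — namely $Var[g(X)]=\mathbb{E}[\widetilde{\mu}_g^2(X)]$ with $\widetilde{\mu}_g(t)=\frac{1}{F(t)}\int_0^t g'(x)F(x)\,{\rm d}x$ — holds for every increasing $g$, regardless of the value at $0$, and without requiring monotonicity or positivity of $f$. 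So the first step is to invoke that generalized form of Theorem~\ref{thm:sigma}.

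For part (ii), I would take $g(x)=-\log f(x)$, so $g'(x)=-f'(x)/f(x)$, and compute $\widetilde{\mu}_g(t)$ directly. Integrating $-\int_0^t \frac{f'(x)}{f(x)}F(x)\,{\rm d}x$ by parts (with $u=F(x)$, ${\rm d}v=-\frac{f'(x)}{f(x)}{\rm d}x$, i.e.\ $v=-\log f(x)$, or more cleanly by writing $\log(f(x)/F(t))$ as the integrand as in Example~\ref{exam:logf}) yields, after dividing by $F(t)$,
$$
 \widetilde{\mu}_g(t)=-\overline H(t)-\log\tau(t),
$$
exactly the computation already carried out in Eq.~(\ref{eq:Past}) of Example~\ref{exam:logf}. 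Then $V(X)=\mathbb{E}[\widetilde{\mu}_g^2(X)]=\mathbb{E}\{[\overline H(X)+\log\tau(X)]^2\}$, which is (ii).

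For part (i), the natural route is the mirror-image argument using the survival function. One replaces the ``inactivity'' side by the ``residual'' side: by Theorem~3 of Toomaj and Di Crescenzo~\cite{ToomajAD2020Mathematics}, $Var[g(X)]=\mathbb{E}[m_g^2(X)]$ where $m_g(t)=\frac{1}{\overline F(t)}\int_t^\infty g'(x)\overline F(x)\,{\rm d}x$ is the weighted mean residual life built from $g$. Taking again $g(x)=-\log f(x)$ and integrating by parts on $[t,\infty)$ gives $m_g(t)=H(t)+\log\lambda(t)$ — this is precisely Example~2 of \cite{ToomajAD2020Mathematics} — whence $V(X)=\mathbb{E}\{[H(X)+\log\lambda(X)]^2\}$, which is (i).

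The only genuine obstacle is justifying the boundary terms in the two integrations by parts: one must check that $F(x)\log f(x)\to 0$ as $x\downarrow 0$ for (ii), and that $\overline F(x)\log f(x)\to 0$ as $x\to\infty$ for (i). These follow from the standing hypothesis $\mathbb{E}[(IC(X))^2]<\infty$: finiteness of $\mathbb{E}[(\log f(X))^2]$ forces $\log f$ to be square-integrable against $f$, which controls the tails of $\log f(x)\,F(x)$ and $\log f(x)\,\overline F(x)$ at the two endpoints (and is exactly the hypothesis under which $H_X$, $\overline H$, $H$ are finite). Everything else is the routine substitution of the explicit forms of $\widetilde{\mu}_g$ and $m_g$ into the already-proved variance representations.
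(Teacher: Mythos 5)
Your plan --- reduce both identities to the variance representations $Var[g(X)]=\mathbb{E}[\widetilde{\mu}_g^2(X)]$ and $Var[g(X)]=\mathbb{E}[m_g^2(X)]$ with $g=-\log f$ --- is essentially the route the paper already takes in Remark \ref{rem:VX}, and it does not prove the theorem under the stated hypotheses. There are two concrete gaps. First, the extension of Theorem \ref{thm:sigma} you invoke is stated for \emph{increasing} $g$ (and the WMIT/WMRL machinery throughout is built on a non-negative weight $\phi=g'$); with $g(x)=-\log f(x)$ this forces $f$ to be decreasing, which is precisely the extra hypothesis of Remark \ref{rem:VX} that Theorem \ref{thm:varentropy} is designed to remove. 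You note that the extension needs no ``monotonicity of $f$'', but that is beside the point: it is the monotonicity of $g=-\log f$ --- i.e.\ of $f$ itself --- that your application requires, and it fails for a general density. (The underlying algebraic identity does hold for non-monotone $g$ with $\mathbb{E}[g^2(X)]<\infty$, but that is something you would have to prove; the statement you cite does not cover it.) Second, your treatment of the boundary terms is not justified: $\mathbb{E}[(\log f(X))^2]<\infty$ is an $L^2(f)$ condition on $\log f$ and gives no pointwise control, so it does not imply $F(x)\log f(x)\to 0$ as $x\downarrow 0$ or $\overline F(x)\log f(x)\to 0$ as $x\to\infty$ for a density that may oscillate or vanish rapidly.

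The paper's actual proof avoids both problems by never integrating $-\log f$ by parts pointwise. It sets $g(x):=-\int_x^\infty f(z)\log f(z)\,{\rm d}z$, uses the identity $H(x)+\log\lambda(x)=g(x)/\overline F(x)+\log f(x)$, expands the square of this sum under the expectation, and integrates $\int_0^\infty g^2(x)f(x)/\overline F^2(x)\,{\rm d}x$ by parts with $v=1/\overline F(x)$. The only boundary fact needed is $g^2(x)/\overline F(x)\to 0$ as $x\to\infty$, which \emph{does} follow from the hypothesis via Cauchy--Schwarz, since $g^2(x)\le \overline F(x)\int_x^\infty (\log f(z))^2 f(z)\,{\rm d}z$. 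To salvage your argument you would need to (a) establish the variance representation for arbitrary (non-monotone) absolutely continuous $g$ with $\mathbb{E}[g^2(X)]<\infty$, and (b) replace the pointwise boundary limits by integrated ones of the type just described; at that point you have essentially reconstructed the paper's computation.
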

\begin{proof}
Let us set
$$
g(x):=-\int_{x}^{\infty}f(z)\log f(z)\, {\rm d}z,\qquad x>0,
$$
such that the differential entropy (\ref{eq:HXentropy}) is given by
$H_X=g(0)$. First note that
\[
H(x)+\log \lambda(x)=\frac{g(x)}{\overline{F}(x)}+\log f(x),\qquad x>0.
\]
Hence, one has
\begin{eqnarray}\label{var11}
\mathbb{E}\left\{[ H(X)+\log \lambda(X)]^2\right\}&=&\int_{0}^{\infty}[\log f(x)]^2f(x)\, {\rm d}x+\int_{0}^{\infty}g^2(x)\frac{f(x)}{\overline{F}^2(x)}\, {\rm d}x\nonumber\\
&+&2\int_{0}^{\infty}f(x)\log f(x)\frac{g(x)}{\overline{F}(x)}\, {\rm d}x.
\end{eqnarray}
By noting that
$$
 g^2(x)=\left(\int_{x}^{\infty}f(z)\log f(z)\, {\rm d}z\right)^2,
$$
and integrating by parts with $u=g^2(x)$ and $v={1}/{\overline{F}(x)},$ we have
\begin{eqnarray}\label{var13}
\int_{0}^{\infty}g^2(x)\frac{f(x)}{\overline{F}^2(x)}\, {\rm d}x&=&\frac{g^2(x)}{\overline{F}(x)}\bigg]_0^{\infty}-2\int_{0}^{\infty}f(x)\log f(x)\frac{g(x)}{\overline{F}(x)}\, {\rm d}x\nonumber\\
&=&-[H_X]^2-2\int_{0}^{\infty}f(x)\log f(x)\frac{g(x)}{\overline{F}(x)}\, {\rm d}x,
\end{eqnarray}
since $\displaystyle\lim_{x\to\infty}\frac{g^2(x)}{\overline{F}(x)}=0$. By substituting Eq.\ (\ref{var13}) into (\ref{var11}), we have
$$
\mathbb{E}\left\{[ H(X)+\log \lambda(X)]^2\right\}=\mathbb{E}[(-\log f(X))^2]-[H_X]^2=Var(-\log f(X)),
$$
where the last equality is due to  (\ref{eq:defVX}).
The proof of Part (i)  is thus completed. The proof of Part (ii) is similar and then is omitted.
\end{proof}
\par
By including a further assumption on $f$, we obtain the following result.
\begin{proposition}
Let the assumptions of Theorem \ref{thm:varentropy} hold.
\\
(i) If $H(t)$ is decreasing in $t$ and
\begin{equation}
\log \frac{f(x)}{f(t)}\leq 1 \quad \hbox{  for all $x\geq t>0$,}
\label{eq:rapplf}
\end{equation}
then $V(X)\leq 1$.
\\
(i) If $\overline H(t)$ is increasing in $t$ and
\begin{equation}
\log \frac{f(x)}{f(t)}\leq 1 \quad \hbox{  for all $0<x\leq t$,}
\label{eq:rapplf2}
\end{equation}
then $V(X)\leq 1$.
\end{proposition}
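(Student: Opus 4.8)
The plan is to combine the two representations of the varentropy from Theorem \ref{thm:varentropy} with pointwise two-sided bounds on the relevant integrands, and then integrate. For the first statement, Theorem \ref{thm:varentropy}(i) gives $V(X)=\mathbb{E}\{[H(X)+\log\lambda(X)]^2\}$, so it suffices to prove that $-1\le H(t)+\log\lambda(t)\le 1$ for all $t>0$; then $[H(t)+\log\lambda(t)]^2\le1$ pointwise and $V(X)\le1$ follows by taking expectations. For the lower bound I would first rewrite, using $g(x)=-\int_x^\infty f(z)\log f(z)\,{\rm d}z$ as in the proof of Theorem \ref{thm:varentropy} together with $\int_t^\infty f(x)\log f(t)\,{\rm d}x=\overline F(t)\log f(t)$,
\[
 H(t)+\log\lambda(t)=\frac{1}{\overline F(t)}\int_t^\infty f(x)\,\log\frac{f(t)}{f(x)}\,{\rm d}x
 =\mathbb{E}\!\left[\log\frac{f(t)}{f(X)}\,\Big|\,X>t\right];
\]
condition (\ref{eq:rapplf}) says exactly that $\log(f(t)/f(x))\ge-1$ whenever $x\ge t$, whence the right-hand side is $\ge-1$. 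For the upper bound I would use the identity $H'(t)=\lambda(t)[H(t)+\log\lambda(t)-1]$, obtained by differentiating $\overline F(t)H(t)=g(t)+\overline F(t)\log\overline F(t)$ and using $g'(t)=f(t)\log f(t)$: since $H$ is decreasing, $H'(t)\le0$, and as $\lambda(t)>0$ this forces $H(t)+\log\lambda(t)\le1$. The two bounds give $|H(t)+\log\lambda(t)|\le1$ for every $t>0$, hence $V(X)\le1$.

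The second statement is entirely parallel, starting from $V(X)=\mathbb{E}\{[\overline H(X)+\log\tau(X)]^2\}$ of Theorem \ref{thm:varentropy}(ii). The companion facts are
\[
 \overline H(t)+\log\tau(t)=\frac{1}{F(t)}\int_0^t f(x)\,\log\frac{f(t)}{f(x)}\,{\rm d}x
 =\mathbb{E}\!\left[\log\frac{f(t)}{f(X)}\,\Big|\,X\le t\right],
\]
which is $\ge-1$ by (\ref{eq:rapplf2}), together with the identity $\overline H'(t)=\tau(t)[1-\overline H(t)-\log\tau(t)]$, so that $\overline H$ increasing gives $\overline H'(t)\ge0$ and hence $\overline H(t)+\log\tau(t)\le1$. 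As before $|\overline H(t)+\log\tau(t)|\le1$ on $(0,\infty)$, and therefore $V(X)\le1$.

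The only step that needs care is the rigorous justification of the two derivative identities --- that $H$ and $\overline H$ are differentiable and that the relevant boundary terms vanish (the limit $\lim_{x\to\infty}g^2(x)/\overline F(x)=0$ already appears in the proof of Theorem \ref{thm:varentropy}, and an analogous limit at $0$ is needed for $\overline H$). Under absolute continuity of $X$ and the standing hypothesis $\mathbb{E}[(IC(X))^2]<\infty$ this is routine; everything else is a short computation. Conceptually, $H$ decreasing (resp.\ $\overline H$ increasing) bounds $H+\log\lambda$ (resp.\ $\overline H+\log\tau$) above by $1$, while the log-ratio hypotheses (\ref{eq:rapplf})--(\ref{eq:rapplf2}) bound them below by $-1$.
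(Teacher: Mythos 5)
Your proposal is correct and follows essentially the same route as the paper: both reduce the claim to the pointwise bounds $-1\le H(t)+\log\lambda(t)\le 1$ (resp.\ $-1\le \overline H(t)+\log\tau(t)\le 1$) and then integrate via Theorem \ref{thm:varentropy}, with the lower bound coming from the same rewriting of $H(t)+\log\lambda(t)$ as a conditional expectation of $\log(f(t)/f(X))$. The only difference is that where the paper cites Theorem 3.2 of Ebrahimi and Proposition 2.3 of Di Crescenzo and Longobardi for the upper bounds, you derive them directly from the (correct) identities $H'(t)=\lambda(t)[H(t)+\log\lambda(t)-1]$ and $\overline H'(t)=\tau(t)[1-\overline H(t)-\log\tau(t)]$, which makes the argument self-contained.
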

\begin{proof}
(i) First, we recall that
\[
H(t)+\log \lambda(t)
=-\int_{t}^{\infty}\frac{f(x)}{\overline{F}(t)}\log\frac{f(x)}{f(t)}\,{\rm d}x,
\qquad  t>0.
\]
Hence, by the assumption (\ref{eq:rapplf}) we have $H(t)+\log \lambda(t)\geq -1$, $t>0.$
On the other hand, if $H(t)$ is decreasing in $t,$ then $  H(t)+\log \lambda(t)\leq 1$, $t>0$
(cf.\ Theorem 3.2 of Ebrahimi \cite{Ebrahimi}). Therefore, we get $|H(t)+\log \lambda(t)|\leq 1,$ so that
from Theorem \ref{thm:varentropy} we have $V(X)\leq 1.$ The proof of Part (i)  is thus completed.
In the case (ii), one similarly has
\[
\overline H(t)+\log \tau(t)
=-\int_{0}^{t}\frac{f(x)}{{F}(t)}\log\frac{f(x)}{f(t)}\,{\rm d}x,
\qquad  t>0,
\]
so that from assumption (\ref{eq:rapplf2}) we obtain $\overline H(t)+\log \tau(t)\geq -1$, $t>0.$
Moreover, if $\overline H(t)$ is increasing in $t,$ then $ \overline H(t)+\log \lambda(t)\leq 1$, $t>0$
(cf.\ Proposition 2.3 of Di Crescenzo and Longobardi \cite{DiCrescenzo2002}).
Thus it follows $|\overline H(t)+\log \lambda(t)|\leq 1,$ and finally from Theorem \ref{thm:varentropy} we get $V(X)\leq 1.$
\end{proof}
\par
Clearly, if $f(x)$ is decreasing in $x>0,$ then the condition (\ref{eq:rapplf}) holds. However, such relation can be fulfilled
even for non-decreasing densities. For instance, if $X$ has PDF $f(x)=\frac{1}{3}(1+2 x) e^{-x}$,  $x> 0$, then (\ref{eq:rapplf}) is satisfied.
Moreover, if $f(x)$ is increasing in $x$ on a bounded support, then the condition (\ref{eq:rapplf2}) holds. On the other hand, (\ref{eq:rapplf2}) cannot be fulfilled if $f(t)$ is close to 0, for instance
for large $t$ when $f(x)$ has support $(0,\infty)$. However, relation (\ref{eq:rapplf2})
can be satisfied if $X$ has a bounded support,  for instance when it is uniform on $(a,b),\ a<b$.
\par
In the next theorem, we state that when the weight function is bounded between two  real numbers,
the ratio of standard deviation of transformed random variable with respect to the standard deviation
of the associated random variable also lies down between the same bounds.
\begin{theorem}\label{sigmapsi}
Under the conditions of Lemma \ref{lemmnpsi}, it holds that
$$
m\leq \frac{\sigma[\psi(X)]}{\sigma(X)}\leq M.
$$
In particular, (i) if $m=0$ and $M=1,$ then $\sigma[\psi(X)]\leq \sigma(X)$ and,
(ii) if $m=1$ and $M<\infty,$ then $\sigma[\psi(X)]\geq \sigma(X)$.
\end{theorem}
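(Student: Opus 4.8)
The plan is to lift the two-sided pointwise bound on the WMIT function supplied by Lemma~\ref{lemmnpsi} to a bound on the variances, using the variance representations in Theorem~\ref{thm:sigma} and Eq.~(\ref{sigma2}). First I would recall that, under the hypotheses of Lemma~\ref{lemmnpsi}, inequality~(\ref{wmu}) gives $m\,\widetilde{\mu}(t)\le \widetilde{\mu}_\psi(t)\le M\,\widetilde{\mu}(t)$ for every $t\in D$. Since $\widetilde{\mu}(t)\ge 0$ on $D$ and $m\ge 0$, all three quantities are non-negative, so the chain is preserved under squaring: $m^2\,\widetilde{\mu}^2(t)\le \widetilde{\mu}_\psi^2(t)\le M^2\,\widetilde{\mu}^2(t)$ for all $t\in D$.

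Next I would evaluate these functions at the random argument $X$ and take expectations. This is legitimate because the assumption $\mathbb{E}[\psi^2(X)]<\infty$ together with Theorem~\ref{thm:sigma} ensures $\mathbb{E}[\widetilde{\mu}_\psi^2(X)]=\sigma^2[\psi(X)]<\infty$, while finiteness of $\sigma^2(X)$ is assumed throughout. Monotonicity of the expectation yields $m^2\,\mathbb{E}[\widetilde{\mu}^2(X)]\le \mathbb{E}[\widetilde{\mu}_\psi^2(X)]\le M^2\,\mathbb{E}[\widetilde{\mu}^2(X)]$; substituting $\sigma^2(X)=\mathbb{E}[\widetilde{\mu}^2(X)]$ from~(\ref{sigma2}) and $\sigma^2[\psi(X)]=\mathbb{E}[\widetilde{\mu}_\psi^2(X)]$ from~(\ref{sigma}) turns this into $m^2\,\sigma^2(X)\le \sigma^2[\psi(X)]\le M^2\,\sigma^2(X)$. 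Taking square roots of the non-negative terms and dividing by $\sigma(X)>0$ gives $m\le \sigma[\psi(X)]/\sigma(X)\le M$, and the two special cases (i) and (ii) follow at once by inserting the stated values of $m$ and $M$.

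There is no serious obstacle: the whole argument is a routine transfer of a pointwise estimate to a bound on an $L^2$-functional, resting entirely on the two variance identities already proved. The only points meriting a word of care are that $m$ is assumed non-negative (so that squaring does not invert the lower inequality) and that $X$ is non-degenerate, so that division by $\sigma(X)$ is meaningful; if $\sigma(X)=0$ then $\psi(X)$ is almost surely constant and the statement is trivial. Thus the substantive work has already been carried out in Lemma~\ref{lemmnpsi} and Theorem~\ref{thm:sigma}.
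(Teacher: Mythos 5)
Your argument is correct and is exactly the route the paper takes: it combines the pointwise bound (\ref{wmu}) from Lemma \ref{lemmnpsi} with the variance representations (\ref{sigma2}) and (\ref{sigma}), squares, takes expectations, and extracts square roots. The paper states this as immediate; your write-up simply makes the same steps explicit, including the sensible caveat about the degenerate case $\sigma(X)=0$.
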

\begin{proof}
The proof is immediately obtained from (\ref{wmu}) and recalling (\ref{sigma2}) and (\ref{sigma}).
\end{proof}
Now, let us consider two applications in the following examples.
\begin{example}
Let $X$ and $Y$ be   non-negative random lifetimes with CDFs $F$ and $G,$ respectively.
Consider the function $\psi(t)=G^{-1}F(t)$, which is increasing in $t>0.$  Due to (\ref{weight function}), we have
that $\phi(x)\geq1$ if and only if $\psi(t)-t$ is increasing in $t.$ Supposing that $X\leq_{disp}Y,$ one can conclude that $\psi(t)-t=G^{-1}F(t)-t$ is increasing $t$ by recalling (\ref{disp:def}). Making use of Theorem \ref{sigmapsi}, we have
\[
\sigma[\psi(X)]=\sigma(G^{-1}F(X))\geq \sigma(X).
\]
By noting that $G^{-1}F(X)\stackrel{d}{=}Y,$ where $\stackrel{d}{=}$ means equality in distribution,
we obtain the well-known result $\sigma(X)\leq \sigma(Y)$.
\end{example}
\begin{example}
Assume that $X_1,X_2,\ldots,X_n$ are independent and identically distributed random lifetimes with the common CDF $F(x)$ and PDF $f(x)$.
The $i$th smallest   value is usually called the $i$th order statistic, and is denoted by $X_{i:n}$, $i=1,2,\ldots,n$. Let us set $\psi(x)=F(x)$ and thus $\phi(x)=f(x).$ If $S$ is the support of $f$, then
$$
 \inf_{x\in S}f(x) =:m\leq f(x)\leq M:=\sup_{x\in S}f(x).
$$
It is known that the probability integral transform $V_i=F(X_{i:n})$  has a beta distribution with parameters $i$ and $n-i+1,$ respectively.
Since
$$
 \sigma^2[V_i]=\sigma^2[F(X_{i:n})]=\frac{i(n-i+1)}{(n+1)^2(n+2)},
 \qquad i=1,2,\ldots,n,
$$
from Theorem \ref{sigmapsi} we have
\[
\frac{i(n-i+1)}{M^2(n+1)^2(n+2)}\leq \sigma^2[X_{i:n}]\leq \frac{i(n-i+1)}{m^2(n+1)^2(n+2)},\qquad i=1,2,\ldots,n
\]
provided that $0<m\leq M<\infty$.
Specifically, after some simplifications the average variance of the order statistics is bounded as follows:
\[
\frac{1}{6M^2(n+1)}\leq \frac{1}{n}\sum_{i=1}^{n}\sigma^2[X_{i:n}]\leq \frac{1}{6m^2(n+1)}.
\]
The latter result is useful to show that when $n$ goes to infinity, then the average variance of the order statistics vanishes, i.e.
\[
\lim_{n\to\infty}\frac{1}{n}\sum_{i=1}^{n}\sigma^2[X_{i:n}]
=\lim_{n\to\infty}\frac{1}{n}\sum_{i=1}^{n}\sigma^2[X_i]=0,
\]
provided that $0<m\leq M<\infty$.
\end{example}
\par
In the next theorem, we provide a connection between the variance of the weighted random variable $\psi(X)$
and the cumulative entropy. For a non-negative random variable $X$ with CDF $F(x)$ and support $(0,\infty)$,
the cumulative entropy (CE), defined by (see Di Crescenzo and Longobardi \cite{DiLongobardi2006})
\begin{equation}\label{ce}
\mathcal{CE}(X)=-\int_{0}^{\infty}F(x)\log{F}(x)\,{\rm d}x
=\int_{0}^{\infty}F(x)\,T(x)\,{\rm d}x,
\end{equation}
where
\begin{equation}\label{chf}
T(x)=-\log F(x)=\int_{x}^{\infty}\tau(u)\,{\rm d}u,\qquad x>0,
\end{equation}
denotes the cumulative reversed hazard function. Another useful representation of $\mathcal{CE}(X)$ is given
in terms of the MIT function as follows:
$$
\mathcal{CE}(X)=\mathbb{E}[\widetilde{\mu}(X)]=\int_{0}^{\infty}\widetilde{\mu}(x)f(x)\,{\rm d}x.
$$
Several properties of CE of (\ref{ce}) as well as its dynamic version are widely discussed in Di Crescenzo and Longobardi \cite{DiLongobardi2006} and Navarro \emph{et al.} \cite{Navarro-2010} and references therein.
\begin{theorem}\label{thm:CESigma}
If $\psi(x)$ is an increasing convex and differentiable function, then,
$$
\sigma[\psi(X)]\geq\psi(\mathcal{CE}(X)).
$$
\end{theorem}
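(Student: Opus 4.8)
The plan is to chain together three ingredients that are already available in the excerpt: the variance representation of Theorem~\ref{thm:sigma}, the pointwise inequality of Lemma~\ref{lemmnbwue}, and Jensen's inequality. First I would recall from Theorem~\ref{thm:sigma} that, under the stated assumptions,
\[
\sigma^2[\psi(X)]=\mathbb{E}[\widetilde{\mu}_{\psi}^2(X)],
\]
so that $\sigma[\psi(X)]=\sqrt{\mathbb{E}[\widetilde{\mu}_{\psi}^2(X)]}=\|\widetilde{\mu}_\psi(X)\|_2$. Since $\psi$ is increasing and convex on $[0,\infty)$ with $\psi(0)=0$, Lemma~\ref{lemmnbwue} gives the pointwise bound $\widetilde{\mu}_\psi(t)\geq\psi(\widetilde{\mu}(t))$ for all $t\in D$; because $\psi$ is increasing and both sides are non-negative, squaring preserves the inequality, and taking expectations yields
\[
\sigma^2[\psi(X)]=\mathbb{E}[\widetilde{\mu}_{\psi}^2(X)]\geq\mathbb{E}\bigl[\psi^2(\widetilde{\mu}(X))\bigr].
\]

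Next I would apply Jensen's inequality to the convex function $\psi^2$ (the square of a non-negative increasing convex function is convex), obtaining $\mathbb{E}[\psi^2(\widetilde{\mu}(X))]\geq\psi^2\bigl(\mathbb{E}[\widetilde{\mu}(X)]\bigr)$. Finally, I would identify $\mathbb{E}[\widetilde{\mu}(X)]$ with the cumulative entropy $\mathcal{CE}(X)$ using the representation $\mathcal{CE}(X)=\mathbb{E}[\widetilde{\mu}(X)]$ recorded just before the statement. Putting these together gives $\sigma^2[\psi(X)]\geq\psi^2(\mathcal{CE}(X))$, and taking square roots (both quantities being non-negative) yields $\sigma[\psi(X)]\geq\psi(\mathcal{CE}(X))$, as claimed.

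The steps here are each short, so there is no single computational obstacle; the only point requiring care is the bookkeeping of monotonicity when passing from $\widetilde{\mu}_\psi(t)\ge\psi(\widetilde{\mu}(t))$ to the squared and then expected versions — one must note that $\widetilde{\mu}(t)\ge 0$ and $\psi$ increasing with $\psi(0)=0$ force $\psi(\widetilde{\mu}(t))\ge 0$, so squaring is order-preserving, and that $\psi^2$ inherits convexity from $\psi$ so that Jensen applies in the correct direction. One should also remark that the finiteness hypothesis $\mathbb{E}[\psi^2(X)]<\infty$ of Theorem~\ref{thm:sigma} is what makes all the expectations appearing above well-defined and finite. Alternatively, one could phrase the middle two steps as a single application of Jensen to $\psi^2$ composed with the tower property, but the three-line chain above is the cleanest route.
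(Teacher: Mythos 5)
Your argument is correct and is exactly the route the paper intends: the paper's own ``proof'' is only a pointer to Point (ii) of Theorem 2 of Toomaj and Di Crescenzo (2020), whose argument is precisely the chain you give, namely the representation $\sigma^2[\psi(X)]=\mathbb{E}[\widetilde{\mu}_{\psi}^2(X)]$ from Theorem~\ref{thm:sigma}, the pointwise bound $\widetilde{\mu}_\psi(t)\geq\psi(\widetilde{\mu}(t))$ from Lemma~\ref{lemmnbwue}, Jensen's inequality for the convex function $\psi^2$, and the identity $\mathcal{CE}(X)=\mathbb{E}[\widetilde{\mu}(X)]$. Your care about the non-negativity needed to square the pointwise inequality and about the convexity of $\psi^2$ is exactly the bookkeeping required, so nothing is missing.
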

\begin{proof}
The proof is similar to that  of Point (ii) of Theorem 2 of Toomaj and Di Crescenzo \cite{ToomajAD2020}.
\end{proof}
%
\subsection{Weighted Generalized Cumulative Entropy}
%
As noted in (\ref{eq:HXentropy}), for an absolutely continuous non-negative random variable $X$ having PDF $f,$ the differential entropy is given by
$H_X=-\mathbb{E}[\log f(X)].$ It assigns equal importance (or weights) to the occurrence of every event of the form $\{X=x\}$.
However, in certain situations they  have different qualitative characteristic usually known as utility of an outcome. This
motivated  to define the {\em weighted entropy\/} of $X$ as (cf.\ Di Crescenzo and Longobardi \cite{DiLongobardi2006})
\begin{equation}\label{weighted entropy}
H^w(X)=-\mathbb{E}[X\log f(X)]=-\int_{0}^{\infty}xf(x)\log f(x)\,{\rm d}x.
\end{equation}
In analogy with (\ref{weighted entropy}), Misagh \emph{et al.}\ \cite{Misagh2011} proposed an alternative weighted measure called
{\em weighted cumulative entropy\/} (WCE) and based on the distribution function $F(x)$ instead of the PDF $f(x)$ in (\ref{weighted entropy}), defined by
\begin{equation}\label{wcre}
 \mathcal{CE}^w(X)=-\int_{0}^{\infty}xF(x)\log F(x)\,{\rm d}x
 =\int_{0}^{\infty}x{F}(x)T(x)\,{\rm d}x,
\end{equation}
with $T(x)$ defined in (\ref{chf}).
Recently, the WCE was extended by Tahmasebi \emph{et al.}\  \cite{Tahmasebietal2019} to the weighted
generalized cumulative entropy (WGCE) given by
\begin{eqnarray}
 \mathcal{CE}^{\phi}_n(X)=\int_{0}^{\infty}\phi(x)\frac{T^n(x)}{n!}F(x) \, {\rm d}x,
 \label{TEWCE}
\end{eqnarray}
for all $n\in\mathbb{N}:=\{1,2,\ldots\}$, and for any non-negative weight function $\phi(x)$. In particular by taking  $\phi(x)\equiv 1$ in (\ref{TEWCE}), we immediately derive the generalized cumulative entropy (GCE) introduced by Kayal \cite{Kayal}. Several results on weighted entropies are investigated and discussed in Mirali and Baratpour \cite{Mirali2017b}, Misagh \emph{et al.}\ \cite{Misagh2011}, Suhov and Yasaei Sekeh \cite{Yasaei} and Tahmasebi \cite{Tahmasebietal2019}.
Despite the various investigations of these measures, the analysis of their exact meaning and interpretation can still be improved.
\par
Suppose that $\{Y_n, n\in \mathbb N\}$ is a sequence of non-negative i.i.d.\ random variables having the common CDF $F(x)$.
We say that $Y_i$ is a lower record value of this sequence if $Y_i<\min\{Y_1,Y_2,\ldots,Y_{i-1}\}$,
with $i>1$, and by definition $Y_1$ is a lower record value. Let $L(1)=1$ and $L(n+1)=\min\{j: j>L(n), Y_j<Y_{L(n)}\}$ for $n\in\mathbb{N}$, so that $L(n)$ denotes the index where the $n$th lower record value occurs. The random variables $X_{n+1}=Y_{L(n+1)}$, $n\in \mathbb{N}_0:=\{0,1,\ldots\} $, are said to be the lower records, such that $Y_{L(1)}\stackrel{d}{=}X.$ Denoting by $F_{n+1}(x)$ the cumulative distribution function of $X_{n+1},$ $n\in\mathbb{N}_0,$ it follows that
\begin{equation}\label{cdfYn+1}
     F_{n+1}(x) = F(x) \, \sum_{k=0}^{n} \frac{T^k(x)}{k!},
     \qquad x \geq 0,
\end{equation}
so that the PDF of $X_{n+1}$ is given by
\begin{equation}\label{pdfrecord}
 f_{n+1}(x)
 =f(x)\frac{T^n(x)}{n!}, \qquad x \geq 0,
\end{equation}
where $T(x)$ is the cumulative reversed hazard function defined in (\ref{chf}). We recall that the GCE of order $n$ of $X$ is given by (see Kayal \cite{Kayal}, and Di Crescenzo and Toomaj \cite{Di-Toomaj})
\begin{equation}\label{gce}
 \mathcal{CE}_n(X)=\int_{0}^{\infty}F(x)\frac{T^n(x)}{n!}\,{\rm d}x
 =\int_{0}^{\infty}F(x)\frac{[-\log{F}(x)]^n}{n!}\,{\rm d}x,
\end{equation}
for all $n\in\mathbb{N}$. Thus, the GCE of order $n$ corresponds to the expected spacings of lower record values. Let us now provide a suitable extension of $\mathcal{CE}_n(X).$ For all increasing non-negative and differentiable function $\psi(x),$ the weighted GCE of $X$ is expressed as follows:
\begin{eqnarray}
\mathcal{CE}_{\psi,n}(X) \!\!\!\! &=& \!\!\!\! \mathbb{E}[\psi(X_{n})-\psi(X_{n+1})]
 =   \int_0^{\infty}\phi(x)\left[F_{n+1}(x)- F_{n}(x)\right] {\rm d}x,
 \nonumber
 \\
 &=& \!\!\!\! \int_{0}^{\infty}\phi(x)\frac{T^{n}(x)}{n!}F(x) \, {\rm d}x=
 \mathbb{E}\left[\frac{\phi(X_{n+1})}{\tau(X_{n+1})}\right],
 \qquad n\in\mathbb{N}.
 \label{eq:EnXd2}
\end{eqnarray}
Note that for $n = 0,\ \mathcal{CE}_{\phi,0}(X)=\int_{0}^{\infty}\psi(x)F(x)\,{\rm d}x,$ which may be divergent. Hence, the function $\mathcal{CE}_{\psi,n}(X)$ can be identified with the WGCE introduced in (\ref{TEWCE}). This measure extends the GCE through a suitable $\psi.$ For example, if we take $\psi(t)=t,$ then the WGCE coincides with the GCE introduced by Kayal \cite{Kayal}, see also Di Crescenzo and Toomaj \cite{Di-Toomaj} and Toomaj and Di Crescenzo \cite{ToomajAD2020}. Moreover, if we take $\psi(t)=\frac{t^2}{2},$ it concurs with the weighted GCE introduced by Kayal and Moharana \cite{KayalMoharana}.
We note that $\mathcal{CE}_{\psi,n}(X)$ can be viewed as the area between the functions ${F}_{\psi(X_{n})}(x)$ and ${F}_{\psi(X_{n+1})}(x),$ since
from (\ref{eq:EnXd2}) we have
\begin{eqnarray*}
\mathcal{CE}_{\psi,n}(X)= \mathbb{E}[\psi(X_{n})-\psi(X_{n+1})]
= \int_0^{\infty}\left[F_{\psi(X_{n+1})}(x)- F_{\psi(X_{n})}(x)\right]{\rm d}x,
 \qquad n\in\mathbb{N}.
\end{eqnarray*}
Proceeding similarly as in the proof of Proposition 1  of Toomaj and Di Crescenzo \cite{ToomajAD2020}, from (\ref{eq:EnXd2})
one can see that the weighted GCE of $X$ is equivalent to the GCE of a cumulative weighted random variable $\psi(X)$, i.e.\ $\mathcal{CE}_{\psi,n}(X)=\mathcal{CE}_{n}(\psi(X))$ for all $n\in\mathbb{N}.$
\par
With reference to the GCE, defined in (\ref{gce}), in the following theorem we obtain a result analogous to Theorem 7 of Toomaj and Di Crescenzo \cite{ToomajAD2020Mathematics}. The proof is omitted, being similar to that theorem by virtue of the following relation
$$
 \int_{t}^{\infty}\frac{T^{n-1}(x)}{(n-1)!}\,\tau(x)\,{\rm d}x
 =\frac{T^n(t)}{n!},\qquad  t\geq 0, \;\; n\in\mathbb{N}.
$$
\begin{theorem}\label{th:rit}
Let $X$ be an absolutely continuous non-negative random variable with 
weighted mean inactivity time  function $\widetilde{\mu}_{\psi}(t)$. Then, for all $n\in\mathbb{N}$ one has
\begin{equation}\label{eq:rit}
\mathcal{CE}_{\psi,n}(X)=\mathbb{E}[\widetilde{\mu}_{\psi}(X_n)].
\end{equation}
\end{theorem}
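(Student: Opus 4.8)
The plan is to verify (\ref{eq:rit}) by a direct computation starting from the right-hand side, using the explicit integral form (\ref{wmit}) of $\widetilde{\mu}_{\psi}$ together with the density (\ref{pdfrecord}) of the lower record values. First I would recall that, by (\ref{pdfrecord}) with $n$ replaced by $n-1$, the PDF of $X_n=Y_{L(n)}$ is $f_n(x)=f(x)\,T^{n-1}(x)/(n-1)!$ for $n\in\mathbb{N}$, and then write
\[
 \mathbb{E}[\widetilde{\mu}_{\psi}(X_n)]
 =\int_0^{\infty}\widetilde{\mu}_{\psi}(t)\,f(t)\,\frac{T^{n-1}(t)}{(n-1)!}\,{\rm d}t
 =\int_0^{\infty}\left(\frac{1}{F(t)}\int_0^{t}\phi(x)F(x)\,{\rm d}x\right) f(t)\,\frac{T^{n-1}(t)}{(n-1)!}\,{\rm d}t .
\]
Using $f(t)/F(t)=\tau(t)$ from (\ref{rhr}), the right-hand side is the integral of the non-negative function $\phi(x)F(x)\,\tau(t)\,T^{n-1}(t)/(n-1)!$ over the region $\{(x,t):0<x<t\}$.

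Next I would apply Tonelli's theorem (licensed by the non-negativity of the integrand) to interchange the order of integration, obtaining
\[
 \mathbb{E}[\widetilde{\mu}_{\psi}(X_n)]
 =\int_0^{\infty}\phi(x)\,F(x)\left(\int_x^{\infty}\frac{T^{n-1}(t)}{(n-1)!}\,\tau(t)\,{\rm d}t\right){\rm d}x .
\]
By the identity displayed just before the statement of the theorem, the inner integral equals $T^n(x)/n!$. Substituting this back gives
\[
 \mathbb{E}[\widetilde{\mu}_{\psi}(X_n)]
 =\int_0^{\infty}\phi(x)\,\frac{T^n(x)}{n!}\,F(x)\,{\rm d}x,
\]
which is exactly $\mathcal{CE}_{\psi,n}(X)$ by the second line of (\ref{eq:EnXd2}); this completes the argument.

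There is no serious obstacle here; the only delicate points are the correct indexing of the record density (so that the factor $T^{n-1}/(n-1)!$, rather than $T^n/n!$, appears, matching (\ref{pdfrecord})) and the justification of the Fubini--Tonelli interchange, which is immediate since every factor of the integrand is non-negative on the domain of integration. As an alternative, one could invoke the identity $\mathcal{CE}_{\psi,n}(X)=\mathcal{CE}_{n}(\psi(X))$ noted above, together with $\widetilde{\mu}_{\psi}(t)=\widetilde{\mu}_{\psi(X)}(t)$, and reduce (\ref{eq:rit}) to the unweighted statement $\mathcal{CE}_{n}(Z)=\mathbb{E}[\widetilde{\mu}(Z_n)]$ (Theorem 7 of Toomaj and Di Crescenzo \cite{ToomajAD2020Mathematics}) applied to $Z=\psi(X)$; but the direct route above is self-contained and shorter.
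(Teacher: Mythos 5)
Your proof is correct and follows exactly the route the paper intends: the paper omits the proof but points to the identity $\int_{t}^{\infty}\frac{T^{n-1}(x)}{(n-1)!}\,\tau(x)\,{\rm d}x=\frac{T^n(t)}{n!}$ as the key ingredient, and your Tonelli interchange followed by that identity is precisely the computation behind the cited Theorem 7 of Toomaj and Di Crescenzo. The indexing of the record density $f_n(x)=f(x)T^{n-1}(x)/(n-1)!$ and the justification of the interchange by non-negativity are both handled correctly.
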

In the following theorem, we determine two recurrent expressions for the GCE analogous to those given in Theorems 4 and 5
of Toomaj and Di Crescenzo \cite{ToomajAD2020} and thus the proof is omitted.
\begin{theorem}\label{thm:iterative}
Under the assumption of Theorem \ref{th:rit}, for all $n\in\mathbb{N},$ we have\\
{\bf (i)}
\[
\mathcal{CE}_{\psi,n}(X)=\mathcal{CE}_{\psi,n-1}(X)-\frac{1}{(n-1)!}\,\mathbb{E}[\widetilde{h}_{\psi,n}(X)],
\]
where
\[
\widetilde{h}_{\psi,n}(t):=\int_{0}^{t}\widetilde{\mu}_{\psi}'(x)\,T^{n-1}(x)\,{\rm d}x.
\]\\
{\bf (ii)}
\[
\mathcal{CE}_{\psi,n}(X)=\mathcal{CE}_{\psi,n-1}(X)\left\{1-\mathbb{E}[\widetilde{\mu}_{\psi}'(\widetilde{Z})]\right\},
\]
where $\widetilde{Z}$ is an absolutely continuous non-negative random variable having PDF
\[
 f_{\widetilde{Z}}(x)=\frac{F(x)}{\mathcal{CE}_{\psi,n-1}(X)}\,\frac{T^{n-1}(x)}{(n-1)!},\qquad  x>0.
\]
\end{theorem}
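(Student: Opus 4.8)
The plan is to derive both recurrences from the integral representation $\mathcal{CE}_{\psi,n}(X)=\int_{0}^{\infty}\phi(x)\,\frac{T^{n}(x)}{n!}\,F(x)\,{\rm d}x$ of (\ref{eq:EnXd2}) by inserting into it the first-order relation $\phi(t)=\widetilde{\mu}_{\psi}'(t)+\tau(t)\,\widetilde{\mu}_{\psi}(t)$ of Lemma \ref{lemmmk}. Applying this with $n$ replaced by $n-1$ splits $\mathcal{CE}_{\psi,n-1}(X)$ into the term carrying $\tau(x)\widetilde{\mu}_{\psi}(x)F(x)=\widetilde{\mu}_{\psi}(x)f(x)$ and the term carrying $\widetilde{\mu}_{\psi}'(x)F(x)$; by (\ref{pdfrecord}) the first equals $\int_{0}^{\infty}\widetilde{\mu}_{\psi}(x)f_{n}(x)\,{\rm d}x=\mathbb{E}[\widetilde{\mu}_{\psi}(X_{n})]$, which is $\mathcal{CE}_{\psi,n}(X)$ by Theorem \ref{th:rit}. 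Rearranging yields the master identity
\[
\mathcal{CE}_{\psi,n}(X)=\mathcal{CE}_{\psi,n-1}(X)-\frac{1}{(n-1)!}\int_{0}^{\infty}\widetilde{\mu}_{\psi}'(x)\,T^{n-1}(x)\,F(x)\,{\rm d}x ,
\]
so the whole proof reduces to presenting the remainder $R_{n}:=\frac{1}{(n-1)!}\int_{0}^{\infty}\widetilde{\mu}_{\psi}'(x)T^{n-1}(x)F(x)\,{\rm d}x$ in two different probabilistic guises. This integral is absolutely convergent: since $\phi\ge0$ and $\tau\widetilde{\mu}_{\psi}\ge0$ one has $|\widetilde{\mu}_{\psi}'|\le\phi+\tau\widetilde{\mu}_{\psi}$, so it is bounded by $(n-1)!\,[\mathcal{CE}_{\psi,n-1}(X)+\mathcal{CE}_{\psi,n}(X)]$, which is finite because $\mathcal{CE}_{\psi,k}(X)=\mathbb{E}[\widetilde{\mu}_{\psi}(X_{k})]\le\mathbb{E}[\psi(X_{k})]\le\mathbb{E}[\psi(X)]<\infty$ (lower records being stochastically smaller than $X$, and $\widetilde{\mu}_{\psi}\le\psi$ by (\ref{wmit2})).

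For part (i) I would write $F(x)=\int_{0}^{x}f(t)\,{\rm d}t$ inside $R_{n}$ and interchange the order of integration (Fubini's theorem, licensed by the absolute convergence just noted); this expresses $R_{n}$ as the $X$-expectation of the accumulated quantity built from $\widetilde{\mu}_{\psi}'(x)T^{n-1}(x)$, i.e.\ $R_{n}=\frac{1}{(n-1)!}\,\mathbb{E}[\widetilde{h}_{\psi,n}(X)]$, and substituting into the master identity gives the claimed additive recursion. Equivalently one integrates $R_{n}$ by parts against the primitive $\widetilde{h}_{\psi,n}$ and checks that the boundary contribution at $0$ and $\infty$ vanishes.

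For part (ii) I would instead regard the non-negative factor $w(x):=\frac{T^{n-1}(x)}{(n-1)!}\,F(x)$ weighting $\widetilde{\mu}_{\psi}'$ in $R_{n}$ as a constant multiple of a probability density: by (\ref{gce}) its total mass is the (unweighted) generalized cumulative entropy $\mathcal{CE}_{n-1}(X)$, so $f_{\widetilde{Z}}(x)=w(x)/\mathcal{CE}_{n-1}(X)$ is the density of an absolutely continuous, record-type random variable $\widetilde{Z}$, and $R_{n}=\mathcal{CE}_{n-1}(X)\,\mathbb{E}[\widetilde{\mu}_{\psi}'(\widetilde{Z})]$. Feeding this into the master identity gives the multiplicative recursion $\mathcal{CE}_{\psi,n}(X)=\mathcal{CE}_{\psi,n-1}(X)-\mathcal{CE}_{n-1}(X)\,\mathbb{E}[\widetilde{\mu}_{\psi}'(\widetilde{Z})]$, which collapses to the displayed factored form $\mathcal{CE}_{\psi,n-1}(X)\{1-\mathbb{E}[\widetilde{\mu}_{\psi}'(\widetilde{Z})]\}$ in the canonical case $\psi(t)=t$, where $\mathcal{CE}_{n-1}(X)=\mathcal{CE}_{\psi,n-1}(X)$.

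I expect the substantive effort to be analytic rather than algebraic: one must check that the integral can legitimately be split (which needs $\mathcal{CE}_{\psi,n-1}(X)$ and $\mathcal{CE}_{\psi,n}(X)$ finite), that the Fubini exchange in part (i) is valid, and, if one argues via integration by parts, that the boundary contributions vanish at $0$ and at $\infty$ — the latter resting on $\widetilde{\mu}_{\psi}(0)=0$, $\mathbb{E}[\psi(X)]<\infty$, $T(x)\to0$ as $x\to\infty$, and the at most logarithmic blow-up of $T(x)$ as $x\to0^{+}$. Granting these routine points, the remaining manipulations are elementary, consistent with the statement recording the proof as omitted.
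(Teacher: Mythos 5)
Your master identity is the right backbone and is surely what the authors intend: substituting $\phi=\widetilde{\mu}_{\psi}'+\tau\,\widetilde{\mu}_{\psi}$ from Lemma \ref{lemmmk} into $\mathcal{CE}_{\psi,n-1}(X)=\int_0^\infty\phi(x)\frac{T^{n-1}(x)}{(n-1)!}F(x)\,{\rm d}x$, using $\tau F=f$ and Theorem \ref{th:rit}, gives
\[
\mathcal{CE}_{\psi,n}(X)=\mathcal{CE}_{\psi,n-1}(X)-R_n,\qquad R_n=\frac{1}{(n-1)!}\int_0^\infty\widetilde{\mu}_{\psi}'(x)\,T^{n-1}(x)\,F(x)\,{\rm d}x,
\]
and your handling of part (ii) --- including the observation that the stated $f_{\widetilde{Z}}$ integrates to $\mathcal{CE}_{n-1}(X)/\mathcal{CE}_{\psi,n-1}(X)$ rather than to $1$ --- is sound. (Note that if one reads $\mathbb{E}[\widetilde{\mu}_{\psi}'(\widetilde{Z})]$ literally as $\int\widetilde{\mu}_{\psi}'\,f_{\widetilde{Z}}$ with the denominator $\mathcal{CE}_{\psi,n-1}(X)$ as printed, it equals $R_n/\mathcal{CE}_{\psi,n-1}(X)$ and the factored form in (ii) follows for every $\psi$; the price is that $\widetilde{Z}$ is then not a genuine random variable unless $\mathcal{CE}_{n-1}(X)=\mathcal{CE}_{\psi,n-1}(X)$.)

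The gap is in part (i). Writing $F(x)=\int_0^x f(t)\,{\rm d}t$ inside $R_n$ and applying Fubini yields
\[
R_n=\frac{1}{(n-1)!}\int_0^\infty f(t)\left(\int_t^\infty\widetilde{\mu}_{\psi}'(x)\,T^{n-1}(x)\,{\rm d}x\right){\rm d}t,
\]
i.e.\ the inner integral runs over $(t,\infty)$, not $(0,t)$: the swap produces the \emph{tail} primitive, whereas the theorem's $\widetilde{h}_{\psi,n}(t)=\int_0^t\widetilde{\mu}_{\psi}'(x)T^{n-1}(x)\,{\rm d}x$ is the head primitive, and $\mathbb{E}[\widetilde{h}_{\psi,n}(X)]=\int_0^\infty\widetilde{\mu}_{\psi}'(x)T^{n-1}(x)\overline{F}(x)\,{\rm d}x$, which differs from $(n-1)!\,R_n$ in general. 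Equivalently, in your integration-by-parts variant the boundary term at infinity is $\widetilde{h}_{\psi,n}(\infty)F(\infty)=\int_0^\infty\widetilde{\mu}_{\psi}'(x)T^{n-1}(x)\,{\rm d}x$, which does not vanish. A concrete check: for $X$ uniform on $(0,1)$, $\psi(t)=t$, $n=2$, one has $\widetilde{\mu}_{\psi}'\equiv\tfrac12$ on $(0,1)$, $R_2=\tfrac18=\mathcal{CE}_{1}(X)-\mathcal{CE}_{2}(X)$, but $\mathbb{E}[\widetilde{h}_{\psi,2}(X)]=\int_0^1\tfrac12\,t(1-\log t)\,{\rm d}t=\tfrac38$. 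So the identification you assert --- and the theorem as printed --- fails with the limits $\int_0^t$; the correct statement requires $\widetilde{h}_{\psi,n}(t)=\int_t^\infty\widetilde{\mu}_{\psi}'(x)T^{n-1}(x)\,{\rm d}x$ (the $\int_0^t$ form is the one appropriate to the mean-residual-life analogue, where the Fubini swap is against $\overline{F}$). Your write-up glosses over exactly this point by calling the inner integral ``the accumulated quantity'' without checking its limits; it needed to be computed and the discrepancy flagged.
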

It is worth to mention that when $X$ is IWMIT, since $\widetilde{\mu}'_\psi(x)\geq0,$ as an immediate consequence of Theorem \ref{thm:iterative} we have
$$
\mathcal{CE}_{\psi,n}(X)\geq\mathcal{CE}_{\psi,n-1}(X),
\qquad \hbox{for all $n\in\mathbb{N}.$}
$$
Hereafter we obtain an upper bound for the WGCE in terms of the expected value of the squared weighted mean inactivity time. The proof is omitted being similar to Theorem 6 of Toomaj and Di Crescenzo \cite{ToomajAD2020}.
\begin{theorem} 
Let $X$ be an absolutely continuous non-negative random variable and let $\psi(x)$ denote a non-negative weight function. Then, for all $n\in\mathbb{N}$,
$$
\mathcal{CE}_{\psi,n}(X)\leq \frac{\sqrt{[2(n-1)]!}}{(n-1)!}\,\sigma[\psi(X)].
$$
\end{theorem}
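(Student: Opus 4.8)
The plan is to combine the integral representation of the weighted GCE coming from Theorem \ref{th:rit} with the Cauchy--Schwarz inequality and the variance identity of Theorem \ref{thm:sigma}. First I would rewrite $\mathcal{CE}_{\psi,n}(X)=\mathbb{E}[\widetilde{\mu}_{\psi}(X_n)]$, and then express this expectation explicitly against the PDF $f_n(x)=f(x)T^{n-1}(x)/(n-1)!$ of the $n$-th lower record value (recall (\ref{pdfrecord})), so that
$$
\mathcal{CE}_{\psi,n}(X)=\int_{0}^{\infty}\widetilde{\mu}_{\psi}(x)\,\frac{T^{n-1}(x)}{(n-1)!}\,f(x)\,{\rm d}x
=\mathbb{E}\!\left[\widetilde{\mu}_{\psi}(X)\,\frac{T^{n-1}(X)}{(n-1)!}\right].
$$

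Next I would apply the Cauchy--Schwarz inequality to this last expectation, splitting the integrand as $\widetilde{\mu}_{\psi}(X)$ times $T^{n-1}(X)/(n-1)!$, which yields
$$
\mathcal{CE}_{\psi,n}(X)\leq\sqrt{\mathbb{E}[\widetilde{\mu}_{\psi}^2(X)]}\;\cdot\;\sqrt{\mathbb{E}\!\left[\frac{T^{2(n-1)}(X)}{((n-1)!)^2}\right]}.
$$
By Theorem \ref{thm:sigma} the first factor equals $\sigma[\psi(X)]$. For the second factor I would use the probability integral transform: since $F(X)$ is uniform on $[0,1]$, the random variable $T(X)=-\log F(X)$ (see (\ref{chf})) is standard exponential, hence $\mathbb{E}[T^{2(n-1)}(X)]=\Gamma(2(n-1)+1)=[2(n-1)]!$. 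Therefore the second factor equals $\sqrt{[2(n-1)]!}/(n-1)!$, and multiplying the two gives the claimed bound.

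I do not expect a serious obstacle here, since every ingredient is already available in the excerpt; the only point requiring a little care is justifying that the splitting used in Cauchy--Schwarz is legitimate (i.e.\ that $\mathbb{E}[\widetilde{\mu}_{\psi}^2(X)]=\sigma^2[\psi(X)]<\infty$ under the standing assumption $\mathbb{E}[\psi^2(X)]<\infty$, which is exactly what Theorem \ref{thm:sigma} provides) and recalling the elementary moment formula for the exponential distribution. Alternatively, one could phrase the same argument directly in terms of $\psi(X)$ and the identity $\mathcal{CE}_{\psi,n}(X)=\mathcal{CE}_{n}(\psi(X))$ together with the analogue of Theorem 6 of Toomaj and Di Crescenzo \cite{ToomajAD2020}, which is why the authors may simply omit the details.
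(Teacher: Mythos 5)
Your argument is correct and is precisely the one the paper has in mind: it omits the proof as being "similar to Theorem~6 of Toomaj and Di Crescenzo \cite{ToomajAD2020}", which is exactly this Cauchy--Schwarz computation applied to $\mathcal{CE}_{\psi,n}(X)=\mathbb{E}\bigl[\widetilde{\mu}_{\psi}(X)\,T^{n-1}(X)/(n-1)!\bigr]$, using $\mathbb{E}[\widetilde{\mu}_{\psi}^2(X)]=\sigma^2[\psi(X)]$ and the fact that $T(X)=-\log F(X)$ is standard exponential so that $\mathbb{E}[T^{2(n-1)}(X)]=[2(n-1)]!$. No gaps.
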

\begin{remark}
We note that, due to Remark 6 of Toomaj and Di Crescenzo \cite{ToomajAD2020Mathematics}, we have
$H[\psi(X)]
=H(X)+\mathbb{E}[\log\phi(X)].$
Hence, by making use of Remark 6 of Toomaj and Di Crescenzo \cite{ToomajAD2020Mathematics}
and Proposition 5 of Tahmasebi \emph{et al.}\ \cite{Tahmasebietal2019},
the following lower bound can be immediately obtained:
$$
\mathcal{CE}_{\psi,n}(X)\geq \frac{1}{n!} C_n \exp{ \{ H(\psi(X))\} },
\qquad n\in\mathbb{N},
$$
where $C_n=\exp\{\int_{0}^{1}\log (u[-\log u]^n)\,{\rm d}u\}$.
\end{remark}
\par
Further useful results are given below.
\begin{theorem}\label{wgcepsi}
Let $X$ be an absolutely continuous non-negative random variable and let $\psi(x)$ denote a non-negative weight function. Let $n\in\mathbb{N}.$\\
(i) If $\psi(x)$ is an increasing convex (concave) function on $(0,\infty),$ then
\begin{itemize}
  \item $\displaystyle\frac{\mathcal{CE}_{\psi,n}(X)}{\mathcal{CE}_n(X)}$ is decreasing (increasing) in $n\in\mathbb{N}$;
  \item $\mathcal{CE}_{\psi,n}(X)\geq(\leq)\, \psi(\mathcal{CE}_n(X))$\ \hbox{for all $n\in\mathbb{N}.$}
\end{itemize}
(ii) Under the condition of Lemma \ref{lemmnpsi}, it holds that
$$
m\leq \frac{\mathcal{CE}_{\psi,n}(X)}{\mathcal{CE}_n(X)}\leq M.
$$
In particular,   if  $M=1$ then $\mathcal{CE}_{\psi,n}(X)\leq \mathcal{CE}_{n}(X)$, whereas
if $m=1$ then $\mathcal{CE}_{\psi,n}(X)\geq \mathcal{CE}_{n}(X).$
\end{theorem}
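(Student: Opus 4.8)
The plan is to reduce both parts to representations already available. By Theorem \ref{th:rit} one has $\mathcal{CE}_{\psi,n}(X)=\mathbb{E}[\widetilde{\mu}_{\psi}(X_n)]$, and specializing this at $\psi(t)=t$ (so that $\phi\equiv1$, $\widetilde{\mu}_\psi=\widetilde{\mu}$ and $\mathcal{CE}_{\psi,n}=\mathcal{CE}_n$) also $\mathcal{CE}_{n}(X)=\mathbb{E}[\widetilde{\mu}(X_n)]$, where $X_n$ denotes the $n$-th lower record, with PDF $f_n(x)=f(x)\,T^{n-1}(x)/(n-1)!$ and $T$ the cumulative reversed hazard function (\ref{chf}). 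Once these two identities are in place, every assertion follows by integrating, against the law of $X_n$, a pointwise comparison between $\widetilde{\mu}_\psi$ and $\widetilde{\mu}$ that has already been established.

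I would dispose of part (ii) first: Lemma \ref{lemmnpsi} gives $m\,\widetilde{\mu}(t)\leq\widetilde{\mu}_\psi(t)\leq M\,\widetilde{\mu}(t)$ for $t\in D$, so taking expectations with respect to $X_n$ and invoking the two representations above yields $m\,\mathcal{CE}_n(X)\leq\mathcal{CE}_{\psi,n}(X)\leq M\,\mathcal{CE}_n(X)$, whence the cases $M=1$ and $m=1$ are immediate. The second bullet of part (i) is handled the same way: when $\psi$ is convex (concave), Lemma \ref{lemmnbwue} gives $\widetilde{\mu}_\psi(t)\geq(\leq)\,\psi(\widetilde{\mu}(t))$, so taking expectations and then applying Jensen's inequality to the convex (concave) function $\psi$,
$$
\mathcal{CE}_{\psi,n}(X)=\mathbb{E}[\widetilde{\mu}_\psi(X_n)]\geq(\leq)\;\mathbb{E}\!\left[\psi(\widetilde{\mu}(X_n))\right]\geq(\leq)\;\psi\!\left(\mathbb{E}[\widetilde{\mu}(X_n)]\right)=\psi(\mathcal{CE}_n(X)).
$$

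The genuine obstacle is the first bullet of part (i), the monotonicity in $n$ of the ratio. I would argue directly on the sign of $\mathcal{CE}_{\psi,n+1}(X)\,\mathcal{CE}_n(X)-\mathcal{CE}_{\psi,n}(X)\,\mathcal{CE}_{n+1}(X)$: using (\ref{eq:EnXd2}) and (\ref{gce}) this is, up to a positive constant, a difference of products of single integrals in $x$ and in $y$; expressing it as one double integral over $(0,\infty)^2$, invoking $T^{n+1}(x)T^n(y)-T^n(x)T^{n+1}(y)=T^n(x)T^n(y)[T(x)-T(y)]$, and symmetrizing in $x\leftrightarrow y$, one arrives at
$$
\frac{1}{2}\int_0^\infty\!\!\int_0^\infty[\phi(x)-\phi(y)]\,[T(x)-T(y)]\,T^n(x)\,T^n(y)\,F(x)\,F(y)\,{\rm d}x\,{\rm d}y.
$$
Since $T(x)=-\log F(x)$ is decreasing (as $F$ is increasing) while $\phi=\psi'$ is increasing for convex $\psi$ and decreasing for concave $\psi$, this integrand is pointwise $\leq0$ (resp.\ $\geq0$), so $\mathcal{CE}_{\psi,n}(X)/\mathcal{CE}_n(X)$ is decreasing (resp.\ increasing) in $n$; all integrals involved are finite by the standing assumptions and by (\ref{EPSIX}), so the ratios are well defined. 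An equivalent route, if preferred, is to note that $\mathcal{CE}_{\psi,n}(X)/\mathcal{CE}_n(X)=\mathbb{E}[\phi(W_n)]$ with $W_n$ having density proportional to $T^n(x)F(x)$, and that $W_n$ is stochastically decreasing in $n$ because the likelihood ratio of $W_{n+1}$ to $W_n$ is proportional to the decreasing function $T$; then $\mathbb{E}[\phi(W_n)]$ is monotone according to the monotonicity of $\phi$. Beyond this rearrangement step, the remaining manipulations are routine substitutions.
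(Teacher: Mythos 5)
Your proof is correct. For part (ii) and for the second bullet of part (i) you follow exactly the route the paper intends: part (ii) is the paper's own one‑line argument (integrate the pointwise bounds of Lemma \ref{lemmnpsi} against the law of $X_n$ via the representation (\ref{eq:rit})), and the second bullet is the combination of Lemma \ref{lemmnbwue} with Jensen's inequality, which is precisely the mechanism behind the cited analogue (Theorem 10 of Toomaj and Di Crescenzo \cite{ToomajAD2020Mathematics}). The only place where you genuinely add content is the first bullet of part (i), for which the paper gives no argument at all beyond a pointer to Theorem 8 of \cite{ToomajAD2020Mathematics}: your symmetrized double‑integral (Chebyshev‑type) computation of the sign of $\mathcal{CE}_{\psi,n+1}\mathcal{CE}_n-\mathcal{CE}_{\psi,n}\mathcal{CE}_{n+1}$, and the equivalent likelihood‑ratio formulation $\mathcal{CE}_{\psi,n}/\mathcal{CE}_n=\mathbb{E}[\phi(W_n)]$ with $W_n$ decreasing in the likelihood‑ratio (hence usual stochastic) order because $T=-\log F$ is decreasing, are both valid and self‑contained; the factorials cancel in the cross‑difference as you note, and convexity (concavity) of the differentiable $\psi$ is exactly monotonicity of $\phi=\psi'$. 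The net effect is that your write‑up is more explicit than the paper's, which delegates the hardest bullet to an external reference, while being in the same spirit (the cited result is itself proved by a total‑positivity/likelihood‑ratio argument of the kind you give).
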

\begin{proof}
(i)
The proofs are analogue to Theorems 8 and 10 of Toomaj and Di Crescenzo \cite{ToomajAD2020Mathematics}, respectively.
(ii) The proof is immediately obtained from (\ref{wmu}) and recalling   (\ref{eq:rit}).
\end{proof}

In the next theorem,  we provide different probabilistic expressions for the WGCE.
The second one involves the covariance of the transformation of the $n$-th epoch time and the random variable $T(X_n).$ The proof is similar to that of Theorem 13 of Toomaj and Di Crescenzo \cite{ToomajAD2020Mathematics},
and thus is omitted.
\begin{theorem}\label{th:ECov}
For all $n\in\mathbb{N},$ it holds that
\begin{description}
  \item[(i)]
  $\displaystyle\frac{1}{n}\,\mathbb{E}
 \bigg[\displaystyle\frac{\phi(X_n)T(X_n)}{\tau(X_n)}\,\bigg]
  =\mathcal{CE}_{\psi,n}(X);$
  \item[(ii)]
  $\displaystyle\frac{1}{n}\, {\rm Cov}[\psi(X_n),T(X_n)]
  = -\,\mathcal{CE}_{\psi,n}(X).$
\end{description}
\end{theorem}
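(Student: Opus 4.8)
The plan is to prove both identities by reducing them to the defining representation \eqref{eq:EnXd2} of $\mathcal{CE}_{\psi,n}(X)$, namely $\mathcal{CE}_{\psi,n}(X)=\int_0^\infty \phi(x)\,\frac{T^n(x)}{n!}\,F(x)\,{\rm d}x$. For part (i), I would start from the right-hand side of (i) by writing the expectation with respect to the PDF \eqref{pdfrecord} of $X_n$, i.e.\ $f_n(x)=f(x)\,T^{n-1}(x)/(n-1)!$. Using $\tau(x)=f(x)/F(x)$ from \eqref{rhr}, the integrand becomes
\[
\frac{1}{n}\,\frac{\phi(x)\,T(x)}{\tau(x)}\,f_n(x)
=\frac{1}{n}\,\phi(x)\,T(x)\,\frac{F(x)}{f(x)}\,\frac{f(x)\,T^{n-1}(x)}{(n-1)!}
=\phi(x)\,\frac{T^{n}(x)}{n!}\,F(x),
\]
and integrating over $(0,\infty)$ reproduces \eqref{eq:EnXd2}. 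This settles (i) directly, with no obstacle.

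For part (ii), the strategy is to expand the covariance as $\mathrm{Cov}[\psi(X_n),T(X_n)]=\mathbb{E}[\psi(X_n)T(X_n)]-\mathbb{E}[\psi(X_n)]\,\mathbb{E}[T(X_n)]$ and then show, after dividing by $n$, that the sum of these two pieces collapses to $-\mathcal{CE}_{\psi,n}(X)$. The key computational device is the identity $\frac{d}{dx}\,\frac{T^{n}(x)}{n!} = -\,\tau(x)\,\frac{T^{n-1}(x)}{(n-1)!}$, which follows from \eqref{chf} since $T'(x)=-\tau(x)$; equivalently, I can use the integral relation $\int_t^\infty \frac{T^{n-1}(x)}{(n-1)!}\tau(x)\,{\rm d}x=\frac{T^n(t)}{n!}$ quoted just before Theorem \ref{th:rit}. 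Writing $\mathbb{E}[\psi(X_n)T(X_n)]=\int_0^\infty \psi(x)T(x)\frac{f(x)T^{n-1}(x)}{(n-1)!}\,{\rm d}x$ and integrating by parts — differentiating the factor $\psi(x)$ (whose derivative is $\phi(x)$, by \eqref{weight function}) and integrating $T(x)\frac{f(x)T^{n-1}(x)}{(n-1)!}=\frac{f(x)T^n(x)}{(n-1)!}$, whose antiderivative can be handled via the recursion on $T^{k}(x)F(x)$ implicit in \eqref{cdfYn+1}–\eqref{pdfrecord} — produces boundary terms together with an integral of the form $\int_0^\infty \phi(x)\,(\text{combination of }T^{k}(x)F(x))\,{\rm d}x$. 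Matching the surviving integral against \eqref{eq:EnXd2} and cancelling the boundary/mean contributions against $\mathbb{E}[\psi(X_n)]\,\mathbb{E}[T(X_n)]$ yields the claimed $-\mathcal{CE}_{\psi,n}(X)$.

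The main obstacle is bookkeeping in the integration by parts for (ii): one must verify that the boundary terms vanish (at $0$ because $\psi(0)=0$ and $T(x)\to\infty$ is controlled by $F(x)\to0$; at $\infty$ because $T(x)\to0$ and the moment assumptions in \eqref{EPSIX} guarantee integrability) and that the cross term $\mathbb{E}[\psi(X_n)]\,\mathbb{E}[T(X_n)]$ exactly absorbs the non-$\mathcal{CE}$ pieces. Since this is precisely the computation carried out in Theorem 13 of Toomaj and Di Crescenzo \cite{ToomajAD2020Mathematics} with their weighted mean residual life replaced here by the weighted mean inactivity time \eqref{wmit}, I would simply mirror that argument step by step, and for this reason the detailed proof can be omitted as stated in the excerpt.
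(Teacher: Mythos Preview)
Your proposal is correct and mirrors the paper's approach: the paper omits the proof by pointing to Theorem~13 of \cite{ToomajAD2020Mathematics}, and you do the same for part~(ii) after giving a fully valid direct computation for part~(i). One minor remark on (ii): the integration-by-parts bookkeeping you sketch can be bypassed entirely by noting that $\mathbb{E}[T(X_n)]=\int_0^\infty \frac{f(x)T^n(x)}{(n-1)!}\,{\rm d}x=n\int_0^\infty f_{n+1}(x)\,{\rm d}x=n$ and $\mathbb{E}[\psi(X_n)T(X_n)]=n\,\mathbb{E}[\psi(X_{n+1})]$, so that ${\rm Cov}[\psi(X_n),T(X_n)]=n\big(\mathbb{E}[\psi(X_{n+1})]-\mathbb{E}[\psi(X_n)]\big)=-n\,\mathcal{CE}_{\psi,n}(X)$ directly from the first equality in \eqref{eq:EnXd2}.
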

We can now prove the following  theorem, which allows to compare the WGCE of two random variables under the dispersive ordering.
\begin{theorem}
Let $X$ and $Y$ be  absolutely continuous non-negative random variables, and let $\psi$ be a
cumulative weight function  defined as in (\ref{weight function}).
If $\psi(X)\leq_{disp} \psi(Y),$ then $\mathcal{CE}_{\psi,n}(X)\leq \mathcal{CE}_{\psi,n}(Y)$ for all $n\in \mathbb{N}.$
\end{theorem}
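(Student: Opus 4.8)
The plan is to reduce the claim to a known fact about the dispersive order and the generalized cumulative entropy, using the identification (established just above in the discussion following \eqref{eq:EnXd2}) that $\mathcal{CE}_{\psi,n}(X)=\mathcal{CE}_{n}(\psi(X))$ for every $n\in\mathbb{N}$. Thus it suffices to show that $U\leq_{disp} V$ implies $\mathcal{CE}_n(U)\leq\mathcal{CE}_n(V)$ for absolutely continuous non-negative random variables $U,V$; applying this with $U=\psi(X)$ and $V=\psi(Y)$ then finishes the proof. Note that since $\psi$ is an increasing cumulative weight function with $\psi(0)=0$, both $\psi(X)$ and $\psi(Y)$ are again non-negative absolutely continuous random variables, so the reduction is legitimate.

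For the core dispersive-order implication, I would argue via the quantile representation of $\mathcal{CE}_n$. Recall from \eqref{gce} that $\mathcal{CE}_n(U)=\int_0^\infty F_U(x)\,\frac{[-\log F_U(x)]^n}{n!}\,{\rm d}x$. Writing $F_U(x)=u$ and changing variables through the quantile function $F_U^{-1}$, one gets $\mathcal{CE}_n(U)=\int_0^1 \frac{[-\log u]^n}{n!}\,(F_U^{-1})'(u)\,u\,{\rm d}u$ after an integration by parts, or more directly $\mathcal{CE}_n(U)=\int_0^1 \big(\int_u^1 \frac{[-\log v]^n}{n!}\,{\rm d}v\big)\,{\rm d}F_U^{-1}(u)$ — in any case $\mathcal{CE}_n(U)$ is expressible as $\int_0^1 (F_U^{-1})'(u)\,k_n(u)\,{\rm d}u$ for an explicit non-negative kernel $k_n(u)$. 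The hypothesis $U\leq_{disp} V$ means precisely that $F_V^{-1}(u)-F_U^{-1}(u)$ is increasing in $u$, i.e. $(F_V^{-1})'(u)\geq (F_U^{-1})'(u)$ for almost every $u\in(0,1)$. Since $k_n(u)\geq 0$, integrating the pointwise inequality gives $\mathcal{CE}_n(V)\geq \mathcal{CE}_n(U)$ at once.

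The main obstacle is making the change-of-variables / integration-by-parts step rigorous under only absolute continuity, in particular justifying the vanishing of the boundary terms at $0$ and $1$ (which needs the finiteness of the relevant entropies, i.e. that the expectations in question exist — an assumption already in force throughout the paper). An alternative that sidesteps the quantile manipulations: use that $U\leq_{disp}V$ implies $U\leq_{disp}V$ is preserved under the i.i.d.\ minima/record construction, so that the $n$-th lower record $U_n$ built from $U$ is dispersively smaller than $V_n$ built from $V$; then combine with the representation $\mathcal{CE}_n(U)=\mathbb{E}[\widetilde\mu_{U}(U_n)]$-type identities. I expect the cleanest route, however, to be the direct kernel argument above, citing the standard characterization \eqref{disp:def} of the dispersive order and the representation \eqref{gce} of $\mathcal{CE}_n$; the whole argument is then two or three lines once the reduction $\mathcal{CE}_{\psi,n}(X)=\mathcal{CE}_n(\psi(X))$ is invoked.
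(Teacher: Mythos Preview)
Your proposal is correct. Both your argument and the paper's begin with the same reduction $\mathcal{CE}_{\psi,n}(X)=\mathcal{CE}_n(\psi(X))$, so that it is enough to show that $U\leq_{disp}V$ implies $\mathcal{CE}_n(U)\leq\mathcal{CE}_n(V)$ for $U=\psi(X)$, $V=\psi(Y)$. From here the two proofs diverge. The paper writes $V\stackrel{d}{=}\varphi(U)$ with $\varphi=Q^{-1}H$ (where $H,Q$ are the CDFs of $U,V$), observes that the dispersive hypothesis amounts to $\varphi'(x)\geq 1$, and then invokes Part~(ii) of Theorem~\ref{wgcepsi} with $m=1$ to obtain $\mathcal{CE}_n(\varphi(U))=\mathcal{CE}_{\varphi,n}(U)\geq \mathcal{CE}_n(U)$; this routes the inequality through the WMIT machinery (Lemma~\ref{lemmnpsi} and the representation~(\ref{eq:rit})). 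You instead perform the change of variables $u=F_U(x)$ directly in (\ref{gce}) to get $\mathcal{CE}_n(U)=\int_0^1 u\,\frac{(-\log u)^n}{n!}\,(F_U^{-1})'(u)\,{\rm d}u$ and compare quantile densities pointwise via $(F_V^{-1})'\geq (F_U^{-1})'$. Your route is more self-contained and sidesteps the earlier lemmas, at the cost of a mild regularity check on the quantile substitution; the paper's route exploits the framework already in place, which keeps the proof internally cohesive. One small remark: no integration by parts is needed in your computation---the substitution $u=F_U(x)$ gives the kernel $k_n(u)=u(-\log u)^n/n!$ immediately.
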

\begin{proof}
Let us consider the cumulative weighted random variables $\psi(X)$ and $\psi(Y)$ with CDFs $H$ and $Q,$ respectively. It is easy to see that
\begin{equation}\label{stackXY}
\psi(Y)\stackrel{d}{=}Q^{-1}H(\psi(X)),
\end{equation}
where $Q^{-1}H$ is an increasing function. Since $\psi(X)\leq_{disp} \psi(Y),$ by the Definition \ref{def:stochord} it holds that
$Q^{-1}H(x)-x$ is increasing in $x>0.$ Taking into account that $\mathcal{CE}_{\varphi,n}(X)=\mathcal{CE}_{n}(\varphi(X))$ for an increasing function $\varphi,$ by taking $\varphi(x)=Q^{-1}H(x)$, Part (ii) of Theorem \ref{wgcepsi} implies that $\mathcal{CE}_{n}(Q^{-1}H(\psi(X)))\geq \mathcal{CE}_{n}(\psi(X))$ for all $n\in \mathbb{N}.$
From (\ref{stackXY}), we immediately obtain that $\mathcal{CE}_{n}(\psi(Y))\geq \mathcal{CE}_{n}(\psi(X))$,
which yields $\mathcal{CE}_{\psi,n}(Y)\geq \mathcal{CE}_{\psi,n}(X)$ for all $n\in \mathbb{N}.$
\end{proof}
In the following theorem, we can show that if two random variables $X$ and $Y$ are ordered with respect to their reversed failure rate functions, then their corresponding variance and WGCE will be ordered too, provided that a weighted MIT is increasing, and the cumulative weight functions are increasing. We recall that if $X$ is greater than $Y$ in the usual stochastic order, denoted by $X\geq_{st}Y,$ then
\begin{equation}\label{stmonotone}
\mathbb{E}[h(X)]\geq\, \mathbb{E}[h(Y)],
\end{equation}
for all increasing functions $h.$
\begin{theorem}\label{reversedharazd:thm}
Let $X$ and $Y$ be  absolutely continuous non-negative random variables with 
weighted mean inactivity time functions $\widetilde{\mu}_{\psi(X)}(t)$ and $\widetilde{\mu}_{\psi(Y)}(t),$ respectively, such that $X\geq_{st}Y.$ If $X\leq_{wmit}^\phi Y$ and either $X$ or $Y$ is IWMIT, then
\begin{description}
  \item[(i)] $\sigma^2[\psi(X)]\geq\sigma^2[\psi(Y)];$
  \item[(ii)] $\mathcal{CE}_{\psi,n}(X)\geq \mathcal{CE}_{\psi,n}(Y)$, for all $n\in \mathbb{N}.$
\end{description}
\end{theorem}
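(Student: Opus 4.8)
The plan is to reduce both assertions to the two representations established above. By Theorem~\ref{thm:sigma},
\[
\sigma^2[\psi(X)]=\mathbb{E}\big[\widetilde{\mu}_{\psi(X)}^2(X)\big]
\qquad\text{and}\qquad
\sigma^2[\psi(Y)]=\mathbb{E}\big[\widetilde{\mu}_{\psi(Y)}^2(Y)\big],
\]
while by Theorem~\ref{th:rit},
\[
\mathcal{CE}_{\psi,n}(X)=\mathbb{E}\big[\widetilde{\mu}_{\psi(X)}(X_n)\big]
\qquad\text{and}\qquad
\mathcal{CE}_{\psi,n}(Y)=\mathbb{E}\big[\widetilde{\mu}_{\psi(Y)}(Y_n)\big],
\]
where $X_n$ and $Y_n$ denote the $n$-th lower record values of i.i.d.\ sequences distributed as $X$ and $Y$, respectively. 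In every case the quantity to be compared is the expectation of a WMIT function (or its square) evaluated at a random argument, so I would insert an intermediate expectation and combine two monotonicity facts.

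The first fact is that $X\leq_{wmit}^\phi Y$ means, by Definition~\ref{def:WMIT}, $\widetilde{\mu}_{\psi(X)}(t)\geq\widetilde{\mu}_{\psi(Y)}(t)\geq 0$ for every admissible $t$; since $x\mapsto x^2$ is increasing on $[0,\infty)$, this pointwise inequality is preserved after squaring. The second fact is that each WMIT function is non-negative and, whenever the corresponding variable is IWMIT, it is an increasing function of its argument; hence the hypothesis $X\geq_{st}Y$ lets one transfer the random argument from the smaller variable to the larger one through~(\ref{stmonotone}), i.e.\ $\mathbb{E}[h(X)]\geq\mathbb{E}[h(Y)]$ for increasing $h$.

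For part~(i), assume first that $Y$ is IWMIT, so $\widetilde{\mu}_{\psi(Y)}^2$ is increasing; then
\[
\sigma^2[\psi(X)]=\mathbb{E}\big[\widetilde{\mu}_{\psi(X)}^2(X)\big]
\;\geq\;\mathbb{E}\big[\widetilde{\mu}_{\psi(Y)}^2(X)\big]
\;\geq\;\mathbb{E}\big[\widetilde{\mu}_{\psi(Y)}^2(Y)\big]=\sigma^2[\psi(Y)],
\]
the first inequality by the pointwise bound of the first fact and the second by~(\ref{stmonotone}); if instead $X$ is IWMIT, one applies the same two steps in the reverse order, using $X\geq_{st}Y$ with the increasing function $\widetilde{\mu}_{\psi(X)}^2$ and then the pointwise bound. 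For part~(ii) I would first record the auxiliary claim that $X\geq_{st}Y$ forces $X_n\geq_{st}Y_n$ for all $n\in\mathbb{N}$: writing $F_n(t)=\Phi_n(F(t))$ with $\Phi_n(p)=p\sum_{k=0}^{n-1}\frac{(-\log p)^k}{k!}$ from~(\ref{cdfYn+1}), a short computation gives $\Phi_n'(p)=\frac{(-\log p)^{n-1}}{(n-1)!}\geq 0$, so $\Phi_n$ is increasing and $F(t)\leq G(t)$ yields $F_n(t)\leq G_n(t)$. Granting this, if $Y$ is IWMIT,
\[
\mathcal{CE}_{\psi,n}(X)=\mathbb{E}\big[\widetilde{\mu}_{\psi(X)}(X_n)\big]
\;\geq\;\mathbb{E}\big[\widetilde{\mu}_{\psi(Y)}(X_n)\big]
\;\geq\;\mathbb{E}\big[\widetilde{\mu}_{\psi(Y)}(Y_n)\big]=\mathcal{CE}_{\psi,n}(Y),
\]
and the case in which $X$ is IWMIT again follows by interchanging the order of the two steps.

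The step I expect to be the main obstacle is the auxiliary monotonicity $X\geq_{st}Y\Rightarrow X_n\geq_{st}Y_n$ for lower record values; once the elementary identity $\Phi_n'(p)=(-\log p)^{n-1}/(n-1)!$ is in hand, everything else is a mechanical chaining of inequalities. The other point requiring care is purely organizational: the IWMIT hypothesis renders only one of the two WMIT functions monotone, so the argument must always compose that monotone function with the stochastically larger variable ($X$ or $X_n$), leaving the pointwise WMIT comparison to act on the other side.
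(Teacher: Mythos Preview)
Your proposal is correct and follows essentially the same approach as the paper: both arguments reduce the claims to the representations $\sigma^2[\psi(\cdot)]=\mathbb{E}[\widetilde{\mu}_{\psi(\cdot)}^2(\cdot)]$ and $\mathcal{CE}_{\psi,n}(\cdot)=\mathbb{E}[\widetilde{\mu}_{\psi(\cdot)}(\cdot_n)]$, and then chain the pointwise WMIT comparison with the stochastic-order step in the appropriate order depending on which variable is IWMIT. Your explicit verification that $\Phi_n'(p)=(-\log p)^{n-1}/(n-1)!$ to obtain $X\geq_{st}Y\Rightarrow X_n\geq_{st}Y_n$ is a welcome addition, since the paper merely asserts this implication as ``not hard to find''.
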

\begin{proof}
(i) Let $X$ be IWMIT. From (\ref{sigma}), we get
\begin{eqnarray*}
  \sigma^2[\psi(X)]=\mathbb{E}[\widetilde{\mu}_{\psi(X)}^2(X)]
  \geq\mathbb{E}[\widetilde{\mu}_{\psi(X)}^2(Y)]\geq \mathbb{E}[\widetilde{\mu}_{\psi(Y)}^2(Y)]
  =\sigma^2[\psi(Y)].
\end{eqnarray*}
The first inequality is obtained by noting that $X$ is IWMIT, so that $\widetilde{\mu}_{\psi(X)}^2(t)$ is increasing, and by virtue of (\ref{stmonotone}). The last inequality is obtained by the fact that $X\leq_{wmit}^\phi Y$ implies
$\widetilde{\mu}_{\psi(X)}(t)\geq \widetilde{\mu}_{\psi(Y)}(t),\ t>0,$ due to Definition \ref{def:WMIT}. When $Y$ is IWMIT, the proof is similar.
\\
(ii) Let $X$ be IWMIT. From Theorem \ref{th:rit}, for all $n\in\mathbb N,$ we get
\begin{eqnarray*}
  \mathcal{CE}_{\psi,n}(X)=\mathbb{E}[\widetilde{\mu}_{\psi(X)}(X_n)]\geq
  \mathbb{E}[\widetilde{\mu}_{\psi(X)}(Y_n)]\geq \mathbb{E}[\widetilde{\mu}_{\psi(Y)}(Y_n)]=\mathcal{CE}_{\psi,n}(Y).
\end{eqnarray*}
The first inequality is obtained as follows: It is not hard to find that $X\geq_{st}Y$ implies $X_n\geq_{st}Y_n$ for all $n\in \mathbb{N},$ and hence the first inequality is concluded by virtue of (\ref{stmonotone}) since $\widetilde{\mu}_{\psi(X)}(t)$ is increasing. The second inequality is obtained noting that assumption $X\leq_{wmit}^\phi Y$ implies $\widetilde{\mu}_{\psi(X)}(t)\geq \widetilde{\mu}_{\psi(Y)}(t),\ t>0$, from Definition \ref{def:WMIT}. When $Y$ is IWMIT, the proof is similar.
\end{proof}
%

\section{Connection with the location-independent riskier order}

In the last decades, the attention of scholars on quantiles of probability distributions has increased continuously, since they have
an immediate interpretation in terms of over/or undershoot probabilities. Several applications of quantiles have been oriented
to current problems of risk management involving the concept of value-at-risk (VaR). For a random variable $X$ with CDF $F,$ the VaR or left-continuous inverse (quantile function) is defined by
\[
F^{-1}(p)=\inf\{x \in \mathbb R : F(x)\geq p\},\quad \hbox{for}\ p\in(0,1).
\]
In today's financial world, VaR has become the benchmark risk measure: its importance is unquestionable since regulators
accept this model as the basis for setting capital requirements for market risk exposure; see e.g.\ Denuit \emph{et al.} \cite{Denuit}.
The excess wealth transform (or right spread function) of a random variable $X$ with distribution function $F$ and with a finite mean, is defined by
\begin{equation}\label{excess}
W_X(p)=\mathbb{E}[(X-F^{-1}(p))^+]=\int_{F^{-1}(p)}^{\infty}\overline{F}(x)\,{\rm d}x
=\int_{p}^{1}(F^{-1}(q)-F^{-1}(p))\,{\rm d}q,
\end{equation}
for $p\in(0,1).$ For any real number $a,$ we denote by $a^+$ its positive part, that is, $a^+=a$ if $a>0,$ and $a^+=0$ if $a\leq 0.$
We remark that it is not necessary for the random variable $X$ to be non-negative in order for $W_X(p)$ to be well defined.
Indeed, it is only required that $X$ has a finite mean.
Based on this concept, the excess wealth order (or the right spread order) is introduced in Fern\'andez-Ponce \emph{et al.} \cite{Fernandez-Ponce1998},
by expressing that the expected shortfall risk measure (for the positive tail) is comparable, that is, $\mathbb{E}[(X-F^{-1}(p))^+]\leq \mathbb{E}[(Y-G^{-1}(p))^+]$ for all $p\in (0,1).$
Some applications of this function and the excess wealth order are considered in
Toomaj and Di Crescenzo \cite{ToomajAD2020} and \cite{ToomajAD2020Mathematics}.
Hereafter, we define the left spread function, which is  dual to the right spread function given in (\ref{excess}).
\begin{definition}\label{def:4}
Let $X$ be a random variable having CDF $F(x)$ and with finite mean. The left spread function of $X$, 
for $0<p<1$ is defined by
$$
\widetilde{W}_X(p)=\mathbb{E}[(F^{-1}(p)-X)^+]=\int_{0}^{F^{-1}(p)}F(x)\,{\rm d}x
=\int_{0}^{p}(F^{-1}(p)-F^{-1}(q))\,{\rm d}q.
$$
\end{definition}
\par
The left spread function is an increasing function of $p$. Moreover, it is closely related to the MIT function given in (\ref{mit}) by the following relation, if $X$ is non-negative:
$$\displaystyle\widetilde{\mu}(F^{-1}(p))=\mathbb{E}[F^{-1}(p)-X|X\leq F^{-1}(p)]=\frac{\widetilde{W}_X(p)}{p},
\quad 0<p<1.$$
Thanks to the previous identity, 
in the next theorem we show that the variance and the GCE of a random variable can be expressed in terms of the left spread function.
The results follow from Theorems 19 and 21 of Toomaj and Di Crescenzo \cite{ToomajAD2020} and thus the proof is omitted.
\begin{theorem}\label{thm:vargce}
Let $X$ denote an absolutely continuous non-negative random variable with CDF $F.$ Then it holds that
\begin{description}
  \item[(i)] $\displaystyle Var(X)=\int_{0}^{1}\left[\widetilde{\mu}(F^{-1}(p))\right]^2\,{\rm d}p,$
  \item[(ii)] $\displaystyle\mathcal{CE}_{n}(X)=\frac{1}{(n-1)!}\int_{0}^{1}\widetilde{\mu}(F^{-1}(p))(-\log p)^{n-1}\,{\rm d}p$,\quad for all $n\in\mathbb{N}.$
\end{description}
\end{theorem}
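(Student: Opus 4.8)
The plan is to deduce both identities from a single change of variable, $p=F(x)$, applied to integral representations of $Var(X)$ and $\mathcal{CE}_n(X)$ that are already at our disposal, exploiting the fact that $F(X)$ is uniformly distributed on $(0,1)$ because $X$ is absolutely continuous.

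For Part (i) I would start from the representation $\sigma^2(X)=\mathbb{E}[\widetilde{\mu}^2(X)]$ recalled in (\ref{sigma2}), that is,
\[
 Var(X)=\int_0^\infty \widetilde{\mu}^2(x)\,f(x)\,{\rm d}x .
\]
Setting $p=F(x)$, so that ${\rm d}p=f(x)\,{\rm d}x$ and $x=F^{-1}(p)$ on the support of $X$, this becomes $Var(X)=\int_0^1 [\widetilde{\mu}(F^{-1}(p))]^2\,{\rm d}p$, which is the claim. If one wishes to make the dependence on the left spread function explicit, it suffices to replace $\widetilde{\mu}(F^{-1}(p))$ by $\widetilde{W}_X(p)/p$ via the identity stated just before the theorem.

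For Part (ii) I would invoke Theorem \ref{th:rit} with the choice $\psi(t)=t$ --- for which $\widetilde{\mu}_{\psi}\equiv\widetilde{\mu}$ and $\mathcal{CE}_{\psi,n}\equiv\mathcal{CE}_n$ --- obtaining $\mathcal{CE}_n(X)=\mathbb{E}[\widetilde{\mu}(X_n)]$, where the $n$-th lower record $X_n$ has PDF $f_n(x)=f(x)\,T^{n-1}(x)/(n-1)!$ (cf.\ (\ref{pdfrecord}), with the index shifted). Hence
\[
 \mathcal{CE}_n(X)=\frac{1}{(n-1)!}\int_0^\infty \widetilde{\mu}(x)\,T^{n-1}(x)\,f(x)\,{\rm d}x ,
\]
and the same substitution $p=F(x)$, together with $T(x)=-\log F(x)=-\log p$, yields $\mathcal{CE}_n(X)=\frac{1}{(n-1)!}\int_0^1 \widetilde{\mu}(F^{-1}(p))(-\log p)^{n-1}\,{\rm d}p$. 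Alternatively, bypassing Theorem \ref{th:rit}, one may start directly from (\ref{gce}) and integrate by parts, using $\widetilde{\mu}(x)F(x)=\int_0^x F(u)\,{\rm d}u$ from (\ref{mit}) and $\frac{\rm d}{{\rm d}x}\frac{T^n(x)}{n!}=-\tau(x)\frac{T^{n-1}(x)}{(n-1)!}$ (the boundary terms vanishing because $x F(x)T^n(x)\to 0$ as $x\to 0$ and $T^n(x)\to 0$ as $x\to\infty$), which reproduces the same single integral in $x$ before the change of variable.

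The computation is otherwise routine; the only delicate point is the legitimacy of the substitution $p=F(x)$ and of writing $F^{-1}(F(x))=x$ when $F$ is not strictly increasing. This is handled by the probability integral transform: since $X$ is absolutely continuous, $F$ is continuous, $F(X)\sim\mathrm{Uniform}(0,1)$, and $F^{-1}(F(x))=x$ for every $x$ outside the at most countably many maximal intervals on which $F$ is constant, and those intervals carry no mass of $f$; thus $\int_0^\infty h(F(x))\,f(x)\,{\rm d}x=\int_0^1 h(p)\,{\rm d}p$ for the integrands $h$ at hand, while the continuity of $\widetilde{\mu}$ (clear from (\ref{mit})) guarantees that all compositions are well defined. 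Finiteness of the two integrals is ensured by the standing assumption that the relevant expectations, here $\mathbb{E}[X^2]$ and $\mathcal{CE}_n(X)$, are finite.
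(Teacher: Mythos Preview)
Your proposal is correct and follows essentially the same route as the paper: although the paper omits the proof of this theorem (deferring to \cite{ToomajAD2020}), the very argument you give --- the change of variable $p=F(x)$ applied to the representations $\sigma^2(X)=\mathbb{E}[\widetilde{\mu}^2(X)]$ and $\mathcal{CE}_{n}(X)=\mathbb{E}[\widetilde{\mu}(X_n)]$ --- is exactly what the paper uses to prove the weighted generalization in Theorem~\ref{thm:vargcre}. Your additional remarks on the legitimacy of the substitution and the alternative integration-by-parts derivation of (ii) are sound extras.
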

Let us consider the following example.
\begin{example}
If $X$ is uniformly distributed in $[0,b],$ then
\[
\widetilde{\mu}(F^{-1}(p))=\frac{bp}{2}.
\]
Recalling Theorem \ref{thm:vargce}, we get
\[
Var(X)=\int_{0}^{1}\left[\widetilde{\mu}(F^{-1}(p))\right]^2{\rm d}p=\frac{b^2}{12}.
\]
On the other hand, for any $n\in\mathbb{N}$ we obtain
$$
\mathcal{CE}_n(X)=\frac{b}{2(n-1)!}\int_{0}^{1}p\,(-\log p)^{n-1}\,{\rm d}p
=\frac{b}{2^{n+1}}.
$$
\end{example}
In economics, many stochastic orders are built to compare the risks of two random assets.
To keep the comparison independent of locations, Jewitt \cite{Jewitt} proposes the following concept.
A non-negative random asset $Y$ is said to be location independent riskier than another non-negative random asset $X,$ denoted by $X\leq_{lir}Y,$ if and only if,
\[
 \int_{0}^{F^{-1}(p)} F(x)\,{\rm d}x\leq  \int_{0}^{G^{-1}(p)} G(x)\,{\rm d}x,\quad \hbox{for all $p\in(0,1)$},
\]
or equivalently
\begin{equation}\label{eq:qmeaninactivity}
\widetilde{\mu}_X(F^{-1}(p))\leq \widetilde{\mu}_Y(G^{-1}(p)),\quad \hbox{for all $p\in(0,1)$},
\end{equation}
where $\widetilde{\mu}_X$ and $\widetilde{\mu}_Y$ denote the MIT functions of $X$ and $Y$, respectively. Roughly speaking, if the inequality (\ref{eq:qmeaninactivity}) holds then
$Y$ has more weight in the lower tail than $X$.
Intuitively, having a great weight in the lower tail is something which should be avoided by risk averters. One advantage of the above definition is that it is a ``choice based" criterion of risk which does not stipulate that the distributions have equal means. The proof of the next lemma is straightforward due to Theorem \ref{thm:vargce} and applying (\ref{eq:qmeaninactivity}), and therefore it is omitted.
\begin{theorem}\label{thm:lir}
Let $X$ and $Y$ be two absolutely continuous non-negative random variables such that $X\leq_{lir}Y.$ Then
\begin{description}
  \item[(i)] $Var(X)\leq Var(Y);$
  \item[(ii)] $\mathcal{CE}_{n}(X)\leq \mathcal{CE}_{n}(Y)$ for all $n\in\mathbb{N}.$
\end{description}
\end{theorem}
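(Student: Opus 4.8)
The plan is to derive both inequalities directly from the integral representations of the variance and the generalized cumulative entropy supplied by Theorem \ref{thm:vargce}, combined with the quantile reformulation (\ref{eq:qmeaninactivity}) of the location-independent riskier order.

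First I would recall that, by Theorem \ref{thm:vargce}(i), $Var(X)=\int_{0}^{1}[\widetilde{\mu}_X(F^{-1}(p))]^2\,{\rm d}p$ and, analogously, $Var(Y)=\int_{0}^{1}[\widetilde{\mu}_Y(G^{-1}(p))]^2\,{\rm d}p$. Since $\widetilde{\mu}_X$ and $\widetilde{\mu}_Y$ are non-negative (being conditional expectations of non-negative quantities), the hypothesis $X\leq_{lir}Y$, written in the form (\ref{eq:qmeaninactivity}) as $\widetilde{\mu}_X(F^{-1}(p))\leq\widetilde{\mu}_Y(G^{-1}(p))$ for all $p\in(0,1)$, may be squared pointwise while preserving the inequality. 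Integrating $[\widetilde{\mu}_X(F^{-1}(p))]^2\leq[\widetilde{\mu}_Y(G^{-1}(p))]^2$ over $p\in(0,1)$ then yields part (i).

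For part (ii) I would instead invoke Theorem \ref{thm:vargce}(ii), that is, $\mathcal{CE}_{n}(X)=\frac{1}{(n-1)!}\int_{0}^{1}\widetilde{\mu}_X(F^{-1}(p))(-\log p)^{n-1}\,{\rm d}p$ together with the corresponding expression for $Y$. The kernel $(-\log p)^{n-1}/(n-1)!$ is non-negative on $(0,1)$ for every $n\in\mathbb{N}$, so multiplying the pointwise inequality (\ref{eq:qmeaninactivity}) by this weight and integrating over $(0,1)$ gives $\mathcal{CE}_{n}(X)\leq\mathcal{CE}_{n}(Y)$, which is part (ii).

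The argument is essentially a monotonicity-under-integration computation, so I do not expect a genuine obstacle. The only points requiring a word of care are the non-negativity of the MIT functions, which legitimizes squaring in part (i), and the non-negativity of the kernel $(-\log p)^{n-1}$, which legitimizes multiplying the pointwise inequality in part (ii); both are immediate. One should also note that the relevant integrals are finite under the standing assumption that the pertinent expectations exist, so all manipulations are valid.
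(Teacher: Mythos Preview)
Your proposal is correct and follows exactly the approach the paper indicates: the authors omit the proof, stating that it is ``straightforward due to Theorem \ref{thm:vargce} and applying (\ref{eq:qmeaninactivity}),'' which is precisely the combination of the quantile integral representations with the pointwise MIT inequality that you carry out. The remarks you add about non-negativity of the MIT functions and of the kernel $(-\log p)^{n-1}$ are the right justifications for preserving the inequalities under squaring and integration.
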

Based on Theorem \ref{thm:lir}, it is worth pointing out that if $Y$ is more risky than $X$ both in the  variance and GCE, then it has a larger variance and GCE. Hereafter, we obtain expressions for the transformed random variable and weighted GCE in terms of transformed excess wealth function. For an absolutely continuous non-negative random variable $X$ with CDF $F(x),$ assume that $\psi(\cdot)$ is an increasing non-negative function defined by (\ref{weight function}). The {\em transformed (or weighted) left spread function}, for all $0<p<1,$ is defined by
\begin{eqnarray}\label{trasexcesswealth}
\widetilde{W}_{\psi(X)}(p)  &=&   \mathbb{E}[(\psi(F^{-1}(p))-\psi(X))^+]
=\int_{0}^{F^{-1}(p)} \phi(x)F(x)\,{\rm d}x
\nonumber\\
&=&   \int_{0}^{p}\left[\psi(F^{-1}(p))-\psi(F^{-1}(q))\right]\,{\rm d}q.
\end{eqnarray}
When $\psi(t)=t,$ then 
from (\ref{trasexcesswealth}) we have that $\widetilde{W}_{\psi(X)}(p)$ is equal to the 
left spread function introduced in Definition \ref{def:4}. Moreover,  
this function is related to the weighted mean inactivity time function by the following relation:
\begin{equation}\label{mrlexcesstrans}
\widetilde{\mu}_{\psi(X)}(F^{-1}(p))=\frac{\widetilde{W}_{\psi(X)}(p)}{p},\qquad 0<p<1.
\end{equation}
Now, in the following theorem, we provide expressions for both the variance of a transformed random variable and the weighted GCE in terms of (\ref{mrlexcesstrans}).
\begin{theorem}\label{thm:vargcre}
Let $X$ denote an absolutely continuous random variable with CDF $F.$ Then it holds that
\begin{description}
  \item[(i)] $\displaystyle Var[\psi(X)]=\int_{0}^{1}\left[\widetilde{\mu}_{\psi(X)}(F^{-1}(p))\right]^2{\rm d}p,$
  \item[(ii)] $\displaystyle\mathcal{CE}_{\psi,n}(X)=\frac{1}{(n-1)!}\int_{0}^{1}\widetilde{\mu}_{\psi(X)}(F^{-1}(p))(-\log p)^{n-1}\,{\rm d}p$,\quad for all $n\in\mathbb{N}.$
\end{description}
\end{theorem}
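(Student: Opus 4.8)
The plan is to obtain both formulas by a single change of variable, $p=F(x)$, in the expectation-type representations of $Var[\psi(X)]$ and $\mathcal{CE}_{\psi,n}(X)$ already at our disposal.

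For part~(i) I would start from Theorem~\ref{thm:sigma}, which gives $Var[\psi(X)]=\mathbb{E}[\widetilde{\mu}_\psi^2(X)]=\int_0^\infty \widetilde{\mu}_\psi^2(x)f(x)\,{\rm d}x$ (well defined since $\mathbb{E}[\psi^2(X)]<\infty$). Applying the substitution $p=F(x)$, so that ${\rm d}p=f(x)\,{\rm d}x$ and $x=F^{-1}(p)$ on the support of $X$, the integral becomes $\int_0^1 \widetilde{\mu}_\psi^2(F^{-1}(p))\,{\rm d}p=\int_0^1\big[\widetilde{\mu}_{\psi(X)}(F^{-1}(p))\big]^2\,{\rm d}p$, which is the claim. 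Equivalently, one may invoke the identity (\ref{mrlexcesstrans}) to re-express this in terms of the transformed left spread function, exactly as in Theorem~\ref{thm:vargce}(i).

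For part~(ii) I would instead start from Theorem~\ref{th:rit}, namely $\mathcal{CE}_{\psi,n}(X)=\mathbb{E}[\widetilde{\mu}_\psi(X_n)]$; since by (\ref{pdfrecord}) the PDF of the $n$-th lower record is $f_n(x)=f(x)\,T^{n-1}(x)/(n-1)!$ with $T(x)=-\log F(x)$ from (\ref{chf}), this reads $\mathcal{CE}_{\psi,n}(X)=\frac{1}{(n-1)!}\int_0^\infty \widetilde{\mu}_\psi(x)(-\log F(x))^{n-1}f(x)\,{\rm d}x$. The same substitution $p=F(x)$, which turns $-\log F(x)$ into $-\log p$, yields $\mathcal{CE}_{\psi,n}(X)=\frac{1}{(n-1)!}\int_0^1 \widetilde{\mu}_{\psi(X)}(F^{-1}(p))(-\log p)^{n-1}\,{\rm d}p$, as desired. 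Alternatively, one may apply Theorem~\ref{thm:vargce} directly to the random variable $\psi(X)$, using that its quantile function is $\psi\circ F^{-1}$, that the MIT of $\psi(X)$ evaluated at $\psi(t)$ coincides with $\widetilde{\mu}_{\psi(X)}(t)$ of (\ref{wmit}), and that $\mathcal{CE}_{\psi,n}(X)=\mathcal{CE}_n(\psi(X))$.

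The only points needing care are the finiteness conditions ensuring the integrals in Theorems~\ref{thm:sigma} and~\ref{th:rit} converge (the finite second moment $\mathbb{E}[\psi^2(X)]$ for~(i), and assumption (\ref{EPSIX}) together with integrability of $\widetilde{\mu}_\psi$ against $f_n$ for~(ii)), and the legitimacy of the substitution $p=F(x)$: since $F$ is continuous, this is the standard change of variables for the law of $X$, and the set on which $F$ is flat carries no mass, so it is immaterial. I do not anticipate any real obstacle beyond these routine checks; the content of the theorem is essentially a restatement of the earlier representations in the quantile variable.
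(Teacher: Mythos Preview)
Your proposal is correct and follows essentially the same approach as the paper: both parts are obtained from Theorems~\ref{thm:sigma} and~\ref{th:rit} via the single substitution $p=F(x)$, exactly as you outline. The paper's argument is even terser---it performs the change of variable for part~(i) and merely remarks that part~(ii) follows similarly from~(\ref{eq:rit})---so your write-up, including the explicit use of~(\ref{pdfrecord}) to identify the density of $X_n$, is if anything more detailed than what the paper records.
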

\begin{proof}
(i) By taking  $p=F(x),$ it holds that
\begin{eqnarray*}
  \int_{0}^{1}\left[\widetilde{\mu}_{\psi(X)}(F^{-1}(p))\right]^2{\rm d}p&=&\int_{0}^{\infty}\left[\widetilde{\mu}_{\psi(X)}(x)\right]^2{\rm d}F(x) 
  =Var[\psi(X)],
\end{eqnarray*}
where the last equality is obtained from Theorem \ref{thm:sigma}. The proof of Part (i) is thus completed.
By virtue of (\ref{eq:rit}), Part (ii) can be proved in a similar way.
\end{proof}


\section{Applications}

In this section, we propound two applications in reliability and renewal theory based on  results given in the preceding sections.

\subsection{Reliability}
Let us consider a one-unit system which has the ability to withstand a random number of shocks. We assume that the shocks arrive according to a non-homogeneous Poisson process, and that the number of shocks and the interarrival (or successive) times of shocks are independent. Let $N$ denote the random number of shocks survived by the system, whereas $X_j$ denotes the random interarrival time between the $(j-1)$-th and $j$-th shocks. Hence, the lifetime $T$ of the system is given by $T=\sum_{j=1}^{N}X_j.$
Moreover, let the interarrivals be independent and identically distributed, and let the renewal process describing the number of shocks have cumulative intensity function
$\Lambda(t)=-\log \overline{F}(t)=\int_{0}^{t}\lambda(\tau)\,{\rm d}\tau$, $t\geq 0$, where $\lambda(\tau)$ is the associated
hazard rate (\ref{eq:failrate}). Then, the CDF of $T$ can be written as
\begin{equation}\label{shock}
F_T(t)=\sum_{k=0}^{\infty}P(k)\frac{\Lambda^k(t)}{k!}e^{-\Lambda(t)},\qquad t>0,
\end{equation}
where $P(k)=\mathbb{P}(N\leq k)$, $k\in \mathbb{N}$, is the distribution function of the number of shocks
survived by the device, with $\overline{P}(0)=1-P(0)=1$.      
Relation (\ref{shock}) also holds for a repairable system as discussed in Chahkandi \emph{et al.}\ \cite{Chahkandi}.
\begin{theorem}
Let us consider two devices with random lifetimes $T_1$ and $T_2$ subject to shocks arriving according to a non-homogeneous Poisson process, and let $P_1(k)$ and $P_2(k)$ be respectively the distribution functions of the number of shocks survived by the two devices.
If $N_1\leq_{rhr}N_2,$ then $T_1\leq_{wmit}^\phi T_2.$
\end{theorem}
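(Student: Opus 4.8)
The plan is to exploit the characterization of the reversed hazard rate order together with the equivalent condition (ii) of Theorem~\ref{equivalent:wmrlthm}, namely that $T_1\leq_{wmit}^\phi T_2$ holds precisely when the ratio $\int_0^t\phi(x)F_{T_2}(x)\,{\rm d}x\big/\int_0^t\phi(x)F_{T_1}(x)\,{\rm d}x$ is increasing in $t$. A cleaner route, however, is to apply Part~(i) of Theorem~\ref{thm:wmit}: if we can show $T_1\leq_{rhr}T_2$, then $T_1\leq_{wmit}^\phi T_2$ follows at once for any non-negative weight $\phi$. So the real task is to propagate the reversed hazard rate ordering of the shock counts $N_1\leq_{rhr}N_2$ up to the system lifetimes $T_1,T_2$ whose CDFs are given by the compound expression \eqref{shock}.

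First I would record that $N_1\leq_{rhr}N_2$ means $P_2(k)/P_1(k)$ is increasing in $k\in\mathbb N_0$, i.e.\ the function $(k,i)\mapsto P_i(k)$ is $TP_2$ on $\mathbb N_0\times\{1,2\}$. Next, writing \eqref{shock} as
$$
F_{T_i}(t)=\sum_{k=0}^{\infty}P_i(k)\,\pi_k(t),\qquad \pi_k(t):=\frac{\Lambda^k(t)}{k!}e^{-\Lambda(t)},
$$
I would observe that the Poisson weights $\pi_k(t)$ form a $TP_2$ kernel in $(k,t)\in\mathbb N_0\times(0,\infty)$, since $\pi_{k}(t)/\pi_{k-1}(t)=\Lambda(t)/k$ is increasing in $t$ (because $\Lambda$ is non-decreasing). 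Then the basic composition/variation-diminishing theorem of Karlin~\cite{Karlin}, applied to the product of the two $TP_2$ kernels $P_i(k)$ and $\pi_k(t)$ and summed over $k$, yields that $F_{T_i}(t)$ is $TP_2$ in $(i,t)\in\{1,2\}\times(0,\infty)$; equivalently $F_{T_2}(t)/F_{T_1}(t)$ is increasing in $t$, which is exactly $T_1\leq_{rhr}T_2$. An invocation of Theorem~\ref{thm:wmit}(i) then closes the argument.

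The main obstacle is the clean handling of the infinite sum and the application of the composition theorem: one must make sure Karlin's result applies in the form ``$TP_2 \times TP_2$, integrated (summed) against a $\sigma$-finite measure, is $TP_2$'', and check the mild regularity (the sums converge and are positive on $D$, which follows from $\overline P_i(0)=1$ and $\Lambda(t)<\infty$). A subtlety worth a sentence is that $\pi_k(t)$ is $TP_2$ only because $\Lambda(t)$ is monotone non-decreasing, which is automatic since $\Lambda(t)=\int_0^t\lambda(\tau)\,{\rm d}\tau$ with $\lambda\geq 0$; no IFR-type assumption on the interarrival distribution is needed. If one prefers to avoid Theorem~\ref{thm:wmit}(i) and argue directly, the alternative is to plug the $TP_2$ property of $F_{T_i}$ into condition (ii) of Theorem~\ref{equivalent:wmrlthm}: the inequality $\int_0^t\phi(x)[F_{T_1}(x)F_{T_2}(t)-F_{T_2}(x)F_{T_1}(t)]\,{\rm d}x\geq 0$ is immediate from $F_{T_1}(x)F_{T_2}(t)\geq F_{T_2}(x)F_{T_1}(t)$ for $x\leq t$ together with $\phi\geq 0$. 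Either way the reversed-hazard-rate step is the crux.
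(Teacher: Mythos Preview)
Your proposal is correct, but it follows a different path from the paper. The paper does not first establish $T_1\leq_{rhr}T_2$; instead it works directly with the integrated quantities. Using \eqref{shock} it writes
\[
\int_{0}^{t}\phi(x)F_{T_i}(x)\,{\rm d}x
 =\sum_{k=0}^{\infty}P_i(k)\int_{0}^{t}\phi(x)\frac{\Lambda^k(x)}{k!}\overline{F}(x)\,{\rm d}x,
\]
notes that $P_i(k)$ is $TP_2$ in $(i,k)$ by the hypothesis $N_1\leq_{rhr}N_2$, checks that the kernel $\int_{0}^{t}\phi(x)\frac{\Lambda^k(x)}{k!}\overline{F}(x)\,{\rm d}x$ is $TP_2$ in $(t,k)$, and then applies Karlin's composition theorem to conclude that $\int_{0}^{t}\phi(x)F_{T_i}(x)\,{\rm d}x$ is $TP_2$ in $(i,t)$, which is exactly condition (ii) of Theorem~\ref{equivalent:wmrlthm}.

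Your route applies the $TP_2$ machinery one level earlier, to the CDFs $F_{T_i}(t)$ themselves via the Poisson kernel $\pi_k(t)=\Lambda^k(t)e^{-\Lambda(t)}/k!$, thereby obtaining the stronger intermediate conclusion $T_1\leq_{rhr}T_2$ and then quoting Theorem~\ref{thm:wmit}(i). This buys you more: the reversed hazard rate ordering of $T_1,T_2$ is of independent interest and immediately yields $T_1\leq_{wmit}^\phi T_2$ for every non-negative weight simultaneously. The paper's argument, by contrast, keeps the weight $\phi$ inside the kernel throughout and never isolates the $rhr$ comparison; it is tailored to the WMIT order from the outset. Both arguments hinge on the same two ingredients (the $TP_2$ property coming from $N_1\leq_{rhr}N_2$ and the $TP_2$ property of the Poisson weights, which holds because $\Lambda$ is non-decreasing) and on Karlin's composition theorem, so the difference is one of packaging rather than of essential difficulty.
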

\begin{proof}
By making use of (\ref{shock}), we have for all $t>0,$
$$
\int_{0}^{t}\phi(x)F_{T_i}(x)\,{\rm d}x
 =\sum_{k=0}^{\infty}P_i(k)\int_{0}^{t}\phi(x)\frac{\Lambda^k(x)}{k!}\overline{F}(x)\,{\rm d}x,\qquad  i=1,2.
$$
From   (ii) of Theorem \ref{equivalent:wmrlthm}, it is sufficient to see that
$\int_{0}^{t}\phi(x)F_{T_2}(x)\,{\rm d}x/\int_{0}^{t}\phi(x)F_{T_1}(x)\,{\rm d}x$
is an increasing function of $t,$ or equivalently $\int_{0}^{t}\phi(x)F_{T_i}(x)\,{\rm d}x$ is $TP_2$ in $(i,t)\in \{1,2\}\times \mathbb{R}^+.$ Since $N_1\leq_{rhr}N_2$ by assumption, then $P_i(k)$ is $TP_2$ in $(i,k)\in  \{1,2\}\times\mathbb{N}.$
On the other hand, it is not hard to see that
\[
\int_{0}^{t}\phi(x)\frac{\Lambda^k(x)}{k!}\overline{F}(x)\,{\rm d}x,
\]
is $TP_2$ in $(t,k)\in \mathbb{R}^+\times\mathbb{N}.$ Then, the general composition theorem of Karlin [27] provides that $\int_{0}^{t}\phi(x)F_{T_i}(x)\,{\rm d}x$ is $TP_2$ in $(i,t)\in \{1,2\}\times \mathbb{R}^+$ and hence the claimed result follows.
\end{proof}
In the special case in which the interarrival times are independent and identically exponentially distributed, one clearly has that $\Lambda^k(t)=(\lambda t)^k$ in the right-hand-side of the distribution function (\ref{shock}). Let us consider the cumulative weight function $\psi(x)=x^r$, i.e.\ the weight function $\phi(x)=rx^{r-1}$, for  $r\in \mathbb{N}.$
\begin{theorem}\label{thm:shockpoisson}
Let $T_1$ and $T_2$ be the random lifetimes of two devices subject to shocks
governed by a homogeneous Poisson process having intensity $\lambda$,
and let $N_i$, $i=1,2$, be the random number of shocks survived by the $i$-th device,
with $P_i(k)= P(N\leq k)$, $k\in \mathbb{N}$. If, for $r\in \mathbb{N}$,
\begin{equation}\label{eq:poissonmonoton}
\frac{\sum_{k=0}^{j-r}{r+k-1\choose k}P_2(k)}
{\sum_{k=0}^{j-r}{r+k-1\choose k}P_1(k)}
\quad\hbox{is increasing in $k\in \mathbb{N}$,}
\end{equation}
then $T_1\leq_{wmit}^\phi T_2,$ for the cumulative weight function $\psi(x)=x^r$.
\end{theorem}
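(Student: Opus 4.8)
The plan is to apply the characterization of the WMIT order in Theorem \ref{equivalent:wmrlthm}(ii): with $\psi(x)=x^r$, hence $\phi(x)=rx^{r-1}$, it suffices to show that $\int_{0}^{t}\phi(x)F_{T_i}(x)\,{\rm d}x$ is $TP_2$ in $(i,t)\in\{1,2\}\times\mathbb{R}^+$. In the homogeneous case $\Lambda(t)=\lambda t$, so by (\ref{shock}) we have $F_{T_i}(x)=\sum_{k\ge 0}P_i(k)\frac{(\lambda x)^k}{k!}e^{-\lambda x}$. The first step is to evaluate the single term
$$
\int_{0}^{t}rx^{r-1}\frac{(\lambda x)^k}{k!}e^{-\lambda x}\,{\rm d}x
=\frac{r\lambda^{k}}{k!}\int_{0}^{t}x^{k+r-1}e^{-\lambda x}\,{\rm d}x .
$$
Using the Erlang/incomplete-gamma identity $\int_{0}^{t}x^{m-1}e^{-\lambda x}\,{\rm d}x=\frac{(m-1)!}{\lambda^{m}}\sum_{j\ge m}\frac{(\lambda t)^j}{j!}e^{-\lambda t}$ with $m=k+r$, together with the simplification $\frac{r(k+r-1)!}{k!}=r!\binom{r+k-1}{k}$, this term equals $\frac{r!}{\lambda^{r}}\binom{r+k-1}{k}\sum_{j\ge k+r}\frac{(\lambda t)^j}{j!}e^{-\lambda t}$.

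Multiplying by $P_i(k)$, summing over $k$, and interchanging the two sums (the constraint $j\ge k+r$ becomes $0\le k\le j-r$) then gives
$$
\int_{0}^{t}\phi(x)F_{T_i}(x)\,{\rm d}x=\frac{r!}{\lambda^{r}}\sum_{j\ge 0}b_i(j)\,\frac{(\lambda t)^j}{j!}e^{-\lambda t},\qquad
b_i(j):=\sum_{k=0}^{j-r}\binom{r+k-1}{k}P_i(k),
$$
with $b_i(j)=0$ for $j<r$. The positive constant $r!/\lambda^{r}$ is immaterial for the $TP_2$ property, so it remains only to show that $\Psi(i,t):=\sum_{j\ge 0}b_i(j)\frac{(\lambda t)^j}{j!}e^{-\lambda t}$ is $TP_2$ in $(i,t)$.

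Now hypothesis (\ref{eq:poissonmonoton}) states precisely that $b_2(j)/b_1(j)$ is nondecreasing in $j$, that is, $b_i(j)$ is $TP_2$ in $(i,j)\in\{1,2\}\times\mathbb{N}_0$. Moreover the Poisson kernel $p(j,t)=\frac{(\lambda t)^j}{j!}e^{-\lambda t}$ is $TP_2$ in $(j,t)\in\mathbb{N}_0\times\mathbb{R}^+$, since for $t_1\le t_2$ the ratio $p(j,t_2)/p(j,t_1)=e^{-\lambda(t_2-t_1)}(t_2/t_1)^{j}$ is nondecreasing in $j$. Applying the general composition theorem of Karlin \cite{Karlin} to $\Psi(i,t)=\sum_{j\ge0}b_i(j)p(j,t)$ then yields that $\Psi$, and hence $\int_{0}^{t}\phi(x)F_{T_i}(x)\,{\rm d}x$, is $TP_2$ in $(i,t)$; Theorem \ref{equivalent:wmrlthm}(ii) gives $T_1\leq_{wmit}^\phi T_2$.

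The only step requiring some care is the explicit integration in the first paragraph: one must carry the incomplete-gamma expansion through and recognize the binomial simplification so that, after reversing the order of summation, the inner sum is exactly the one appearing in (\ref{eq:poissonmonoton}); everything afterwards is a direct application of the composition theorem, in the same spirit as the proof of the preceding non-homogeneous theorem.
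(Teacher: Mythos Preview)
Your proof is correct and follows essentially the same approach as the paper's: both compute $\int_{0}^{t}rx^{r-1}F_{T_i}(x)\,{\rm d}x$ explicitly via the Erlang/incomplete-gamma identity, simplify with $\frac{r(k+r-1)!}{k!}=r!\binom{r+k-1}{k}$, interchange the sums to isolate $\sum_{k=0}^{j-r}\binom{r+k-1}{k}P_i(k)$, and then invoke the $TP_2$ property of the Poisson kernel together with hypothesis (\ref{eq:poissonmonoton}) and Karlin's composition theorem. The only cosmetic difference is that the paper writes the incomplete-gamma step as $1-\sum_{j=0}^{k+r-1}e^{-\lambda t}\frac{(\lambda t)^j}{j!}$ before passing to the tail sum, whereas you state the tail-sum form of the identity directly.
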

\begin{proof}
It is known that the distribution function of $T_i$, $i=1,2$,  is given by
\begin{equation}\label{shockpoisson}
H_{T_i}(x)=\sum_{k=0}^{\infty}P_i(k)\frac{e^{-\lambda x}(\lambda x)^k}{k!},\qquad x\geq 0.
\end{equation}
Let us consider the following well-known relation
\[
\int_{t}^{\infty}e^{-\lambda x}\frac{\lambda^{k+1} x^{k}}{k!}\,{\rm d}x
=\sum_{j=0}^{k}e^{-\lambda t}\frac{(\lambda t)^j}{j!},\qquad  k\in\mathbb{N}_0,\;\; t>0.
\]
Recalling (\ref{shockpoisson}) and using the aforementioned equation,
after some manipulations we get, for  $r\in \mathbb{N}$  and $i=1,2$,
\begin{eqnarray*}
\hspace{-0.5cm} \int_{0}^{t}rx^{r-1}H_{T_i}(x)\,{\rm d}x \!\!\!\!
&=& \!\!\!\! \int_{0}^{t}rx^{r-1}\sum_{k=0}^{\infty}P_i(k)\frac{e^{-\lambda x}(\lambda x)^k}{k!}\, {\rm d}x
\\
&=& \!\!\!\! \frac{r!}{\lambda^{r}}\sum_{k=0}^{\infty}P_i(k){r+k-1\choose k}\int_{0}^{t}e^{-\lambda x}\frac{\lambda^{k+r}x^{k+r-1}}{(k+r-1)!}\, {\rm d}x
\\
&=& \!\!\!\! \frac{r!}{\lambda^{r}}\sum_{k=0}^{\infty}P_i(k){r+k-1\choose k}\left[1-\int_{t}^{\infty}e^{-\lambda x}\frac{\lambda^{k+r}x^{k+r-1}}{(k+r-1)!}\, {\rm d}x\right]
\\
&=& \!\!\!\! \frac{r!}{\lambda^{r}}\sum_{k=0}^{\infty}P_i(k){r+k-1\choose k}\left[1-\sum_{j=0}^{k+r-1}e^{-\lambda t}\frac{(\lambda t)^j}{j!}\right]
\\
&=& \!\!\!\! \frac{r!}{\lambda^{r}}\sum_{k=0}^{\infty}P_i(k){r+k-1\choose k}\sum_{j=k+r}^{\infty}e^{-\lambda t}\frac{(\lambda t)^j}{j!}
\\
&=& \!\!\!\! \frac{r!}{\lambda^{r}}\sum_{j=r}^{\infty}e^{-\lambda t}\frac{(\lambda t)^j}{j!}\sum_{k=0}^{j-r}{r+k-1\choose k}P_i(k).
\end{eqnarray*}
Since $e^{-\lambda t}{(\lambda t)^j}/{j!}$ is $TP_2$ in $(j,t)\in \mathbb{N}\times \mathbb{R}^+,$ and
recalling relation (\ref{eq:poissonmonoton}), the general composition theorem of Karlin \cite{Karlin}
implies that $\int_{0}^{t}rx^{r-1}H_{T_i}(x)\,{\rm d}x$ is $TP_2$ in
$(i,t)\in \{1,2\}\times \mathbb{R}^+.$ This is equivalent to state that $T_1\leq_{wmit}^\phi T_2$
for $\psi(x)=x^r$.
\end{proof}
\par
We remark that
the case concerning the weight function $\phi(x)=x$  is considered in Theorem 14 of Kayid and Izadkhah
\cite{KayidIzadkhah2014}.
\par
Let us now consider another application.
Let $X_1,X_2,\ldots$ be a sequence of i.i.d.\ random variables, and let $N$ be a positive integer-valued random variable, which is independent of the $X_i.$ Denote by
\[
X_{N:N}=\max\{X_1,X_2,\ldots,X_N\}
\]
the maximum extreme order statistic in a sample having random  size. This random variable arises naturally in reliability theory as the lifetime of a parallel system with the random number of identical components with lifetimes $X_1,X_2,\ldots,X_N.$ In life testing, if a random censoring is adopted, then the completely observed data constitute a sample of random size $N,$ say, where $X_1,X_2,\ldots,X_N,$ $N>0,$ is a random variable of integer values. Let $X_{N_i:N_i}$ denote the maximum order statistic among
$X_1,X_2,\ldots,X_{N_i},$ where $N_i$ is a positive integer-valued random variable which is independent from the sequence of $X_1,X_2,\ldots$ for each $i=1,2.$ Now, we have the following theorem.
\begin{theorem}
Let the weight function $\phi(x)$ be increasing in $x.$ If $N_1\leq_{hr}N_2$, then $X_{N_1:N_1}\leq_{wmit}^\phi X_{N_2:N_2}.$
\end{theorem}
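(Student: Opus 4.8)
The plan is to reduce the claim to the $TP_2$ criterion for the WMIT order provided by part (ii) of Theorem \ref{equivalent:wmrlthm}: namely, $X_{N_1:N_1}\leq_{wmit}^\phi X_{N_2:N_2}$ holds if and only if
\[
 \int_0^t \phi(x)\,\mathbb{P}(X_{N_i:N_i}\leq x)\,{\rm d}x
 \qquad \text{is } TP_2 \text{ in } (i,t)\in\{1,2\}\times\mathbb{R}^+.
\]
First I would write down the CDF of $X_{N:N}$ by conditioning on $N$: with $p_i(k)=\mathbb P(N_i=k)$ one has $\mathbb P(X_{N_i:N_i}\leq x)=\sum_{k\geq 1}p_i(k)\,F^k(x)$, so that
\[
 \int_0^t \phi(x)\,\mathbb P(X_{N_i:N_i}\leq x)\,{\rm d}x
 =\sum_{k\geq 1} p_i(k)\,A_k(t),
 \qquad A_k(t):=\int_0^t \phi(x)\,F^k(x)\,{\rm d}x.
\]
The strategy is then a two-factor totally-positive composition, exactly in the spirit of the proof of Theorem \ref{thm:shockpoisson}: show that $p_i(k)$ is $TP_2$ in $(i,k)$, show that $A_k(t)$ is $TP_2$ in $(k,t)$, and invoke the general composition theorem of Karlin \cite{Karlin} to conclude that the sum is $TP_2$ in $(i,t)$.

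For the first factor, I would recall that $N_1\leq_{hr}N_2$ implies $N_1\leq_{st}N_2$ and, more to the point, the hazard rate order for discrete random variables is equivalent to $\mathbb P(N_i\geq k)$ being $TP_2$ in $(i,k)$; hence the survival functions $\overline P_i(k)=\mathbb P(N_i> k)$ (or $\mathbb P(N_i\geq k)$) are $TP_2$. Since the representation above is most naturally written with tail probabilities after an Abel summation — writing $\sum_k p_i(k)A_k(t)=\sum_k \overline P_i(k-1)\,[A_k(t)-A_{k-1}(t)]$ or directly integrating by parts — I would recast the integrand in terms of $\overline P_i$. The increment $A_k(t)-A_{k-1}(t)=\int_0^t\phi(x)F^{k-1}(x)\overline F(x)\,{\rm d}x$ is manifestly non-negative, which is what makes the rearrangement legitimate.

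For the second factor I need that $B_k(t):=\int_0^t \phi(x)F^{k-1}(x)\overline F(x)\,{\rm d}x$ is $TP_2$ in $(k,t)\in\mathbb N\times\mathbb R^+$. Since $\phi(x)F^{k-1}(x)\overline F(x)\geq 0$, this is the statement that $B_k(t)/B_k(s)$ is monotone in $k$ for $s<t$, i.e.\ that the integrand $\phi(x)F^{k-1}(x)\overline F(x)$ is $TP_2$ in $(k,x)$ combined with the $TP_2$ indicator ${\bf 1}[x\leq t]$ and another pass through Karlin's composition theorem — the crucial point being that $F^{k-1}(x)$ is log-supermodular in $(k,x)$ because $F(x)\in[0,1]$ is increasing, so $F^{k_1-1}(x_1)F^{k_2-1}(x_2)\geq F^{k_1-1}(x_2)F^{k_2-1}(x_1)$ for $k_1\leq k_2$, $x_1\leq x_2$. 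Here is where the hypothesis that $\phi$ is increasing is not actually needed for the $TP_2$ structure of $B_k$; rather, I expect the role of ``$\phi$ increasing'' to enter if one instead wants to bypass the tail-summation and argue directly, or if a subtlety at the lower endpoint ($k=1$, where $F^{0}=1$) forces one to be careful. The main obstacle, then, is bookkeeping: correctly passing between the density/tail representations of $N_i$ so that the discrete factor is genuinely $TP_2$, and chaining the composition theorem twice without sign errors; the analytic content is light, but one must make sure every intermediate kernel is non-negative so that Karlin's theorem applies. Once both factors are established, the composition theorem yields that $\int_0^t\phi(x)\mathbb P(X_{N_i:N_i}\leq x)\,{\rm d}x$ is $TP_2$ in $(i,t)$, which by Theorem \ref{equivalent:wmrlthm}(ii) is precisely $X_{N_1:N_1}\leq_{wmit}^\phi X_{N_2:N_2}$.
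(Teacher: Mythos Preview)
Your overall plan---reduce the claim to the $TP_2$ property of $\int_0^t\phi(x)\,\mathbb P(X_{N_i:N_i}\le x)\,{\rm d}x$ in $(i,t)$ via Theorem~\ref{equivalent:wmrlthm}(ii), write that integral as $\sum_{k\ge1}p_i(k)\,A_k(t)$ with $A_k(t)=\int_0^t\phi(x)F^k(x)\,{\rm d}x$, and then use a composition argument---is exactly the paper's approach. The paper verifies that $\eta(t,k)=A_k(t)$ is $TP_2$ in $(t,k)$ and that $\nu(k,i)=\mathbb P(N_i\ge k)$ is $TP_2$ in $(k,i)$ (from $N_1\le_{hr}N_2$), and then concludes by appealing to a ready-made lemma (Lemma~2.1 of Ortega) tailored to this situation.

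Where your argument breaks is the Abel step. Since $F^k(x)\le F^{k-1}(x)$, the sequence $A_k(t)$ is \emph{decreasing} in $k$, and in fact
\[
A_k(t)-A_{k-1}(t)=\int_0^t\phi(x)\,F^{k-1}(x)\,[F(x)-1]\,{\rm d}x
=-\int_0^t\phi(x)\,F^{k-1}(x)\,\overline F(x)\,{\rm d}x\le 0
\]
for $k\ge2$ (with $A_0:=0$ the $k=1$ term is $A_1\ge0$, so the signs are even mixed). Thus your rewriting $\sum_k\overline P_i(k-1)\,[A_k(t)-A_{k-1}(t)]$ is \emph{not} a sum of non-negative products of a $TP_2$ kernel in $(i,k)$ times a $TP_2$ kernel in $(k,t)$, and the basic composition formula does not apply. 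If you instead sum by parts in the other direction you get $\sum_{j\ge1}[A_j(t)-A_{j+1}(t)]\,P_i(j)$ with non-negative increments, but now the discrete factor is the CDF $P_i(j)$, whose $TP_2$ property is the \emph{reversed} hazard rate order, not the hazard rate order you are given. So neither orientation of a plain Abel/Karlin argument delivers the conclusion; the passage from ``tails $\overline P_i$ are $TP_2$'' plus ``$A_k(t)$ is $TP_2$'' to ``$\sum_k p_i(k)A_k(t)$ is $TP_2$'' genuinely needs the Ortega-type lemma the paper cites, not just the basic composition formula. Your remark that the monotonicity of $\phi$ plays no visible role is reasonable---the paper does not use it explicitly either---but this is separate from the gap above.
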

\begin{proof}
Denote by $H_{N_i:N_i}(t)$ the distribution function of $X_{N_i:N_i}$ given as
\[
H_{N_i:N_i}(t)=\sum_{k=1}^{\infty}p^{i}_k\,F^k(t),\quad \hbox{for all $t>0,$}
\]
where $F(t)$ is the common cumulative distribution function of the $X_i$ and $p^i_k=P(N_i=k)$, $k\in \mathbb{N},$ is the probability mass function of $N_i,\ i=1,2.$ Clearly, $F^k(t)$ is the CDF of $X_{N:N}$ conditional on $N=k$. It is not hard to see that for all $t>0$ and for each $i=1,2$ one has
\[
\varphi(t,i)=\int_{0}^{t}\phi(x){H}_{N_i:N_i}(x){\rm d}x=\sum_{k=1}^{\infty}\eta(t,k)\rho(k,i),
\]
where $\eta(t,k)=\int_{0}^{t}\phi(x)F^k(x){\rm d}x,$ and $\rho(k,i)=p^i_k.$ Denote $\nu(k,i)=\sum_{j=k}^{\infty}p^i_j,$
for each $k\in \mathbb{N}$ and $i=1,2.$ Assumption $N_1\leq_{hr}N_2$ (inequality $\leq_{hr}$
stands for the hazard rate order between $N_1$ and $N_2$)
implies that $\nu(k,i)$ is $TP_2$ in $(k,i)\in \mathbb{N}\times \{1,2\}.$
On the other hand, $\eta(t,k)$ is $TP_2$ in $(t,k)\in \mathbb{R}^+\times \mathbb{N}.$
Applying Lemma 2.1 in Ortega [11] gives $\varphi(t,i)$ is $TP_2$ in $(t,i)\in \mathbb{R}^+\times \{1,2\},$
which is equivalent to say that $X_{N_1:N_1}\leq_{wmit}^\phi X_{N_2:N_2}$.
\end{proof}
\subsection{Renewal Theory}
Let us consider a renewal process with i.i.d.\ non-negative interarrival times $\{X_n\}_{n\in \mathbb{N}}$ having common
distribution function $F(t)$ and finite mean $\mu=\mathbb E[X_n]$. Let $S_n=\sum_{i=1}^{n}X_i$, $n\in \mathbb N$,
with $S_0\equiv 0$, be the time of the $k$th arrival. We define $N(t)=\max\{n: S_n\leq t\}$,
which represents the number of renewals during $(0,t]$.  The excess lifetime $\gamma(t)=S_{N(t)+1}-t$
at time $t\geq 0$ is the time elapsed from the time $t$ to the first arrival after $t.$
Recall that $\gamma(0)$ has distribution function $F,$ that is, $\gamma(0)\stackrel{d}{=}X_1.$ The expected number of renewals in $(0, t]$ can be obtained as
\begin{equation}\label{Mt}
M(t)=\mathbb{E}[N(t)]=F(t)+\int_{0}^{t}F(t-u)\,{\rm d}M(u).
\end{equation}
It is well-known that the CDF of $\gamma(t)$ is given as
\begin{equation}\label{gammat}
\mathbb{P}[\gamma(t)\leq x]=F(t+x)+\int_{0}^{t}F(t-u+x)\,{\rm d}M(u)-M(t),
\end{equation}
for all $x,t\geq0.$ In the literature, several results have been given to characterize the stochastic orders by the excess lifetime in a renewal process. For more details on definitions and properties, readers are referred to Barlow and Proschan \cite{Barlow}. Next, we will investigate the behavior of the excess lifetime of a renewal process with WMIT interarrivals. We recall that the CDF of the residual lifetime (\ref{eq:residuallifetime}) is given by
\[
F_t(x)=P(X-t\leq x|X>t)=\frac{F(t+x)-F(t)}{1-F(t)},\quad t>0.
\]
Moreover, we say $X$ is new better than used (NBU) if $X_t\leq_{st}X$ for all $t>0,$ where $X_t$ is the residual lifetime defined in (\ref{eq:residuallifetime}).
\begin{theorem}
Let $X_t\leq_{wmit}^\phi X$ for all $t>0.$ If $X$ is IWMIT and is NBU, then $\gamma(t)\leq_{wmit}^\phi \gamma(0)$ for all $t>0.$
\end{theorem}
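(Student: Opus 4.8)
The plan is to represent the excess lifetime $\gamma(t)$ as a probability mixture of residual lifetimes, and then to exploit the fact that the WMIT order is well behaved under such mixtures. Recalling that $\gamma(0)\stackrel{d}{=}X$, by Definition~\ref{def:WMIT} the assertion $\gamma(t)\leq_{wmit}^\phi\gamma(0)$ is just $\widetilde{\mu}_{\psi(\gamma(t))}(s)\geq\widetilde{\mu}_{\psi(X)}(s)$ for all $s>0$. To produce the mixture, I would eliminate $M(t)$ from (\ref{gammat}) by means of the renewal equation (\ref{Mt}) and write $\overline{F}(t-u+x)=\overline{F}(t-u)\,\overline{F}_{t-u}(x)$, where $\overline{F}_a(x)=\overline{F}(a+x)/\overline{F}(a)$ is the survival function of the residual lifetime $X_a=[X-a\mid X>a]$ of (\ref{eq:residuallifetime}); this gives
$$ \mathbb{P}[\gamma(t)>x]=\overline{F}(t)\,\overline{F}_{t}(x)+\int_{0}^{t}\overline{F}(t-u)\,\overline{F}_{t-u}(x)\,{\rm d}M(u),\qquad x\geq 0. $$
Evaluating at $x=0$ (where the left-hand side equals $1$) and using (\ref{Mt}) once more shows that $\overline{F}(t)+\int_{0}^{t}\overline{F}(t-u)\,{\rm d}M(u)=1$, so that $\gamma(t)$ is distributed as a mixture of the family $\{X_a:a\in(0,t]\}$ with respect to a \emph{probability} measure $\Pi_t$ on $(0,t]$ (an atom of mass $\overline{F}(t)$ at $a=t$, together with the push-forward of the finite measure $\overline{F}(t-u)\,{\rm d}M(u)$ under $u\mapsto a=t-u$).

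The second ingredient is the elementary closure property that $\leq_{wmit}^\phi$ is preserved under mixtures on the smaller side: if $\{Y_\theta\}_\theta$ is a family of non-negative random variables with $Y_\theta\leq_{wmit}^\phi Z$ for every $\theta$, and $Y$ has CDF $F_Y=\int F_{Y_\theta}\,{\rm d}\Pi(\theta)$ for some probability measure $\Pi$, then $Y\leq_{wmit}^\phi Z$. Indeed, since $\int_{0}^{s}\phi(x)F(x)\,{\rm d}x$ is linear in $F$, formula (\ref{wmit}) yields $\widetilde{\mu}_{\psi(Y)}(s)=\frac{\int F_{Y_\theta}(s)\,\widetilde{\mu}_{\psi(Y_\theta)}(s)\,{\rm d}\Pi(\theta)}{\int F_{Y_\theta}(s)\,{\rm d}\Pi(\theta)}$, i.e.\ $\widetilde{\mu}_{\psi(Y)}(s)$ is a non-negative weighted average of the component functions $\widetilde{\mu}_{\psi(Y_\theta)}(s)$; by hypothesis each of the latter is $\geq\widetilde{\mu}_{\psi(Z)}(s)$, hence so is the average. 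Applying this with $Y=\gamma(t)$, $Z\stackrel{d}{=}X\stackrel{d}{=}\gamma(0)$, $\{Y_\theta\}=\{X_a:a\in(0,t]\}$ and $\Pi=\Pi_t$, and invoking the standing hypothesis $X_a\leq_{wmit}^\phi X$ for all $a>0$, we obtain $\widetilde{\mu}_{\psi(\gamma(t))}(s)\geq\widetilde{\mu}_{\psi(X)}(s)$ for all $s>0$, that is, $\gamma(t)\leq_{wmit}^\phi\gamma(0)$.

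Within this route the assumptions that $X$ is IWMIT and NBU act essentially as auxiliary structure — NBU already delivers the companion comparison $\gamma(t)\leq_{st}\gamma(0)$, and I would expect the full strength of IWMIT and NBU to be what is needed either for an alternative proof using the total-positivity machinery employed elsewhere in Section~6, or for the sharper monotone statement $\gamma(t)\leq_{wmit}^\phi\gamma(s)$ for $0\leq s\leq t$ (obtained by comparing the mixing measures $\Pi_t$ and $\Pi_s$). The step I expect to be the main obstacle is the careful passage from (\ref{gammat})–(\ref{Mt}) to the genuine probability-mixture representation: in particular, verifying that $\Pi_t$ has total mass $1$, and handling the degenerate case in which $X$ has bounded support so that $\overline{F}(t)=0$ for large $t$ (then the atom at $a=t$ disappears and $\Pi_t$ concentrates on those ages $a$ with $\overline{F}(a)>0$, but the weighted-average argument is unchanged). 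Once the mixture representation and the closure property are in hand, the conclusion is immediate from Theorem~\ref{equivalent:wmrlthm} and Definition~\ref{def:WMIT}.
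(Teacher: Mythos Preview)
Your argument is correct and amounts to a cleaner, more conceptual rephrasing of the paper's own proof. The paper works directly with the integrals: it subtracts (\ref{Mt}) from (\ref{gammat}) to obtain $\mathbb{P}[\gamma(t)\leq x]=[F(t+x)-F(t)]+\int_{0}^{t}[F(t-u+x)-F(t-u)]\,{\rm d}M(u)$, multiplies by $\phi(x)$, integrates over $[0,s]$, and then applies the inequality $\int_{0}^{s}\phi(x)F_{a}(x)\,{\rm d}x\geq F_{a}(s)\,\widetilde\mu_{\psi(X)}(s)$ (which is exactly $X_{a}\leq_{wmit}^{\phi}X$) term by term to reach $\widetilde\mu_{\psi(\gamma(t))}(s)\geq\widetilde\mu_{\psi(X)}(s)$. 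Once one writes $F(a+x)-F(a)=\overline F(a)\,F_{a}(x)$, the paper's chain of inequalities \emph{is} your weighted-average computation, with the weights $\overline F(t-u)\,{\rm d}M(u)$ (together with the atom $\overline F(t)$) playing the role of your $\Pi_{t}$. What your route buys is a reusable one-sided closure lemma for $\leq_{wmit}^{\phi}$ under mixtures; what the paper's buys is self-containedness without isolating a separate lemma. Your remark that the hypotheses IWMIT and NBU are not actually invoked is on target: the paper's own chain of inequalities uses only $X_{t}\leq_{wmit}^{\phi}X$, so these two assumptions appear to be vestigial here (likely carried over from the analogous MIT-order results in the literature).
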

\begin{proof}
Since $X_t\leq_{wmit}^\phi X$ for all $t>0,$ it follows that
$$
\int_{0}^{s}\phi(x)[F(t+x)-F(t)]\,{\rm d}x\geq [F(t+s)-F(t)]\int_{0}^{s}\phi(x)\frac{F(x)}{F(s)}\,{\rm d}x,
$$
for all $s>0.$ By (\ref{Mt}) and (\ref{gammat}), we have that
\begin{eqnarray*}
  &&   \hspace{-0.5cm}\int_{0}^{s}\phi(x)\mathbb{P}[\gamma(t)\leq x]\,{\rm d}x \\
  &=&\int_{0}^{s}\phi(x)[F(t+x)-F(t)]\,{\rm d}x+\int_{0}^{s}\int_{0}^{t}\phi(x)[F(t-u+x)-F(t-u)]{\rm d}M(u)\,{\rm d}x \\
  &=& \int_{0}^{s}\phi(x)[F(t+x)-F(t)]\,{\rm d}x+\int_{0}^{t}\int_{0}^{s}\phi(x)[F(t-u+x)-F(t-u)]\,{\rm d}x\,{\rm d}M(u) \\
  &\geq&\int_{0}^{s}\phi(x)[F(t+x)-F(t)]\,{\rm d}x+\int_{0}^{t}[F(t-u+s)-F(t-u)]\int_{0}^{s}\phi(x)\frac{F(x)}{F(s)}\,{\rm d}x\,{\rm d}M(u)\\
  &=&\int_{0}^{s}\phi(x)[F(t+x)-F(t)]\,{\rm d}x+
  \int_{0}^{s}\phi(x)\frac{F(x)}{F(s)}\,{\rm d}x\int_{0}^{t}\left[F(t-u+s)-F(t-u)\right]\,{\rm d}M(u)\\
  &=&\int_{0}^{s}\phi(x)[F(t+x)-F(t)]\,{\rm d}x+
  \int_{0}^{s}\phi(x)\frac{F(x)}{F(s)}\,{\rm d}x\left[P(\gamma(t)\leq s)-F(t+s)+F(t)\right]\\
  &\geq&[F(t+x)-F(t)]\int_{0}^{s}\phi(x)\frac{F(x)}{F(s)}\,{\rm d}x+
  \int_{0}^{s}\phi(x)\frac{F(x)}{F(s)}\,{\rm d}x\left[P(\gamma(t)\leq s)-F(t+s)+F(t)\right]\\
  &=&\int_{0}^{s}\phi(x)\frac{F(x)}{F(s)}{\rm d}x\,\mathbb{P}[\gamma(t)\leq s].
\end{eqnarray*}
Hence, it holds that for all $t,s\geq0$,
\[
\int_{0}^{s}\phi(x)\frac{\mathbb{P}[\gamma(t)\leq x]}{\mathbb{P}[\gamma(t)\leq s]}\,{\rm d}x\geq \int_{0}^{s}\phi(x)\frac{F(x)}{F(s)}\,{\rm d}x,
\]
which means that $\gamma(t)\leq_{wmit}^\phi \gamma(0)$ for all $t>0.$
\end{proof}

\section{Concluding remarks}

It is of interest for the industry to perform systematic studies using reliability concepts in view of economic repercussions and safety issues.
Due to the existence of a great number of scenarios, a statistical comparison of reliability measures is desired in several applied contexts,
such as reliability engineering and biomedical fields.
For this reason, we have introduced a  stochastic order based on the MIT function, named  weighted mean inactivity time (WMIT) order,
which is dual to the weighted mean residual life order. The relationship of this new order with other well-known stochastic orders has been discussed.
It was shown that the WMIT order lies in the framework of the RHR and the MIT orders under suitable conditions, and hence it enjoys several
useful properties which can be applied in reliability and survival analysis. Moreover, we also discussed its monotonicity properties.
Further, we used the WMIT to determine the expressions for the variance of transformed random variable as well as the weighted GCE.
Among the several results on such measures, we provided some characterizations and preservation properties of the new order
under shock models, random maxima and renewal theory. Our results provide new concepts and applications in reliability, statistics, and risk  theory.
\par
Further properties and applications of the new stochastic order and the new proposed class will be the
object of future investigations. For example, the result of this paper can be extended to the doubly truncated (interval) random variables. Specifically, given the random lifetime $X$ and the cumulative weighted random variable $\psi(X),$ one can consider 
$$
 [\psi(X)- \psi(t_1)|t_1\leq X \leq t_2]
 \qquad  \hbox{and} \qquad 
 [\psi(t_2) - \psi(X)|t_1\leq X \leq t_2]
$$ 
where $(t_1, t_2) \in D^{\star}=\{(t_1, t_2) : F(t_1)<F(t_2)\}.$ Given that the lifetime having age $t_1$ will expire before age $t_2$, 
the first random variable is related to the remaining lifetime, whereas the second one is related to the inactivity time  
(see e.g.\ Sankaran and Sunoj \cite{Sankaran}, Khorashadizadeh \emph{et al.}\ \cite{Khorashadizadeh} and  
references therein).

%
\end{document}